\newtheorem{Theorem}{Theorem}
\newtheorem{Corollary}{Corollary}
\newtheorem{Definition}{Definition}
\newtheorem{Example}{Example}
\newtheorem{Lemma}{Lemma}
\newtheorem{Proposition}{Proposition}
\newtheorem{Remark}{Remark}
\def\Ric{\operatorname{Ric}}
\def\tr{{\rm\,trace\,}}
\def\Div{\operatorname{div}}
\def\vol{\operatorname{vol}}
\title{Geometry of weak metric $f$-manifolds: a survey}
\author{Vladimir Rovenski
\footnote{Department of Mathematics, University of Haifa, Mount Carmel, 3498838 Haifa, Israel
\newline e-mail: {\tt vrovenski@univ.haifa.ac.il}
}
}
\begin{document}

\date{}

\maketitle

\begin{abstract}
A weak $f$-structure on a smooth manifold, introduced by the author and R. Wolak (2022), generalizes K. Yano's (1961) $f$-structure.
This generalization allows us to revisit classical theory and discover new applications related to Killing vector fields, totally geodesic foliations, Ricci-type solitons, and Einstein-type metrics.
This article reviews the results on weak metric $f$-manifolds, where the complex structure on the contact distribution of a metric $f$-structure is replaced with a nonsingular skew-symmetric tensor, and explores its distinguished classes.

\vskip1.mm\noindent
\textbf{Keywords}: Metric $f$-structure,
Killing vector field, totally geodesic foliation, $\eta$-Einstein metric,
almost ${\cal S}$-structure,
almost ${\cal C}$-structure,
$\eta$-Ricci soliton,
$f$-{K}-manifold,
$\beta$-Kenmotsu $f$-manifold.

\vskip1.mm
\noindent
\textbf{Mathematics Subject Classifications (2010)} 53C15, 53C25, 53D15
\end{abstract}


\section{Introduction}

Contact geometry has garnered increasing interest due to its significant role in theoretical physics.
An important class of contact metric manifolds are ${K}$-contact manifolds (thus the structural vector field is Killing) with two subclasses: Sasakian and cosymplectic manifolds.
Every cosymplectic manifold is locally the product of
$\mathbb{R}$
and a K\"{a}hler manifold.
A~Riemannian manifold $(M^{2n+1},g)$ with contact 1-form $\eta$ is Sasakian if its Riemannian cone $M\times\mathbb{R}^{>0}$ with metric $t^2 g+dt^2$ is a K\"{a}hler manifold.
Recent research has been driven by the intriguing question of how Ricci solitons
-- self-similar solutions of the Ricci flow equation
--
can be significant for the geometry of contact metric manifolds.
Some studies have explored conditions under which a~contact metric manifold equipped with a Ricci-type soliton structure carries a canonical metric,
such as
an Einstein-type metric, e.g.,~\cite{G-D-2020,G-2023,Mikes-2016,Mikes-Hui-2016}.

An~$f$-structure introduced by K.\,Yano on a smooth manifold $M^{2n+s}$ serves as a higher-dimensio\-nal analog
of {almost complex structures} ($s=0$) and {almost contact structures} ($s=1$).
This structure is defined by a (1,1)-tensor field $f$ of rank $2n$ such that $f^3 + f = 0$, see \cite{fip,yan}.
The tangent bundle splits into two complementary subbundles:
 $TM=f(TM)\oplus\ker f$.
The~restriction of $f$ to the $2n$-dimensional distribution $f(TM)$ defines a {complex structure}.
The existence of the $f$-structure on $M^{2n+s}$ is equivalent to a reduction of the
structure group to $U(n)\times O(s)$, see~\cite{b1970}.
A submanifold $M$ of an almost complex manifold $(\bar M,J)$ that satisfies the condition $\dim(T_xM\cap J(T_xM))=const>0$
naturally possesses an $f$-structure, see~\cite{L-1969}.
An $f$-structure is a~special case of an {\em almost product structure}, defined by two complementary orthogonal distributions of a Riemannian mani\-fold $(M,g)$, with Naveira's 36 distinguished classes, see \cite{nav-1983}. Foliations appear when one or both distributions are involutive.
An~interesting case occurs when the subbundle $\ker f$ is parallelizable, leading to a framed $f$-structure
for which the reduced structure group is $U(n)\times {\rm id}_s$.
In this scenario, there exist vector fields $\{\xi_i\}_{1\le i\le s}$ spanning $\ker f$
with dual 1-forms $\{\eta^i\}_{1\le i\le s}$, satisfying ${f}^2 = -{\rm id} + \sum\nolimits_{\,i}{\eta^i}\otimes {\xi_i}$.
A~compatible metric exists on any framed $f$-manifold, e.g., \cite{b1970}:
$g({f}X,{f}Y)=g(X,Y)-\sum\nolimits_{\,i}{\eta^i}(X)\,{\eta^i}(Y)$,
and we~obtain the {metric $f$-structure},
see~\cite{BP-2016,CFF-1990,fip,tpw-2014,yan,YK-1985}.
Geometers have introduced various broad classes of metric $f$-structures.
A notable class is {Kenmotsu $f$-manifolds}, see~\cite{SV-2016} (Kenmotsu manifolds when $s=1$, see \cite{kenmotsu1972class}), characterized in terms of warped products of $\mathbb{R}^s$ and a K\"{a}hler manifold.
 A~metric $f$-manifold is termed a {${K}$-manifold} if it is normal and $d \Phi=0$, where $\Phi(X,Y)=g(X,{f} Y)$.
Two important subclasses of ${K}$-manifolds are {${\cal C}$-manifolds} if $d\eta^i=0$ and {${\cal S}$-manifolds} if $d\eta^i = \Phi$ for any~$i$,
see \cite{b1970}.
Omitting the normality condition, we get almost ${K}$-manifolds, almost ${\cal S}$-manifolds and almost ${\cal C}$-manifolds, respectively.
An~$f$-{K}-contact manifold
is a normal metric $f$-manifold, whose structure vector fields are unit Killing vector fields;
the structure is intermediate between metric $f$-structure and $S$-structure,
see \cite{Goertsches-2,Di-T-2006}.
The~distribution $\ker f$ of a ${K}$-manifold is tangent to a $\mathfrak{g}$-foliation with flat totally geodesic leaves.
Note that there are no Einstein metrics on $f$-{K}-contact~manifolds.

The Extrinsic Geometry is concerned with properties of submanifolds (as being totally geodesic) that depend on the second fundamental form,
which, roughly speaking, describes how the subma\-nifolds are located in the ambient Riemannian manifold.
The Extrinsic Geometry of foliations (i.e., involutive distributions) is a field of Riemannian geometry, which studies the properties expressed by the second fundamental tensor of the leaves. Although the Riemann tensor belongs to the intrinsic geometry, a special component called mixed sectional curvature is a part of Extrinsic Geometry of foliations.
Totally geodesic foliations have simple extrinsic geometry and appear on manifolds with degenerate tensor fields, see~\cite{Rov-Wa-2021}.
A~key problem posed in this context in \cite{fip} is to identify suitable structures on manifolds that lead to totally geodesic foliations.


In \cite{RWo-2}, we initiated the study of weak $f$-structures on a smooth $(2n+s)$-dimensional mani\-fold
(that is, the linear complex structure on the subbundle ${\cal D}=f(TM)$ of a metric $f$-structure is replaced with a nonsingular skew-symmetric tensor).
These generalize the metric $f$-structure (the weak almost contact metric structure for $s=1$, see
\cite{rst-55}) and its satellites, allow us to look at classical theory in a new way and find new applications of Killing vector fields, totally geodesic foliations, Einstein-type metrics and Ricci-type~solitons.

\smallskip

The article reviews the results of our works \cite{rst-43,Rov-splitting,rov-127,rst-57,Rov-Wa-2021,RWo-2} on the geometry of weak metric $f$-manifolds and their distinguished classes. It is organized as follows:
Section~\ref{sec:01} (following the Introduction) presents the basics of weak metric $f$-manifolds and introduces their important subclasses.
It also investigates the normality condition and derives the covariant derivative of $f$ with a new tensor ${\cal N}^{\,(5)}$
and shows that the distribution ${\cal D}^\bot$ of a weak almost ${\cal S}$-manifold and a weak almost ${\cal C}$-manifold
is tangent to a $\mathfrak{g}$-foliation with an abelian Lie~algebra.
Section~\ref{sec:03a} presents basics of weak almost ${\cal S}$-manifolds and shows that these manifolds are endowed with totally geodesic foliations.
Section~\ref{sec:3cs} discussed weak ${\cal C}$-structures
and shows that the weak metric $f$-structure is a weak ${\cal S}$-structure if and only if it is an ${\cal S}$-structure.
Section~\ref{sec:02} characterizes
weak $f$-{K}-contact manifolds among all weak almost ${\cal S}$-manifolds by the property ${f}=-\nabla{\xi_i}\ (1\le i\le s)$ known for $f$-{K}-contact manifolds, and presents sufficient conditions
under which a Riemannian mani\-fold endowed with a set of unit Killing vector fields is a weak $f$-{K}-contact manifold.
We find the Ricci curvature of a weak $f$-{K}-contact manifold in the directions $\ker f$ and show that the mixed sectional curvature is positive.
Using this, the weak $f$-{K}-contact structure can be  deformed to the $f$-{K}-contact structure
and obtain a topological obstruction (including the Adams number) to the existence of weak $f$-K-contact manifolds.
Then we show that there are no Einstein weak $f$-{K}-contact manifolds with $s>1$.
%
Section~\ref{sec:04} presents
sufficient conditions for a weak $f$-{K}-contact manifold with a generalized gradient Ricci soliton to be a quasi Einstein or Ricci flat manifold.
%
In~Section~\ref{sec:05}, we find sufficient conditions for
a compact weak $f$-{\rm K}-contact manifold with the $\eta$-Ricci structure of constant scalar curvature to be~$\eta$-Einstein.
Sections~\ref{sec:02-f-beta} and \ref{sec:03-f-beta} show that a weak $\beta$-Kenmotsu $f$-manifold is locally a twisted product of $\mathbb{R}^s$
and a weak K\"{a}hler manifold, and in the case of an additional $\eta$-Ricci soliton structure,
explore their potential to be $\eta$-Einstein~manifolds of constant scalar curvature.
%
The proofs of some results given in the article for the convenience of the reader use properties of the new tensors, as well as constructions needed in the classical case.

\section{Preliminaries}
\label{sec:01}

In this section, we review the basics of the weak metric $f$-structure,
see \cite{rst-43,RWo-2}.
First, let us gene\-ralize the notion of a framed $f$-structure
\cite{BP-2016,CFF-1990,gy-1970,tpw-2014,yan,YK-1985}
called
an $f$-structure with complemented frames in~\cite{b1970}
or, an $f$-structure with parallelizable kernel
in \cite{fip}.

\begin{Definition}\rm
A~\textit{framed weak $f$-structure} on a smooth manifold $M^{2n+s}\ (n,s>0)$ is a set $({f},Q,{\xi_i},{\eta^i})$, where
${f}$ is a $(1,1)$-tensor of rank $2\,n$, $Q$ is a nonsingular $(1,1)$-tensor,
${\xi_i}\ (1\le i\le s)$ are structure vector fields and ${\eta^i}\ (1\le i\le s)$ are 1-forms, satisfying
\begin{align}\label{2.1}
 {f}^2 = -Q +
 \sum\nolimits_{\,i}
 {\eta^i}\otimes {\xi_i},\quad
 {\eta^i}({\xi_j})=\delta^i_j,\quad
 Q\,{\xi_i} = {\xi_i}.
\end{align}
Then equality $f^3+fQ=0$ holds.
 If there exists a Riemannian metric $g$ on $M^{2n+s}$
 such~that
\begin{align}\label{2.2}
 g({f} X,{f} Y)= g(X,Q\,Y) -\sum\nolimits_{\,i}{\eta^i}(X)\,{\eta^i}(Y),\quad X,Y\in\mathfrak{X}_M,
\end{align}
then $({f},Q,{\xi_i},{\eta^i},g)$ is a {\it weak metric $f$-structure},
 and
$g$ is called a \textit{compatible} metric.
\end{Definition}

Assume that
a $2\,n$-dimensional contact distribution ${\cal D}=\bigcap_{\,i}\ker{\eta^i}$ is ${f}$-invariant.
Note that for the framed weak $f$-structure,
${\cal D}=f(TM)$ is true, and
\begin{align}\label{Eq-f-01}
 {f}\,{\xi_i}=0,\quad {\eta^i}\circ{f}=0,\quad \eta^i\circ Q=\eta^i,\quad [Q,\,{f}]=0 .
\end{align}
By the above,
the distribution ${\cal D}^\bot=\ker f$ is spanned by $\{\xi_1,\ldots,\xi_s\}$ and is invariant for $Q$.

\begin{Remark}\rm
The concept of an almost paracontact structure is analogous to the concept of an almost contact structure and is closely related to an almost product structure. A para-$f$-structure with $s=2$ arises on hypersurfaces in almost contact manifolds.
Similarly to \eqref{2.1}, we define a \textit{framed weak para}-$f$-\textit{structure} on $M^{2n+s}$, see more details in \cite{rov-121}, by
\[
 f^2 = Q -\sum\nolimits_{\,i}\eta^i\otimes\xi_i,\quad
 {\eta^i}({\xi_j})=\delta^i_j,\quad
 Q\,{\xi_i} = {\xi_i},
\]
and
assume that a $2\,n$-dimensional contact distribution ${\cal D}=\bigcap_{\,i}\ker{\eta^i}$ is ${f}$-invariant.
\end{Remark}

The framed weak $f$-structure
is called {\it normal} if the following tensor is~zero:
\begin{align}\label{2.6X}
 {\cal N}^{\,(1)} = [{f},{f}] + 2\sum\nolimits_{\,i}d{\eta^i}\otimes{\xi_i}.
\end{align}
The Nijenhuis torsion
of a (1,1)-tensor ${S}$ and the exterior derivative of a 1-form ${\omega}$ are given~by
\begin{align*}
 & [{S},{S}](X,Y) = {S}^2 [X,Y] + [{S} X, {S} Y] - {S}[{S} X,Y] - {S}[X,{S} Y],\quad X,Y\in\mathfrak{X}_M, \\
 & d\omega(X,Y) = \frac12\,\{X({\omega}(Y)) - Y({\omega}(X)) - {\omega}([X,Y])\},\quad X,Y\in\mathfrak{X}_M.
\end{align*}
Using the Levi-Civita connection $\nabla$ of $g$, one can rewrite $[S,S]$ as
\begin{align}\label{4.NN}
 [{S},{S}](X,Y) = ({S}\nabla_Y{S} - \nabla_{{S} Y}{S}) X - ({S}\nabla_X{S} - \nabla_{{S} X}{S}) Y .
\end{align}
The following tensors
${\cal N}^{\,(2)}_i, {\cal N}^{\,(3)}_i$ and ${\cal N}^{\,(4)}_{ij}$
on framed weak $f$-manifolds, see \cite{rst-43, rov-127}, are well known in the classical theory, see \cite{b1970}:
\begin{align*}
 {\cal N}^{\,(2)}_i(X,Y) &= (\pounds_{{f} X}\,{\eta^i})(Y) - (\pounds_{{f} Y}\,{\eta^i})(X)
 =2\,d{\eta^i}({f} X,Y) - 2\,d{\eta^i}({f} Y,X) ,  \\
 {\cal N}^{\,(3)}_i(X) &= (\pounds_{{\xi_i}}{f})X
 = [{\xi_i}, {f} X] - {f} [{\xi_i}, X],\\
 {\cal N}^{\,(4)}_{ij}(X) &= (\pounds_{{\xi_i}}\,{\eta^j})(X)
 = {\xi_i}({\eta^j}(X)) - {\eta^j}([{\xi_i}, X])
 = 2\,d{\eta^j}({\xi_i}, X) .
\end{align*}

\begin{Remark}
\rm
Let $M^{2n+s}(f,Q,\xi_i,\eta^i)$ be a framed weak $f$-manifold.
Consider the product manifold $\bar M = M^{2n+s}\times\mathbb{R}^s$,
where $\mathbb{R}^s$ is a Euclidean space with a basis $\partial_1,\ldots,\partial_s$,
and define tensors $J$ and $\bar Q$ on $\bar M$ putting
 $J(X, \sum\nolimits_{\,i}a^i\partial_i) = (fX - \sum\nolimits_{\,i}a^i\xi_i,\, \sum\nolimits_{\,j}\eta^j(X)\partial_j)$
 and
 $\bar Q(X, \sum\nolimits_{\,i}a^i\partial_i) = (QX,\, \sum\nolimits_{\,i}a^i\partial_i)$
for $a_i\in C^\infty(M)$.
It can be shown that $J^{\,2}=-\bar Q$.
 The~tensors ${\cal N}^{\,(2)}_i, {\cal N}^{\,(3)}_i, {\cal N}^{\,(4)}_{ij}$ appear when~we derive the integrability condition $[J, J]=0$
and express the normality condition ${\cal N}^{\,(1)}=0$ for~$(f,Q,\xi_i,\eta^i)$.
\end{Remark}

A~fra\-med weak $f$-manifold admits a compatible metric if ${f}$
has a skew-sym\-metric representation, i.e., for any $x\in M$ there exist a~frame $\{e_i\}$ on a neighborhood $U_x\subset M$,
for which ${f}$ has a skew-symmetric matrix, see \cite{RWo-2}.
For a weak metric $f$-manifold, the tensor ${f}$ is skew-symmetric and $Q$ is self-adjoint and positive definite.
Putting $Y={\xi_i}$ in \eqref{2.2},
and using $Q\,{\xi_i}={\xi_i}$,
we get ${\eta^i}(X)=g(X,{\xi_i})$.
Hence, ${\xi_i}$ is orthogonal to ${\cal D}$ for any compatible metric.
Thus, $TM={\cal D}\oplus{\cal D}^\bot$ -- the sum of two complementary orthogonal subbundles.

\smallskip

A~distribution $\widetilde{\cal D}\subset TM$
is called {totally geodesic} if and only if
its second fundamental form vanishes, i.e., $\nabla_X Y+\nabla_Y X\in\widetilde{\cal D}$ for any vector fields $X,Y\in\widetilde{\cal D}$ --
this is the case when {any geodesic of $M$ that is tangent to $\widetilde{\cal D}$ at one point is tangent to $\widetilde{\cal D}$ at all its points},
e.g., \cite[Section~1.3.1]{Rov-Wa-2021}.
According to the Frobenius theorem, any involutive distribution
is tangent to (the leaves of) a foliation.
Any involutive and totally geodesic distribution is tangent to a totally geodesic~foliation.
A~foliation whose orthogonal distribution is totally geodesic is called a Riemannian foliation.


A ``small" (1,1)-tensor $\widetilde{Q} = Q - {\rm id}_{\,TM}$ measures the difference between weak and classical $f$-structures.
By \eqref{Eq-f-01}, we obtain
\[
 [\widetilde{Q},{f}]=0,\quad \widetilde{Q}\,{\xi_i}=0,\quad \eta^i\circ\widetilde{Q}=0.
\]

\begin{Proposition}[see \cite{rst-43}]\label{thm6.1}
The normality condition for a weak metric $f$-structure implies
\begin{align}\label{Eq-normal}
 & {\cal N}^{\,(3)}_i = {\cal N}^{\,(4)}_{ij} = 0,\quad {\cal N}^{\,(2)}_i(X,Y) = \eta^i([\widetilde QX, fY]), \\
\label{Eq-normal-2}
 & \nabla_{\xi_i}\,\xi_j\in{\cal D},\quad  [X,\xi_i]\in{\cal D}\quad
 (X\in{\cal D}).
\end{align}
Moreover, ${\cal D}^\bot$ is a totally geodesic distribution.
\end{Proposition}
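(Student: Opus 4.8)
The plan is to start from the normality tensor $\mathcal{N}^{\,(1)}$ of \eqref{2.6X} and feed it the structure vector fields, exploiting the algebraic identities ${f}\,{\xi_i}=0$, ${\eta^i}\circ{f}=0$ from \eqref{Eq-f-01} to peel off the stated consequences one at a time. First I would compute $\mathcal{N}^{\,(1)}({\xi_i},X)$ for arbitrary $X\in\mathfrak{X}_M$. Since ${f}\,{\xi_i}=0$, the Nijenhuis-torsion part $[{f},{f}]({\xi_i},X)$ collapses to $-{f}[{\xi_i},{f}X]+{f}^2[{\xi_i},X] = -{f}\,\mathcal{N}^{\,(3)}_i(X) + \big({-}Q+\sum_j\eta^j\otimes\xi_j\big)[{\xi_i},X]$, after using ${f}^2=-Q+\sum_j\eta^j\otimes\xi_j$ from \eqref{2.1} and the definition of $\mathcal{N}^{\,(3)}_i$. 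The remaining piece $2\sum_j d\eta^j({\xi_i},X)\,\xi_j = \sum_j\mathcal{N}^{\,(4)}_{ij}(X)\,\xi_j$ lands entirely in ${\cal D}^\bot$. Applying $\eta^k$ to the vanishing combination isolates $\mathcal{N}^{\,(4)}_{ik}$ (because $\eta^k\circ{f}=0$, $\eta^k\circ Q=\eta^k$), giving $\mathcal{N}^{\,(4)}_{ij}=0$; feeding that back shows ${f}\,\mathcal{N}^{\,(3)}_i=0$, and then a second contraction (or applying $\mathcal{N}^{\,(1)}$ to $({\xi_i},{f}X)$) together with ${f}$ being of rank $2n$ on ${\cal D}$ forces $\mathcal{N}^{\,(3)}_i=0$ as well. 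The formula $\mathcal{N}^{\,(2)}_i(X,Y)=\eta^i([\widetilde{Q}X,{f}Y])$ then comes from evaluating $\mathcal{N}^{\,(1)}$ on ${\cal D}$ and comparing with the classical identity $\mathcal{N}^{\,(2)}_i = \eta^i\circ[{f},{f}]$-type expansion, where the ``small'' tensor $\widetilde{Q}=Q-{\rm id}$ (with $\widetilde{Q}{\xi_i}=0$, $\eta^i\circ\widetilde{Q}=0$) is exactly the discrepancy from the $Q={\rm id}$ case.

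Next I would derive \eqref{Eq-normal-2}. From $\mathcal{N}^{\,(3)}_i=0$ we have $[{\xi_i},{f}X]={f}[{\xi_i},X]$ for all $X$, and from $\mathcal{N}^{\,(4)}_{ij}=0$ we get $d\eta^j({\xi_i},X)=0$, i.e. $\eta^j([{\xi_i},X])={\xi_i}(\eta^j(X))$. Taking $X\in{\cal D}$ (so all $\eta^j(X)=0$) yields $\eta^j([{\xi_i},X])=0$ for every $j$, which is precisely $[X,{\xi_i}]\in{\cal D}$. For $\nabla_{\xi_i}\xi_j\in{\cal D}$, I would use compatibility of $g$: since each ${\xi_j}$ is a unit vector orthogonal to the others is not assumed here, but one still has $g(\nabla_{\xi_i}\xi_j,\xi_k) = \xi_i\,g(\xi_j,\xi_k) - g(\xi_j,\nabla_{\xi_i}\xi_k)$; symmetrizing in $j,k$ gives $g(\nabla_{\xi_i}\xi_j,\xi_k)+g(\nabla_{\xi_i}\xi_k,\xi_j)=\xi_i\,g(\xi_j,\xi_k)$, and the antisymmetric part is controlled by $\eta^k([{\xi_i},\xi_j])=\mathcal{N}^{\,(4)}_{ij}(\xi_j)$-type terms which vanish — so one checks that $\eta^k(\nabla_{\xi_i}\xi_j)=0$ for all $k$ by combining the Koszul formula with $\mathcal{N}^{\,(4)}_{ij}=0$ and $\mathcal{N}^{\,(3)}_i=0$. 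Alternatively, and more cleanly, $g(\nabla_{\xi_i}\xi_j,\xi_k) = -g(\xi_j,\nabla_{\xi_i}\xi_k)$ after using $\mathcal{N}^{\,(4)}$-vanishing to kill the bracket contributions, so this quantity is antisymmetric in $j\leftrightarrow k$ yet also equals $\tfrac12 \xi_i g(\xi_j,\xi_k)$-type symmetric expressions, forcing it to zero.

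Finally, to conclude that ${\cal D}^\bot$ is totally geodesic, I would show $g(\nabla_{\xi_i}\xi_j + \nabla_{\xi_j}\xi_i,\,X)=0$ for every $X\in{\cal D}$. Write $g(\nabla_{\xi_i}\xi_j,X) = -g(\xi_j,\nabla_{\xi_i}X) = -g(\xi_j,\nabla_X\xi_i) - g(\xi_j,[{\xi_i},X])$; the last term vanishes by $[{\xi_i},X]\in{\cal D}\perp\xi_j$, and $g(\xi_j,\nabla_X\xi_i) = X g(\xi_j,\xi_i) - g(\nabla_X\xi_j,\xi_i)$. Symmetrizing over $i\leftrightarrow j$ the derivative-of-metric terms and the skew terms recombine so that $g(\nabla_{\xi_i}\xi_j+\nabla_{\xi_j}\xi_i,X)$ reduces to an expression in $X(g(\xi_i,\xi_j))$ and brackets all of which are either tangent to ${\cal D}$ or killed by the vanishing Nijenhuis-type tensors; since $g(\xi_i,\xi_j)$ is constant along ${\cal D}$ (this itself follows from $[X,\xi_i]\in{\cal D}$ and compatibility), the whole thing is zero. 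Hence the second fundamental form of ${\cal D}^\bot$ vanishes. The main obstacle I anticipate is the bookkeeping in the last two steps: keeping straight which bracket terms are absorbed into ${\cal D}$ versus into ${\cal D}^\bot$, and making sure the ``small tensor'' $\widetilde{Q}$ corrections (absent in the classical proof) do not spoil the symmetrization — but since $\widetilde{Q}\,\xi_i=0$ and $\eta^i\circ\widetilde{Q}=0$, every $\widetilde{Q}$-term that could appear is contracted against a $\xi$ or an $\eta$ and drops out, so the classical argument goes through essentially verbatim once these cancellations are noted.
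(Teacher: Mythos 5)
Your overall plan (contract $\mathcal{N}^{\,(1)}=0$ with $\xi_i$, then with $\eta^k$ and $f$, and afterwards use metric compatibility together with $\mathcal{N}^{\,(3)}_i=\mathcal{N}^{\,(4)}_{ij}=0$ for the statements about $\nabla_{\xi_i}\xi_j$ and ${\cal D}^\bot$) is the natural one; the survey itself defers the proof to \cite{rst-43}, and this is the expected route. However, as written there are concrete defects. First, your expansion of the Nijenhuis term is wrong: since $f\xi_i=0$, one has $[f,f](\xi_i,X)=f^2[\xi_i,X]-f[\xi_i,fX]=-f\,\mathcal{N}^{\,(3)}_i(X)$ — the $f^2[\xi_i,X]$ term cancels, so your extra $\bigl(-Q+\sum_j\eta^j\otimes\xi_j\bigr)[\xi_i,X]$ is spurious. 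With your (incorrect) expression the claim ``feeding that back shows $f\,\mathcal{N}^{\,(3)}_i=0$'' does not follow; with the correct one it is immediate. Second, $f\,\mathcal{N}^{\,(3)}_i=0$ plus ``$f$ has rank $2n$'' only gives $\mathcal{N}^{\,(3)}_i(X)\in\ker f={\cal D}^\bot$; to kill the remaining component you must check $\eta^j(\mathcal{N}^{\,(3)}_i(X))=\eta^j([\xi_i,fX])=-\mathcal{N}^{\,(4)}_{ij}(fX)=0$, which your ``second contraction'' only gestures at. Third, the identity $\mathcal{N}^{\,(2)}_i(X,Y)=\eta^i([\widetilde QX,fY])$ — the genuinely new, $\widetilde Q$-dependent part of the proposition — is asserted but not derived: one needs to apply $\eta^i$ to $\mathcal{N}^{\,(1)}(X,Y)=0$ (using $\eta^i\circ f=0$, $\eta^i\circ Q=\eta^i$) to get $\eta^i([fX,fY])=-2\,d\eta^i(X,Y)$, then replace $X$ by $fX$, expand $f^2=-{\rm id}-\widetilde Q+\sum_j\eta^j\otimes\xi_j$, and use $\mathcal{N}^{\,(4)}_{ij}=0$ to discard the $\eta^i([\xi_j,fY])$ terms; ``comparing with the classical expansion'' is not a proof of this step.

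On the second half: you hedge that orthonormality of the $\xi_i$ ``is not assumed,'' but it is automatic — putting $Y=\xi_i$ in \eqref{2.2} gives $\eta^i=g(\cdot,\xi_i)$, hence $g(\xi_i,\xi_j)=\eta^j(\xi_i)=\delta_{ij}$. Once this is noted, $\nabla_{\xi_i}\xi_j\in{\cal D}$ follows cleanly from the Koszul formula: all $\xi_a g(\xi_b,\xi_c)$ terms vanish, and every bracket term is of the form $\eta^a([\xi_b,\xi_c])=2\,d\eta^a(\xi_b,\xi_c)=\mathcal{N}^{\,(4)}_{ba}(\xi_c)=0$. Your ``alternatively, more cleanly'' variant is not valid as stated: $g(\nabla_{\xi_i}\xi_j,\xi_k)=-g(\xi_j,\nabla_{\xi_i}\xi_k)$ is only antisymmetry in $j,k$, and antisymmetry alone does not force vanishing — you need a second symmetry (e.g.\ $\nabla_{\xi_i}\xi_j=\nabla_{\xi_j}\xi_i$, which follows since $[\xi_i,\xi_j]\in\ker f$ by $\mathcal{N}^{\,(3)}_i=0$ and has vanishing $\eta^k$-components by $\mathcal{N}^{\,(4)}=0$) or the Koszul computation above. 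Your final argument for total geodesy of ${\cal D}^\bot$ (using $[\xi_i,X]\in{\cal D}$ and constancy of $g(\xi_i,\xi_j)$) is correct. In short: right strategy, fixable computational slip in the key contraction, and two steps (the vanishing of the ${\cal D}^\bot$-part of $\mathcal{N}^{\,(3)}_i$ and the $\mathcal{N}^{\,(2)}_i$ formula) left genuinely unproved.
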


The coboundary formula for exterior derivative of a $2$-form $\Phi$ is
\begin{align}\label{E-3.3}
 d\Phi(X,Y,Z) &= \frac{1}{3}\big\{
 X\,\Phi(Y,Z) + Y\,\Phi(Z,X) + Z\,\Phi(X,Y) \notag\\
 &-\Phi([X,Y],Z) - \Phi([Z,X],Y) - \Phi([Y,Z],X) \big\}.
\end{align}
Note that $d\Phi(X,Y,Z)=-\Phi([X,Y],Z)$ for $X,Y\in{\cal D}^\bot$.
Therefore, for a weak metric $f$-structure, the distribution ${\cal D}^\bot$ is involutive if and only if $d\Phi=0$.

Only one new tensor ${\cal N}^{(5)}$ (vani\-shing at $\widetilde Q=0$), which supplements the sequence of well-known tensors
${\cal N}^{\,(1)},{\cal N}^{\,(2)}_i,{\cal N}^{\,(3)}_i,{\cal N}^{\,(4)}_{ij}$
is needed to study the weak $f$-contact structure.

\begin{Proposition}[see \cite{rst-43}]
\label{lem6.0}
For a weak metric $f$-structure
we get
\begin{align}\label{3.1-new}
\notag
 & 2\,g((\nabla_{X}{f})Y,Z) = 3\,d\Phi(X,{f} Y,{f} Z) - 3\, d\Phi(X,Y,Z) + g({\cal N}^{\,(1)}(Y,Z),{f} X)\notag\\
\notag
 & +\sum\nolimits_{\,i}\big({\cal N}^{\,(2)}_i(Y,Z)\,\eta^i(X) + 2\,d\eta^i({f} Y,X)\,\eta^i(Z) - 2\,d\eta^i({f} Z,X)\,\eta^i(Y)\big) \notag\\
 & + {\cal N}^{\,(5)}(X,Y,Z),
\end{align}
where a skew-symmet\-ric with respect to $Y$ and $Z$ tensor ${\cal N}^{\,(5)}(X,Y,Z)$ is defined by
\begin{align*}
 & {\cal N}^{\,(5)}(X,Y,Z) = {f} Z\,(g(X, \widetilde QY)) - {f} Y\,(g(X, \widetilde QZ)) \\
 & + g([X, {f} Z], \widetilde QY) - g([X,{f} Y], \widetilde QZ) + g([Y,{f} Z] -[Z, {f} Y] - {f}[Y,Z],\ \widetilde Q X).
\end{align*}
For~particular values of the tensor ${\cal N}^{\,(5)}$ we get
\begin{align}\label{KK}
\nonumber
 {\cal N}^{\,(5)}(X,\xi_i,Z) & = -{\cal N}^{\,(5)}(X, Z, \xi_i) = g( {\cal N}^{\,(3)}_i(Z),\, \widetilde Q X),\\
\nonumber
 {\cal N}^{\,(5)}(\xi_i,Y,Z) &= g([\xi_i, {f} Z], \widetilde QY) -g([\xi_i,{f} Y], \widetilde QZ),\\
 {\cal N}^{\,(5)}(\xi_i,\xi_j,Y) &= {\cal N}^{\,(5)}(\xi_i,Y,\xi_j)=0.
\end{align}
\end{Proposition}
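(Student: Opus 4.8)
The plan is to prove the covariant-derivative formula \eqref{3.1-new} by the same route used in the classical $f$-structure theory (cf.\ Blair's treatment of the analogous formula for metric $f$-manifolds), tracking throughout the extra terms produced by $\widetilde Q = Q - \mathrm{id}$, which is precisely what $\mathcal N^{(5)}$ collects. First I would start from the Koszul-type identity for $g((\nabla_X f)Y,Z)$. Using the fact that $f$ is skew-symmetric (noted in the excerpt), one has $g((\nabla_X f)Y,Z) + g((\nabla_X f)Z,Y) = X(g(fY,Z)) + g(fZ,\nabla_X Y)+g(fY,\nabla_X Z) - X(g(fY,Z))$, so the symmetric part in $Y,Z$ vanishes and $g((\nabla_X f)Y,Z)$ is skew in $Y,Z$; hence it suffices to produce an expression for the cyclic combination. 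The standard device is to write
\begin{align*}
 2\,g((\nabla_X f)Y,Z) &= g((\nabla_X f)Y,Z) - g((\nabla_Y f)Z,X) - g((\nabla_Z f)X,Y) \\
 &\quad + \big[g((\nabla_X f)Y,Z) + g((\nabla_Y f)Z,X) + g((\nabla_Z f)X,Y)\big],
\end{align*}
expand each $\nabla f$ by the product rule, and convert $\nabla$-terms into Lie-bracket and $\Phi$-terms using $\Phi(X,Y) = g(X,fY)$, the torsion-freeness of $\nabla$, and the coboundary formula \eqref{E-3.3}.

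Next I would organize the resulting raw terms into four groups. The genuinely new feature is that differentiating $\Phi(fY,fZ) = g(fY,f^2 Z)$ via \eqref{2.2} no longer gives $-\Phi(Y,Z)$ but rather $g(fY,fZ)$ adjusted by $Q$: one gets $\Phi(fX,fY) = g(fX, Q fY) - 0 = g(fX,fY) + g(fX,\widetilde Q fY)$, and more usefully one uses \eqref{2.2} directly as $g(fX,fY) = g(X,QY) - \sum_i \eta^i(X)\eta^i(Y) = g(X,Y) + g(X,\widetilde Q Y) - \sum_i\eta^i(X)\eta^i(Y)$. Carrying the $g(\cdot,\widetilde Q\,\cdot)$ corrections through the expansion and collecting them is exactly how the terms $fZ(g(X,\widetilde Q Y))$, $g([X,fZ],\widetilde Q Y)$, and $g([Y,fZ]-[Z,fY]-f[Y,Z],\widetilde Q X)$ in the definition of $\mathcal N^{(5)}$ arise; the $[\widetilde Q,f]=0$, $\widetilde Q\xi_i=0$, $\eta^i\circ\widetilde Q=0$ relations (from the displayed identities before Proposition \ref{thm6.1}) are used repeatedly to discard or simplify subterms. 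The remaining (classical-looking) terms reassemble into $3\,d\Phi(X,fY,fZ) - 3\,d\Phi(X,Y,Z)$, the $\mathcal N^{(1)}$-term via \eqref{2.6X} and \eqref{4.NN}, the $\mathcal N^{(2)}_i$-term via its defining formula, and the two $d\eta^i$-terms, using $d\eta^i(X,Y) = \tfrac12(X\eta^i(Y)-Y\eta^i(X)-\eta^i([X,Y]))$.

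For the particular-value identities \eqref{KK} I would simply substitute. Setting $Y=\xi_i$ in $\mathcal N^{(5)}$ and using $\widetilde Q\xi_i=0$, $f\xi_i=0$, $\eta^i\circ f=0$ kills the first, third, and fifth listed terms, leaving $-fY(g(X,\widetilde Q Z))$-type and bracket terms that reorganize into $g((\pounds_{\xi_i}f)Z,\widetilde Q X) = g(\mathcal N^{(3)}_i(Z),\widetilde Q X)$; skew-symmetry in the last two slots then gives the $-\mathcal N^{(5)}(X,Z,\xi_i)$ statement. Setting $X=\xi_i$ and using $\widetilde Q\xi_i = 0$ again annihilates the last block (the one paired with $\widetilde Q X$) and the terms $g(X,\widetilde Q\,\cdot)$ reduce because $g(\xi_i,\widetilde Q\,\cdot)=\eta^i(\widetilde Q\,\cdot)=0$, leaving precisely $g([\xi_i,fZ],\widetilde Q Y) - g([\xi_i,fY],\widetilde Q Z)$. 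Finally $\mathcal N^{(5)}(\xi_i,\xi_j,Y)=0$ follows because every surviving term now carries a factor $\widetilde Q\xi_j = 0$ or $g(\xi_i,\widetilde Q\,\cdot) = 0$, and skew-symmetry gives the $\xi_j$-in-the-third-slot version.

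The main obstacle is bookkeeping rather than conceptual: keeping the numerous $\widetilde Q$-correction terms consistently signed and correctly slotted while simultaneously verifying that the classical skeleton reproduces $3\,d\Phi(X,fY,fZ)-3\,d\Phi(X,Y,Z)$ and the $\mathcal N^{(1)}, \mathcal N^{(2)}_i$ contributions with the right coefficients. A careful check that the $\widetilde Q$-terms do \emph{not} interfere with the $d\Phi$-reassembly — which works because $\Phi$ itself is unchanged (it involves $f$, not $Q$) — is the delicate point; the identities $[\widetilde Q,f]=0$ and $\eta^i\circ\widetilde Q=0$ are what make this separation clean.
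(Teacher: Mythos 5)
Your plan coincides with the route of the cited source \cite{rst-43} (this survey states Proposition~\ref{lem6.0} without proof): the classical Blair-type Koszul/coboundary computation for $2\,g((\nabla_{X}f)Y,Z)$, with the corrections coming from $g(fX,fY)=g(X,QY)-\sum_i\eta^i(X)\,\eta^i(Y)$ collected into ${\cal N}^{\,(5)}$, and your substitution checks for \eqref{KK} are correct. The only blemishes are cosmetic, not structural: the display meant to show that the part symmetric in $Y,Z$ vanishes is garbled (skewness of $\nabla_X f$ follows directly from $f$ being $g$-skew and $\nabla g=0$), in the $Y=\xi_i$ computation it is the last term $g([\xi_i,fZ]-f[\xi_i,Z],\,\widetilde Q X)$ that survives and yields $g({\cal N}^{\,(3)}_i(Z),\widetilde Q X)$ while the other four die by $f\xi_i=0$ and $\widetilde Q\,\xi_i=0$, and the identity should read $\Phi(fX,fY)=g(X,QfY)$ rather than $g(fX,QfY)$.
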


Similar to the classical case, we introduce broad classes of weak metric $f$-structures.

\begin{Definition}\rm
(i) A weak metric $f$-structure $({f},Q,\xi_i,\eta^i,g)$ is called a \textit{weak ${K}$-structure} if it is normal and
$d \Phi=0$. We define two subclasses of weak ${K}$-manifolds
as follows:

(ii) \textit{weak ${\cal C}$-manifolds} if $d\eta^i = 0$ for any $i$, and

(iii) \textit{weak ${\cal S}$-manifolds} if the following is valid:
\begin{align}\label{2.3}
 \Phi=d{\eta^1}=\ldots =d{\eta^s} .
\end{align}
Omitting the normality condition, we get the following: a weak metric $f$-structure is called

(i)~a \textit{weak almost ${\cal K}$-structure} if $d\Phi=0$;

(ii)~a \textit{weak almost ${\cal C}$-structure} if $\Phi$ and $\eta^i\ (1\le i\le s)$ are closed forms;

(iii)~a \textit{weak almost ${\cal S}$-structure} if \eqref{2.3} is valid (hence, $d\Phi=0$).

\noindent
A weak almost ${\cal S}$-structure, whose structure vector fields ${\xi_i}$ are Killing,~i.e.,
\begin{eqnarray*}
 (\pounds_{{\xi_i}}\,g)(X,Y)
  = g(\nabla_Y {\xi_i}, X) + g(\nabla_X {\xi_i}, Y) = 0 ,
\end{eqnarray*}
is called a \textit{weak $f$-{\rm K}-contact structure}.
\end{Definition}

For a weak almost ${\cal K}$-structure (and its special cases, a weak almost ${\cal S}$-structure and a weak almost ${\cal C}$-structure), the distribution ${\cal D}^\bot$ is involutive.
Moreover, for a weak almost ${\cal S}$-structure and a weak almost ${\cal C}$-structure, we obtain $[\xi_i, \xi_j]=0$;
in other words, the distribution ${\cal D}^\bot$ of these manifolds
is tangent to a $\mathfrak{g}$-foliation with an abelian Lie~algebra.

\begin{Remark}\rm
Let $\mathfrak{g}$ be a Lie algebra of dimension $s$.
We say that a foliation $\mathcal{F}$ of dimension $s$ on a smooth connected manifold $M$ is a $\mathfrak{g}$-\textit{foliation} if there exist
complete vector fields $\xi_1,\ldots,\xi_s$ on $M$ which, when restricted to each leaf of $\mathcal{F}$, form a parallelism of this submanifold
with a Lie algebra isomorphic to $\mathfrak{g}$, see, for example, \cite{AM-1995,RWo-2}.
\end{Remark}


The following diagram (well known for classical structures) summarizes the relationships between some classes of weak metric $f$-manifolds considered in this article:
\[
\left|
   \begin{array}{c}
  \textrm{weak\ metric} \\
    f\textrm{-manifold}\\
   \end{array}
 \right|
\overset{\Phi=d\eta^i}\supset
  \left|
   \begin{array}{c}
  \textrm{weak\ almost}  \\
    {\cal S}\textrm{-manifold} \\
   \end{array}
 \right|
\overset{\xi_i\,\textrm{-Killing}}\supset
  \left|
   \begin{array}{c}
  \textrm{weak}  \\
   f\textrm{-K-contact}\\
   \end{array}
 \right|
 \overset{{\cal N}^{\,(1)}=0}\supset
 \left|
   \begin{array}{c}
 \textrm{weak} \\
  {\cal S}\textrm{-manifold} \\
   \end{array}
 \right|.
\]
For $s=1$, we get the following diagram:
\[
\left|
   \begin{array}{c}
  \textrm{weak\ almost} \\
  \textrm{contact\ metric}\\
  \textrm{manifold}
   \end{array}
 \right|
\overset{\Phi=d\eta}\supset
  \left|
   \begin{array}{c}
  \textrm{weak\ contact} \\
  \textrm{metric} \\
  \textrm{manifold}
   \end{array}
 \right|
\overset{\xi\,\textrm{-Killing}}\supset
  \left|
   \begin{array}{c}
  \textrm{weak} \\
  \textrm{K-contact} \\
  \textrm{manifold}
   \end{array}
 \right|
\overset{{\cal N}^{\,(1)}=0}\supset
 \left|
   \begin{array}{c}
 \textrm{weak} \\
 \textrm{Sasakian} \\
 \textrm{manifold}
   \end{array}
 \right|.
\]

\section{Geometry of weak almost ${\cal S}$-manifolds}
\label{sec:03a}

For a weak almost ${\cal S}$-structure, the distribution ${\cal D}$ is not involutive, since we have
\[
 g([X, {f} X], {\xi_i})= 2\,d{\eta^i}({f} X,X) = g({f} X,{f} X)>0\quad (X\in{\cal D}\setminus\{0\}).
\]

\begin{Proposition}[see Theorem~2.2 in \cite{rst-43}]\label{thm6.2A}
For a weak almost ${\cal S}$-structure, the tensors ${\cal N}^{\,(2)}_i$ and ${\cal N}^{\,(4)}_{ij}$ vanish;
moreover, ${\cal N}^{\,(3)}_i$ vanishes if and only if $\,\xi_i$ is a Killing vector field.
\end{Proposition}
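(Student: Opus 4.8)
The plan is to work directly from the definitions of the tensors ${\cal N}^{\,(2)}_i$, ${\cal N}^{\,(3)}_i$, ${\cal N}^{\,(4)}_{ij}$ together with the defining relation of a weak almost ${\cal S}$-structure, namely $\Phi = d\eta^1 = \dots = d\eta^s$, and the compatibility identities \eqref{Eq-f-01}. First I would handle ${\cal N}^{\,(4)}_{ij}$: since ${\cal N}^{\,(4)}_{ij}(X) = 2\,d\eta^j(\xi_i, X)$ and $d\eta^j = \Phi$, this equals $2\,\Phi(\xi_i, X) = 2\,g(\xi_i, f X) = -2\,g(f\xi_i, X) = 0$ by $f\xi_i = 0$. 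So ${\cal N}^{\,(4)}_{ij}$ vanishes immediately. Next, for ${\cal N}^{\,(2)}_i(X,Y) = 2\,d\eta^i(fX,Y) - 2\,d\eta^i(fY,X)$, I would substitute $d\eta^i = \Phi$ to get $2\,\Phi(fX,Y) - 2\,\Phi(fY,X) = 2\,g(fX, fY) - 2\,g(fY, fX) = 0$ by skew-symmetry of $f$; alternatively one can use the expression ${\cal N}^{\,(2)}_i(X,Y) = \eta^i([\widetilde Q X, fY])$ from a normal structure, but the direct $\Phi$-substitution is cleaner and does not need normality.

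For the statement about ${\cal N}^{\,(3)}_i$, the key is the general identity \eqref{3.1-new} expressing $g((\nabla_X f)Y, Z)$ in terms of $d\Phi$, ${\cal N}^{\,(1)}$, the $d\eta^i$-terms and ${\cal N}^{\,(5)}$. For a weak almost ${\cal S}$-structure we have $d\Phi = 0$ and $d\eta^i = \Phi$, so \eqref{3.1-new} simplifies considerably; in particular the $d\Phi$ terms drop out. I would then relate $\nabla_{\xi_i} f$ to $\pounds_{\xi_i} f = {\cal N}^{\,(3)}_i$ and to $\nabla \xi_i$. The cleanest route is probably: ${\cal N}^{\,(3)}_i(X) = (\pounds_{\xi_i} f)X = \nabla_{\xi_i}(fX) - f(\nabla_{\xi_i} X) - (\nabla_{fX}\xi_i - f\nabla_X\xi_i) \cdot(\text{correction})$ — more precisely, using $[\xi_i, fX] = \nabla_{\xi_i}(fX) - \nabla_{fX}\xi_i$ and $f[\xi_i, X] = f\nabla_{\xi_i}X - f\nabla_X\xi_i$, one gets ${\cal N}^{\,(3)}_i(X) = (\nabla_{\xi_i} f)X - \nabla_{fX}\xi_i + f\nabla_X\xi_i$. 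So ${\cal N}^{\,(3)}_i = 0$ is equivalent to $(\nabla_{\xi_i} f)X = \nabla_{fX}\xi_i - f\nabla_X\xi_i$ for all $X$.

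The main work — and the expected obstacle — is the direction ``$\xi_i$ Killing $\Rightarrow {\cal N}^{\,(3)}_i = 0$'' and its converse, done by a careful bookkeeping of which terms in \eqref{3.1-new} survive and pairing them against the Killing condition $g(\nabla_Y\xi_i, X) + g(\nabla_X\xi_i, Y) = 0$. Concretely, I would expand $g({\cal N}^{\,(3)}_i(X), Z)$ for $X, Z \in {\cal D}$ using the formula above, feed in the simplified \eqref{3.1-new} (with its $d\eta^i \mapsto \Phi$ substitution, which produces terms like $g(fX, X)\eta^i(\cdot)$ that vanish on ${\cal D}$), and also use the special values \eqref{KK} of ${\cal N}^{\,(5)}$ to control the $\widetilde Q$-dependent contributions; note ${\cal N}^{\,(5)}(X, \xi_i, Z) = g({\cal N}^{\,(3)}_i(Z), \widetilde Q X)$ ties ${\cal N}^{\,(5)}$ back to ${\cal N}^{\,(3)}_i$ itself, which suggests an algebraic consistency argument rather than a brute-force expansion. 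The subtle point is that $Q \neq \mathrm{id}$, so the symmetrization that trivially works in the classical ${\cal S}$-manifold case now carries $\widetilde Q$-correction terms; one must check these cancel, which should follow from $[\widetilde Q, f] = 0$, $\widetilde Q\xi_i = 0$ and self-adjointness of $\widetilde Q$. Since this lemma is attributed to Theorem~2.2 of \cite{rst-43}, I would expect the published proof to follow exactly this line: reduce the two non-trivial vanishings to $\Phi$-substitutions, then derive the ${\cal N}^{\,(3)}_i$ characterization from the covariant-derivative formula specialized to $d\Phi = 0$, isolating the Killing condition as the symmetric part of $\nabla\xi_i$.
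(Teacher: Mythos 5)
Your handling of ${\cal N}^{\,(2)}_i$ and ${\cal N}^{\,(4)}_{ij}$ is correct and complete: substituting $d\eta^i=\Phi$ and using the skew-symmetry of $f$ together with $f\xi_i=0$ settles both in one line each. The genuine gap is the equivalence for ${\cal N}^{\,(3)}_i$, which is the only substantive part of the statement: you outline a bookkeeping scheme through \eqref{3.1-new} and assert that the $\widetilde Q$-correction terms ``should'' cancel, but you never exhibit the cancellation, so the iff is not actually proved. Your own identity ${\cal N}^{\,(3)}_i(X)=(\nabla_{\xi_i}f)X-\nabla_{fX}\xi_i+f\nabla_X\xi_i$ makes the missing piece visible: since $d\eta^i=\Phi$ forces the skew-symmetric part of $X\mapsto\nabla_X\xi_i$ to be $-f$, one may write $\nabla_X\xi_i=-fX+h^s_iX$ with $h^s_i$ self-adjoint ($\xi_i$ Killing iff $h^s_i=0$), and your identity becomes ${\cal N}^{\,(3)}_i=\nabla_{\xi_i}f+f\,h^s_i-h^s_if$; hence even the direction ``Killing $\Rightarrow{\cal N}^{\,(3)}_i=0$'' still requires $\nabla_{\xi_i}f=0$, which by \eqref{3.1AA} is governed exactly by ${\cal N}^{\,(5)}(\xi_i,\cdot\,,\cdot)$, i.e.\ by the unverified $\widetilde Q$-terms. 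In the converse direction, restricting to $X,Z\in{\cal D}$ cannot suffice either: vanishing of $\pounds_{\xi_i}g$ on ${\cal D}\times{\cal D}$ does not yet give a Killing field, and your sketch says nothing about the components involving ${\cal D}^\bot$.

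Both difficulties disappear if you argue through $\Phi$ rather than through $\nabla f$; the survey itself gives no proof of Proposition~\ref{thm6.2A} (it cites \cite{rst-43}), but this is precisely the mechanism used in its proof of the theorem on weak ${\cal K}$-manifolds in Section~\ref{sec:3cs}. For a weak almost ${\cal S}$-structure, $\iota_{\xi_i}\Phi=0$ and $d\Phi=0$, so Cartan's formula $\pounds_{\xi_i}=\iota_{\xi_i}d+d\,\iota_{\xi_i}$ gives $\pounds_{\xi_i}\Phi=0$, and then \eqref{3.9A} yields $g(X,{\cal N}^{\,(3)}_i(Y))=-(\pounds_{\xi_i}g)(X,fY)$ for all $X,Y$. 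If $\xi_i$ is Killing, the right-hand side vanishes and ${\cal N}^{\,(3)}_i=0$ follows with no discussion of $\nabla_{\xi_i}f$ or $\widetilde Q$ at all. Conversely, if ${\cal N}^{\,(3)}_i=0$, then $(\pounds_{\xi_i}g)(X,Z)=0$ for every $X$ and every $Z\in f(TM)={\cal D}$, while $(\pounds_{\xi_i}g)(\xi_j,\xi_k)=0$ follows from $[\xi_i,\xi_j]=0$ (a consequence of $d\eta^k=\Phi$ and $d\Phi=0$ already noted in Section~\ref{sec:01}); by symmetry of $\pounds_{\xi_i}g$ this gives $\pounds_{\xi_i}g=0$. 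So the statement is true and your first two claims stand, but as written the decisive step of your proposal is postponed rather than proved; either carry out the $\widetilde Q$-cancellation in \eqref{3.1-new} explicitly, or replace that part by the short Lie-derivative argument above.
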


By ${\cal N}^{\,(4)}_{ij}=0$ we~have
 $g(X,\nabla_{\xi_i}\,\xi_j)+g(\nabla_{X}\,\xi_i,\,\xi_j)=0$
 for all
 $X\in\mathfrak{X}_M$.
Symmetrizing the above equality and using
$g(\xi_i,\, \xi_j)=\delta_{ij}$ yield
 $\nabla_{\xi_i}\,\xi_j+\nabla_{\xi_j}\,\xi_i = 0$.
From this and $[\xi_i, \xi_j]=0$ it follows that 
\begin{align}\label{2-xi}
 \nabla_{\xi_i}\,\xi_j = 0,\qquad 1\le i,j\le s.
\end{align}

\begin{Corollary}
\label{thm6.2}
For a weak almost ${\cal S}$-structure, the distribution ${\cal D}^\bot$ is tangent to 
a $\mathfrak{g}$-foliation with totally geodesic flat $($that is $R_{\xi_i,\xi_j}\,\xi_k=0)$ leaves.
\end{Corollary}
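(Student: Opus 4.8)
The plan is to derive Corollary~\ref{thm6.2} directly from the displayed consequences of the weak almost $\mathcal{S}$-structure that have just been established. First I would recall that a weak almost $\mathcal{S}$-structure is in particular a weak almost $\mathcal{K}$-structure, so by the remark following the definitions the distribution $\mathcal{D}^\bot$ is involutive; combined with the identity $[\xi_i,\xi_j]=0$ (also stated there), the complete commuting vector fields $\xi_1,\dots,\xi_s$ span $\mathcal{D}^\bot$ and restrict to a parallelism of each leaf with abelian Lie algebra, which is exactly the definition of a $\mathfrak{g}$-foliation with $\mathfrak{g}$ abelian. So the first assertion is immediate.

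Next I would establish that the leaves are totally geodesic. The second fundamental form of $\mathcal{D}^\bot$ is controlled by the $\mathcal{D}$-components of $\nabla_{\xi_i}\xi_j$, and equation~\eqref{2-xi} asserts $\nabla_{\xi_i}\xi_j=0$ for all $1\le i,j\le s$. In particular $\nabla_{\xi_i}\xi_j+\nabla_{\xi_j}\xi_i=0$ lies in $\mathcal{D}^\bot$ (indeed is zero), so the second fundamental form of $\mathcal{D}^\bot$ vanishes and the leaves are totally geodesic in $(M,g)$. Since the leaves are totally geodesic, their intrinsic Levi-Civita connection is the restriction of $\nabla$, and hence their intrinsic curvature tensor is the restriction of $R$ to triples of vectors tangent to $\mathcal{D}^\bot$, i.e. it is computed from the $\xi_i$'s using $\nabla$.

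Finally I would compute the intrinsic curvature of a leaf to show it is flat, i.e. $R_{\xi_i,\xi_j}\xi_k=0$. Using $\nabla_{\xi_i}\xi_j=0$ for all $i,j$ (from \eqref{2-xi}) and $[\xi_i,\xi_j]=0$, the definition $R_{\xi_i,\xi_j}\xi_k=\nabla_{\xi_i}\nabla_{\xi_j}\xi_k-\nabla_{\xi_j}\nabla_{\xi_i}\xi_k-\nabla_{[\xi_i,\xi_j]}\xi_k$ gives three terms each of which vanishes: $\nabla_{\xi_j}\xi_k=0$ so the first term is $\nabla_{\xi_i}0=0$, similarly for the second, and the third is $\nabla_0\xi_k=0$. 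Therefore $R_{\xi_i,\xi_j}\xi_k=0$, which is precisely flatness of the leaves. I do not expect any serious obstacle here: the corollary is essentially a repackaging of \eqref{2-xi} together with involutivity, and the only mild care needed is the standard remark that for a \emph{totally geodesic} foliation the intrinsic curvature agrees with the ambient $R$ restricted to tangent directions, so that $R_{\xi_i,\xi_j}\xi_k=0$ legitimately expresses intrinsic flatness of the leaves rather than merely a statement about $M$.
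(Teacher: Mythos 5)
Your proposal is correct and follows essentially the same route as the paper: the corollary is stated as an immediate consequence of the identity $[\xi_i,\xi_j]=0$ (involutivity and the abelian $\mathfrak{g}$-foliation) together with \eqref{2-xi}, $\nabla_{\xi_i}\xi_j=0$, which the paper derives from ${\cal N}^{\,(4)}_{ij}=0$ right before the statement; your direct computation of $R_{\xi_i,\xi_j}\xi_k=0$ from these two facts is exactly the intended argument. The only extra content you add is the standard remark that for totally geodesic leaves the intrinsic curvature is the restriction of the ambient one, which the paper sidesteps by defining flatness as $R_{\xi_i,\xi_j}\xi_k=0$ in the statement itself.
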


The following corollary of Propositions~\ref{lem6.0} and \ref{thm6.2A} generalizes well-known results with $Q={\rm id}_{\,TM}$, e.g., \cite[Proposition 1.4]{b1970}
and \cite[Proposition~2.1]{DIP-2001}.

\begin{Proposition}
\label{lem6.1}
For a weak almost ${\cal S}$-structure we get
\begin{align}\label{3.1A}
\notag
 2\,g((\nabla_{X}{f})Y,Z) & = g({\cal N}^{\,(1)}(Y,Z),{f} X) {+}2\,g(fX,fY)\,\bar\eta(Z) {-}2\,g(fX,fZ)\,\bar\eta(Y) \\
 & + {\cal N}^{\,(5)}(X,Y,Z),
\end{align}
where $\bar\eta=\sum\nolimits_{\,i}\eta^i$.
Taking $X=\xi_i$ in \eqref{3.1A}, we obtain
\begin{align}\label{3.1AA}
 2\,g((\nabla_{\xi_i}{f})Y,Z) &= {\cal N}^{\,(5)}(\xi_i,Y,Z) ,\quad 1\le i\le s.
\end{align}
\end{Proposition}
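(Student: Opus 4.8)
The plan is to specialize the master formula \eqref{3.1-new} from Proposition~\ref{lem6.0} to the case of a weak almost ${\cal S}$-structure and then simplify the right-hand side term by term. Recall that a weak almost ${\cal S}$-structure satisfies \eqref{2.3}, i.e., $\Phi = d\eta^1 = \cdots = d\eta^s$, and in particular $d\Phi = 0$, so the first two terms $3\,d\Phi(X, fY, fZ) - 3\,d\Phi(X, Y, Z)$ in \eqref{3.1-new} vanish immediately.

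The next step is to rewrite the $\sum_i$ term. By Proposition~\ref{thm6.2A}, for a weak almost ${\cal S}$-structure the tensor ${\cal N}^{\,(2)}_i$ vanishes, so the summand ${\cal N}^{\,(2)}_i(Y,Z)\,\eta^i(X)$ drops out. For the remaining two pieces I would use \eqref{2.3} in the form $d\eta^i = \Phi$, hence $2\,d\eta^i(fY, X) = 2\,\Phi(fY, X) = 2\,g(fY, f X) = 2\,g(fX, fY)$ (using skew-symmetry of $f$ and that $\Phi(U,V) = g(U, fV)$); here one must be a little careful with signs and with the fact that $f^2 = -Q + \sum_j \eta^j \otimes \xi_j$ rather than $-\mathrm{id}$, but since $\Phi(fY, X) = g(fY, fX)$ directly, no use of $f^2$ is actually needed. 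Summing over $i$ then converts $\sum_i 2\,d\eta^i(fY,X)\,\eta^i(Z)$ into $2\,g(fX, fY)\sum_i \eta^i(Z) = 2\,g(fX,fY)\,\bar\eta(Z)$, and similarly the other term becomes $-2\,g(fX,fZ)\,\bar\eta(Y)$. Combined with the surviving terms $g({\cal N}^{\,(1)}(Y,Z), fX)$ and ${\cal N}^{\,(5)}(X,Y,Z)$, this yields \eqref{3.1A} exactly.

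Finally, for \eqref{3.1AA} I would set $X = \xi_i$ in \eqref{3.1A}. Since $f\xi_i = 0$ by \eqref{Eq-f-01}, we get $g({\cal N}^{\,(1)}(Y,Z), f\xi_i) = 0$ and $g(f\xi_i, fY) = g(f\xi_i, fZ) = 0$, so all terms on the right of \eqref{3.1A} vanish except ${\cal N}^{\,(5)}(\xi_i, Y, Z)$, giving $2\,g((\nabla_{\xi_i} f)Y, Z) = {\cal N}^{\,(5)}(\xi_i, Y, Z)$ for each $i$.

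I do not anticipate a serious obstacle here: the proof is essentially a careful bookkeeping exercise that substitutes the defining conditions of a weak almost ${\cal S}$-structure into the already-established identity \eqref{3.1-new}. The one point deserving attention is the sign and normalization in passing from $d\eta^i$ to $\Phi$ to $g(fX, fY)$ — one should confirm the convention $d\omega(X,Y) = \frac12\{X\omega(Y) - Y\omega(X) - \omega([X,Y])\}$ used in the paper makes $2\,d\eta^i(fY, X)$ equal $g(fX, fY)$ with the correct sign, which is consistent with the displayed computation just before Proposition~\ref{thm6.2A} showing $g([X, fX], \xi_i) = 2\,d\eta^i(fX, X) = g(fX, fX) > 0$. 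Given that sanity check, the rest follows mechanically.
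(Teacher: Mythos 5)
Your proof is correct and matches the paper's intended argument: the paper presents this proposition precisely as a corollary of Propositions~\ref{lem6.0} and \ref{thm6.2A}, obtained by substituting $d\Phi=0$, ${\cal N}^{\,(2)}_i=0$ and $d\eta^i=\Phi$ (so $2\,d\eta^i(fY,X)=2\,g(fX,fY)$) into \eqref{3.1-new}, and then setting $X=\xi_i$ with $f\xi_i=0$ to get \eqref{3.1AA}. Your sign check via the convention for $d\omega$ and the computation preceding Proposition~\ref{thm6.2A} is a sensible sanity check but introduces nothing different from the paper's route.
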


The tensor ${\cal N}^{\,(3)}_i$ is important for weak almost ${\cal S}$-manifolds, see Proposition~\ref{thm6.2A}.
Therefore,
we define the tensor field $h=(h_1,\ldots,h_s)$,~where
\begin{align*}
 h_i=\frac{1}{2}\, {\cal N}^{\,(3)}_i
 = \frac{1}{2}\,\pounds_{\xi_i}{f} .
\end{align*}
Using $[\xi_i, \xi_j]=0$ and $f\xi_j=0$, we obtain
$(\pounds_{\xi_i}{f})\xi_j=[\xi_i, f\xi_j] - f[\xi_i,\xi_j]=0$;
therefore, $h_i\,\xi_j = 0$ is true.
For $X\in{\cal D}$, using $[\xi_i, \xi_j]=0$, we derive:
\[
 0 = 2\Phi(\xi_i,X) = 2\eta^j(\xi_i,X)
 = g(\nabla_X\,\xi_j, \xi_i);
\]
therefore, $g(\nabla_X\,\xi_j, \xi_i)=0$ for all $X\in\mathfrak{X}_M$.
Next, we calculate
\begin{align}\label{4.2}
 (\pounds_{\xi_i}{f})Y
 = (\nabla_{\xi_i}{f})Y - \nabla_{{f} Y}\,\xi_i + {f}\nabla_Y\,\xi_i.
\end{align}
Thus, using $g((\nabla_{\xi_i}{f})Y, \xi_j)=0$, see \eqref{3.1AA} with $Z=\xi_j$, we obtain $\eta^j\circ h_i=0$ for all $1\le i,j\le s$:
\[
 (\eta^j\circ h_i)Y =
 g((\pounds_{\xi_i}{f})Y, \xi_j)
 = g((\nabla_{\xi_i}{f})Y, \xi_j)
 -g(\nabla_{{f} Y}\,\xi_i , \xi_j)
 +g({f}\nabla_Y\,\xi_i, \xi_j)
 = 0.
\]



For an almost ${\cal S}$-structure, the tensor $h_i$ is self-adjoint, trace-free and anti-commutes with $f$,
i.e., $h_i{f}+{f}\, h_i=0$, see \cite{DIP-2001}.
We generalize this result for a weak almost ${\cal S}$-structure.

\begin{Proposition}[see \cite{rst-43,Rov-splitting}]\label{P-4.1}\rm
For a weak almost ${\cal S}$-structure $(f,Q,\xi_i,\eta^i,g)$, the tensor $h_i$ and its conjugate tensor $h_i^*$ satisfy
\begin{eqnarray*}
 g((h_i-h_i^*)X, Y) &=& \frac{1}{2}\,{\cal N}^{\,(5)}(\xi_i, X, Y),\quad X,Y\in TM,\\
 h_i{f}+{f}\, h_i &=& -\frac12\,\pounds_{\xi_i}{Q} ,\\
 h_i{Q}-{Q}\,h_i &=& \frac12\,[\,f,\, \pounds_{\xi_i}{Q}\,] .
\end{eqnarray*}
\end{Proposition}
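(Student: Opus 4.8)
The plan is to establish the three identities by unwinding the definitions of $h_i=\tfrac12\pounds_{\xi_i}f$, $h_i^*$ (the $g$-adjoint of $h_i$), and $\widetilde Q=Q-\mathrm{id}$, and then feeding in the two structural facts already available for a weak almost $\mathcal{S}$-structure: equation \eqref{3.1AA}, which says $2g((\nabla_{\xi_i}f)Y,Z)={\cal N}^{\,(5)}(\xi_i,Y,Z)$, and the expression \eqref{4.2} for $(\pounds_{\xi_i}f)Y$ in terms of $\nabla$. For the first identity, I would compute $g(h_iX,Y)-g(h_i^*X,Y)=g((\pounds_{\xi_i}f)X,Y)-g(X,(\pounds_{\xi_i}f)Y)$ times $\tfrac12$; using \eqref{4.2} the $\nabla_{fY}\xi_i$ and $f\nabla_Y\xi_i$ terms are handled by the skew-symmetry of $f$ and the fact (from Proposition~\ref{thm6.2A}, since $\xi_i$ need not be Killing here) that $\nabla\xi_i$ has a symmetric part; what survives is exactly $g((\nabla_{\xi_i}f)X,Y)+g((\nabla_{\xi_i}f)Y,X)$, and since $(\nabla_{\xi_i}f)$ is skew (as $f$ is skew and $\nabla g=0$... more precisely $g((\nabla_Z f)X,Y)$ is skew in $X,Y$), the symmetrization forces the ${\cal N}^{\,(5)}$ correction via \eqref{3.1AA}. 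So the first line follows by carefully pairing \eqref{3.1AA} with \eqref{4.2}.

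For the second identity, the key observation is that $\pounds_{\xi_i}$ is a derivation, so applying it to the defining relation \eqref{2.1}, namely $f^2=-Q+\sum_j\eta^j\otimes\xi_j$, gives $(\pounds_{\xi_i}f)\circ f + f\circ(\pounds_{\xi_i}f) = -\pounds_{\xi_i}Q + \sum_j\big((\pounds_{\xi_i}\eta^j)\otimes\xi_j + \eta^j\otimes\pounds_{\xi_i}\xi_j\big)$. By Proposition~\ref{thm6.2A}, ${\cal N}^{\,(4)}_{ij}=\pounds_{\xi_i}\eta^j=0$, and by the abelian property $[\xi_i,\xi_j]=0$ already recorded in the excerpt, $\pounds_{\xi_i}\xi_j=0$; hence the entire sum drops out and $2h_if+2fh_i=-\pounds_{\xi_i}Q$, which is the claim after dividing by $2$. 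The third identity is obtained the same way by applying $\pounds_{\xi_i}$ to the bracket relation $[Q,f]=0$ from \eqref{Eq-f-01}: $[\pounds_{\xi_i}Q,f]+[Q,\pounds_{\xi_i}f]=0$, i.e. $[Q,\pounds_{\xi_i}f]=-[\pounds_{\xi_i}Q,f]=[f,\pounds_{\xi_i}Q]$; dividing by $2$ and recognizing $\pounds_{\xi_i}f=2h_i$ yields $h_iQ-Qh_i=\tfrac12[f,\pounds_{\xi_i}Q]$, as stated.

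The main obstacle I anticipate is the first identity: unlike the algebraic/Leibniz arguments for the last two, it genuinely requires reconciling the Lie-derivative formula \eqref{4.2} with the covariant formula \eqref{3.1AA}, and one must be careful about which terms involving $\nabla_Y\xi_i$ are symmetric versus skew (in the weak almost $\mathcal{S}$ setting $\xi_i$ is not assumed Killing, so $\nabla\xi_i$ is not skew-adjoint and those cross-terms do not simply cancel — they must be shown to cancel in the \emph{difference} $h_i-h_i^*$ rather than individually). Once the bookkeeping of those $\nabla\xi_i$ contributions is done correctly, the ${\cal N}^{\,(5)}(\xi_i,X,Y)$ term emerges exactly from \eqref{3.1AA}, and the skew-symmetry of ${\cal N}^{\,(5)}$ in its last two slots (noted after Proposition~\ref{lem6.0}) is consistent with $h_i-h_i^*$ being determined by a single such tensor evaluation. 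The other two identities are then short consequences of the derivation property of $\pounds_{\xi_i}$ applied to \eqref{2.1} and \eqref{Eq-f-01}.
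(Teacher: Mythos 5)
The survey itself does not prove Proposition~\ref{P-4.1} (it only cites \cite{rst-43,Rov-splitting}), so your plan is judged on its own merits; its route -- pairing \eqref{4.2} with \eqref{3.1AA} for the first identity, and applying the Leibniz rule for $\pounds_{\xi_i}$ to \eqref{2.1} and to $[Q,f]=0$ for the other two -- is the natural one and does work, and your second identity is argued exactly right (using ${\cal N}^{\,(4)}_{ij}=0$ from Proposition~\ref{thm6.2A} and $[\xi_i,\xi_j]=0$). Two places need repair. In the first identity, the surviving covariant term is the \emph{difference} $g((\nabla_{\xi_i}f)X,Y)-g((\nabla_{\xi_i}f)Y,X)=2\,g((\nabla_{\xi_i}f)X,Y)$, not the sum you wrote: the sum vanishes by skewness of $\nabla_{\xi_i}f$ and would give $h_i=h_i^*$. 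Moreover, the cancellation of the cross-terms involving $\nabla\xi_i$ is not mere bookkeeping: writing $A(X,Y)=g(\nabla_X\xi_i,Y)$, those four terms combine into $-2\,d\eta^i(fX,Y)+2\,d\eta^i(fY,X)=-{\cal N}^{\,(2)}_i(X,Y)$, which vanishes precisely by Proposition~\ref{thm6.2A} (equivalently, $d\eta^i=\Phi$ makes $d\eta^i(fX,Y)=g(fX,fY)$ symmetric in $X,Y$); your appeal to "the symmetric part of $\nabla\xi_i$" does not by itself identify this mechanism. With that fixed, \eqref{3.1AA} indeed delivers $\tfrac12{\cal N}^{\,(5)}(\xi_i,X,Y)$.

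The more serious slip is the last line of the third identity: under the standard convention $[A,B]=AB-BA$, your own chain $[Q,\pounds_{\xi_i}f]=[f,\pounds_{\xi_i}Q]$ together with $\pounds_{\xi_i}f=2h_i$ gives $Qh_i-h_iQ=\tfrac12[f,\pounds_{\xi_i}Q]$, i.e.\ $h_iQ-Qh_i=\tfrac12[\pounds_{\xi_i}Q,f]$, the negative of the formula you (and the proposition as printed) display -- you silently flipped the commutator to match the target. The sign can be cross-checked independently: substituting $\pounds_{\xi_i}Q=-2(h_if+fh_i)$ from the second identity and using $h_i\xi_j=0$, $\eta^j\circ h_i=0$ together with \eqref{2.1} yields $[f,\pounds_{\xi_i}Q]=-2(f^2h_i-h_if^2)=2(Qh_i-h_iQ)$. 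So either the bracket in the statement is meant with the opposite sign convention or the statement carries a sign typo; in either case your proof must fix a convention and keep the signs consistent throughout rather than adjusting the final line to agree with the asserted formula.
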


The following corollary generalizes the known property of almost ${\cal S}$-manifolds.

\begin{Corollary}
 Let a weak almost ${\cal S}$-manifold satisfy $\pounds_{\xi_i}{Q} =0$, then
 $\tr h_i=0$.
\end{Corollary}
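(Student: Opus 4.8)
The plan is to combine the three identities from Proposition~\ref{P-4.1} with the extra hypothesis $\pounds_{\xi_i}Q=0$ and the already-established facts about $h_i$. First I would invoke the first identity of Proposition~\ref{P-4.1}, namely $g((h_i-h_i^*)X,Y)=\tfrac12\,{\cal N}^{\,(5)}(\xi_i,X,Y)$, and recall that ${\cal N}^{\,(5)}$ is expressed through $\widetilde Q$; since $\widetilde Q=Q-{\rm id}_{TM}$, one checks that $\pounds_{\xi_i}Q=0$ forces the relevant $\widetilde Q$-terms entering $g((h_i-h_i^*)X,Y)$ to vanish, so that $h_i$ becomes self-adjoint, $h_i=h_i^*$. (Alternatively, self-adjointness of $h_i$ can be read off directly once $\pounds_{\xi_i}Q=0$ is plugged into the middle identity $h_if+fh_i=-\tfrac12\,\pounds_{\xi_i}Q$.)

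Next I would use the second identity $h_if+fh_i=-\tfrac12\,\pounds_{\xi_i}Q=0$, i.e.\ $h_i$ anti-commutes with $f$ on all of $TM$. Since $f$ has rank $2n$ with $\ker f={\cal D}^\bot=\operatorname{span}\{\xi_1,\dots,\xi_s\}$, and we already know $h_i\xi_j=0$ and $\eta^j\circ h_i=0$ for all $i,j$, the operator $h_i$ preserves the splitting $TM={\cal D}\oplus{\cal D}^\bot$ and acts trivially on ${\cal D}^\bot$. Hence $\tr h_i=\operatorname{tr}(h_i|_{\cal D})$, and it suffices to compute the trace on the contact distribution ${\cal D}$, where $f$ is nonsingular (its square is $-Q$ restricted to ${\cal D}$, which is invertible).

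On ${\cal D}$, the standard trace trick applies: from $h_if=-fh_i$ we get $h_i=-f h_i f^{-1}$ (using that $f|_{\cal D}$ is invertible), so $\operatorname{tr}(h_i|_{\cal D})=-\operatorname{tr}(f h_i f^{-1})=-\operatorname{tr}(h_i|_{\cal D})$ by cyclicity of the trace, whence $\operatorname{tr}(h_i|_{\cal D})=0$ and therefore $\tr h_i=0$. The one point that needs a little care — and is the main (mild) obstacle — is justifying that $f|_{\cal D}$ is genuinely invertible and that $h_i$ really does map ${\cal D}$ into ${\cal D}$; both follow from $\eta^j\circ h_i=0$ (so $h_i(TM)\subset{\cal D}$) together with the nonsingularity of $Q$ via $f^2=-Q+\sum_i\eta^i\otimes\xi_i$, which restricts to $f^2=-Q$ on ${\cal D}$. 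I do not expect to need the third identity of Proposition~\ref{P-4.1}; it is automatically consistent, since $\pounds_{\xi_i}Q=0$ also kills $[f,\pounds_{\xi_i}Q]$, reaffirming that $h_i$ commutes with $Q$.
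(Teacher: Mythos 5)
Your proposal is correct and essentially follows the paper's own route: both proofs extract the anticommutation $h_if+fh_i=-\frac12\,\pounds_{\xi_i}Q=0$ from Proposition~\ref{P-4.1} and conclude $\tr h_i=0$ from it — the paper by pairing eigenvalues ($h_iX=\lambda X\Rightarrow h_i(fX)=-\lambda\,fX$), you by the equivalent conjugation $h_i|_{\mathcal D}=-f\,h_i\,f^{-1}|_{\mathcal D}$ and cyclicity of the trace, using $h_i\xi_j=0$, $\eta^j\circ h_i=0$ and the invertibility of $f|_{\mathcal D}$. The only caveat is that your opening claim that $\pounds_{\xi_i}Q=0$ makes $h_i$ self-adjoint is neither needed for the trace argument nor clearly justified (the first identity of Proposition~\ref{P-4.1} involves ${\cal N}^{\,(5)}(\xi_i,\cdot,\cdot)$, which is not obviously killed by $\pounds_{\xi_i}Q=0$, and the middle identity gives anticommutation, not self-adjointness), so it should simply be dropped.
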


\begin{proof}
If $h_i X=\lambda X$, then using $h_i f =- f h_i$
(by assumptions and Proposition~\ref{P-4.1}),
we get $h_i f X=-\lambda f X$.
Thus, if $\lambda$ is an eigenvalue of $h_i$, then $-\lambda$ is also an eigenvalue of $h_i$; hence, $\tr h_i=0$.
\end{proof}

\begin{Definition}[see \cite{rst-43}]\rm
Framed weak $f$-structures $({f},Q,{\xi_i},{\eta^i})$ and $({f}',Q',{\xi_i},{\eta^i})$
on a smooth manifold
are said to be \textit{homothetic}
if the following conditions:
\begin{subequations}
\begin{align}\label{Tran'}
  & {f} = \sqrt\lambda\ {f}', \\
  & Q\,|_{\,{\mathcal D}}=\lambda\,Q'|_{\,\mathcal D} ,
\end{align}
\end{subequations}
are valid for some real $\lambda>0$.
Weak metric $f$-structures
$({f},Q,{\xi_i},{\eta^i},g)$ and $({f}',Q',{\xi_i},{\eta^i},g')$
are said to be \textit{homothetic} if they satisfy conditions (\ref{Tran'},b) and
\begin{align}\label{Tran2'}
 g|_{\,{\mathcal D}} = \lambda^{-\frac12}\,g'|_{\,{\mathcal D}},\quad
 g({\xi_i},\,\cdot) = {g}'({\xi_i},\,\cdot) .
\end{align}
\end{Definition}

\begin{Proposition}[see \cite{rst-43}]
Let a framed weak $f$-structure $({f},Q,{\xi_i},{\eta^i})$ satisfy
 $Q\,|_{\,{\mathcal D}}=\lambda\,{\rm id}_{\mathcal D}$
for some real $\lambda>0$. Then
$({f}', {\xi_i}, {\eta^i})$ is a framed $f$-structure, where ${f}'$ is given by \eqref{Tran'}.
Moreover, if $({f},Q,{\xi_i},{\eta^i},g)$ is a weak almost ${\cal S}$-structure
and \eqref{Tran'} and \eqref{Tran2'} are valid,
then $({f}',{\xi_i},{\eta^i},{g}')$ is an almost ${\cal S}$-structure.
\end{Proposition}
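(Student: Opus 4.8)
The plan is to verify that the rescaled data $({f}',Q',{\xi_i},{\eta^i},g')$ satisfies the defining identities of a classical almost ${\cal S}$-structure, by reducing everything to the weak identities already assumed for $({f},Q,{\xi_i},{\eta^i},g)$ together with the hypothesis $Q|_{\mathcal D}=\lambda\,{\rm id}_{\mathcal D}$. The first step is the purely algebraic part: from \eqref{Tran'} we have ${f}'=\lambda^{-1/2}{f}$, so ${f}'^2=\lambda^{-1}{f}^2=\lambda^{-1}(-Q+\sum_i\eta^i\otimes\xi_i)$. On $\mathcal D$ this equals $-{\rm id}_{\mathcal D}$ (using $Q|_{\mathcal D}=\lambda\,{\rm id}_{\mathcal D}$), and since ${f}'\xi_i=\lambda^{-1/2}{f}\xi_i=0$ and $\eta^i\circ{f}'=0$ by \eqref{Eq-f-01}, one gets ${f}'^2=-{\rm id}+\sum_i\eta^i\otimes\xi_i$, i.e.\ $Q'={\rm id}_{TM}$. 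Together with ${\eta^i}(\xi_j)=\delta^i_j$ (unchanged) this shows $({f}',\xi_i,\eta^i)$ is a framed $f$-structure. Next, compatibility: for $X,Y\in\mathcal D$, $g'({f}'X,{f}'Y)=\lambda^{1/2}g({f}'X,{f}'Y)=\lambda^{1/2}\lambda^{-1}g({f}X,{f}Y)=\lambda^{-1/2}g(X,QY)=\lambda^{-1/2}\lambda\,g(X,Y)=\lambda^{1/2}g(X,Y)=g'(X,Y)$; for arguments in $\mathcal D^\bot$ both sides of \eqref{2.2} vanish, and the mixed case is immediate. So \eqref{2.2} with $Q'={\rm id}$ holds, and $({f}',\xi_i,\eta^i,g')$ is a metric $f$-structure.

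The second step is to check the ${\cal S}$-condition $\Phi'=d{\eta^1}=\ldots=d{\eta^s}$, where $\Phi'(X,Y)=g'(X,{f}'Y)$. Since the $1$-forms $\eta^i$ and the metric on $\mathcal D^\bot$ are unchanged, $d\eta^i$ is unchanged. On $\mathcal D$ we compute $\Phi'(X,Y)=g'(X,{f}'Y)=\lambda^{1/2}g(X,\lambda^{-1/2}{f}Y)=g(X,{f}Y)=\Phi(X,Y)$, and for arguments not both in $\mathcal D$ one checks $\Phi'$ and $\Phi$ agree (both use ${f}'\xi_i={f}\xi_i=0$ and $\eta^i\circ{f}'=\eta^i\circ{f}=0$, and $g'(\xi_i,\cdot)=g(\xi_i,\cdot)$). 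Hence $\Phi'=\Phi=d\eta^1=\ldots=d\eta^s$ by the weak almost ${\cal S}$-hypothesis, so \eqref{2.3} holds for the primed structure, giving in particular $d\Phi'=0$.

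The final and most delicate step is normality, ${\cal N}'^{(1)}=0$ for $({f}',\xi_i,\eta^i)$; actually, since we only need an \emph{almost} ${\cal S}$-structure here, normality is \emph{not} required, and the Killing condition for $\xi_i$ with respect to $g'$ is what must be confirmed. This is the step I expect to be the main obstacle, because the Levi-Civita connection of $g'$ differs from that of $g$ (the metrics differ by the factor $\lambda^{-1/2}$ only along $\mathcal D$, not along $\mathcal D^\bot$, so $g'$ is \emph{not} a global constant rescaling of $g$). The cleanest route is to use the characterization of the Killing property via the Lie derivative: $(\pounds_{\xi_i}g')(X,Y)=\xi_i(g'(X,Y))-g'([\xi_i,X],Y)-g'(X,[\xi_i,Y])$, which is connection-free. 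I would split into cases by whether $X,Y$ lie in $\mathcal D$ or $\mathcal D^\bot$: on $\mathcal D^\bot$ the forms $g$ and $g'$ coincide and $[\xi_i,\xi_j]=0$ (weak almost ${\cal S}$), so $(\pounds_{\xi_i}g')|_{\mathcal D^\bot}=(\pounds_{\xi_i}g)|_{\mathcal D^\bot}=0$; for the mixed case note $[\xi_i,X]\in\mathcal D$ for $X\in\mathcal D$ — this follows from ${\cal N}^{(4)}_{ij}=0$ (Proposition~\ref{thm6.2A}) — so $g'(\xi_i,[\xi_i,X])=0=g(\xi_i,[\xi_i,X])$, and the remaining term $\xi_i(g'(\xi_i,X))=\xi_i(g(\xi_i,X))$ matches; for $X,Y\in\mathcal D$, $g'=\lambda^{-1/2}g$ there and $[\xi_i,X],[\xi_i,Y]\in\mathcal D$, so $(\pounds_{\xi_i}g')(X,Y)=\lambda^{-1/2}(\pounds_{\xi_i}g)(X,Y)=0$ using that $\xi_i$ is Killing for $g$. (The only subtlety is $\xi_i(\lambda^{-1/2})=0$, which holds since $\lambda$ is a fixed real constant.) Assembling the three cases gives $\pounds_{\xi_i}g'=0$, i.e.\ each $\xi_i$ is Killing for $g'$, and combined with the first two steps this establishes that $({f}',\xi_i,\eta^i,g')$ is an almost ${\cal S}$-structure.
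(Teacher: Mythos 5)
Your first two steps are exactly the verification that is needed, and they are carried out correctly: ${f}'^2=-{\rm id}+\sum_i\eta^i\otimes\xi_i$ with $\eta^i(\xi_j)=\delta^i_j$ (so $({f}',\xi_i,\eta^i)$ is a framed $f$-structure with $Q'={\rm id}$), the compatibility \eqref{2.2} for $g'$ with $Q'={\rm id}$, and $\Phi'=\Phi=d\eta^1=\ldots=d\eta^s$, i.e.\ \eqref{2.3} for the primed data. The paper itself gives no proof (it refers to \cite{rst-43}), and this direct verification is the expected argument; up to this point your proposal is fine.

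The problem is your third step. In this paper's terminology an almost ${\cal S}$-structure is a metric $f$-structure satisfying \eqref{2.3} and nothing more: normality is what upgrades it to an ${\cal S}$-structure, and the Killing property of the $\xi_i$ is what defines the ($f$-K-contact) subclass, not the almost ${\cal S}$ class. So the condition you single out as ``what must be confirmed'' is not part of the statement at all — your proof is already complete after the $\Phi'$ computation. Worse, the argument you sketch for it is not available: you ``use that $\xi_i$ is Killing for $g$'', but the hypothesis is only that $({f},Q,\xi_i,\eta^i,g)$ is a weak almost ${\cal S}$-structure, and by Proposition~\ref{thm6.2A} the $\xi_i$ are Killing precisely when ${\cal N}^{\,(3)}_i=0$, which need not hold. (There is also a small slip there: by \eqref{Tran2'}, $g'|_{\,\mathcal D}=\lambda^{1/2}g|_{\,\mathcal D}$, not $\lambda^{-1/2}g|_{\,\mathcal D}$, though this is immaterial for a Lie-derivative argument with constant $\lambda$.) Delete the final paragraph, or restate it as a remark valid only under the additional weak $f$-K-contact hypothesis; with that correction the proof stands.
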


Denote by ${\rm Ric}({X},{Y})={\rm trace}(Z\to R_{Z,X}Y)$ the Ricci tensor,
where $R_{X, Y} = (\nabla_X\nabla_Y -\nabla_Y\nabla_X -\nabla_{[X,Y]})Z$ is the curvature tensor.
The Ricci operator
is given by
 $g(\Ric^\sharp X,Y)={\rm Ric}(X,Y)$.
The~scalar curvature of $g$ is given by $r={\rm trace}_g \Ric$.

\begin{Remark}\rm
For almost ${\cal S}$-manifolds, we have, see~\cite[Proposition~2.6]{DIP-2001},
\begin{equation}\label{Eq-div-f}
 \Ric^\sharp(\xi_i)=\sum\nolimits_{\,i=1}^{\,2n}(\nabla_{e_i}\,f)\,e_i=2\,n\,\bar\xi .
\end{equation}
Can one generalize \eqref{Eq-div-f}
for weak almost ${\cal S}$-manifolds\,?
\end{Remark}

For a weak almost ${\cal S}$-manifold, the \textit{splitting tensor}
$C:{\cal D}^\bot\times {\cal D}\to{\cal D}$ is defined~by
\begin{equation}\label{E-conulC}
  C_\xi(X)=-(\nabla_{X}\, \xi)^\top\quad (X\in{\cal D},\ \ \xi\in{\cal D}^\bot,\ \ \|\xi\|=1),
\end{equation}
where $^\top:TM\to{\cal D}$ is the orthoprojector, see \cite{Rov-splitting}.
The splitting tensor is decomposed as
 $C_\xi = A_\xi + T_\xi$,
where the skew-symmetric operator ${T}^{\sharp}_\xi$
and the self-adjoint operator ${A}^{\sharp}_\xi$
are given by
\begin{equation}\label{E-CC**}
 g({T}^{\sharp}_\xi X,Y)=g({T}(X,Y),\xi),\quad
 g({A}^{\sharp}_\xi X,Y)=g({b}(X,Y),\xi),\quad
 X,Y\in{\cal D},
\end{equation}
and $T(X,Y) =\frac12(\nabla_X Y-\nabla_Y X)^\bot$ is the integrability tensor
and $b(X,Y) =\frac12(\nabla_X Y+\nabla_Y X)^\bot$ is the second fundamental form
of ${\cal D}$.

Since ${\cal D}^\bot$ defines a totally geodesic foliation, see Corollary~\ref{thm6.2},
then the distribution ${\cal D}$ is {totally geodesic} if and only if
$C_\xi$
is skew-symmetric,
and ${\cal D}$ is integrable if and only if the
tensor $C_\xi$ is self-adjoint.
Thus, $C_\xi\equiv0$ if and only if ${\cal D}$ is integrable and defines a totally geodesic foliation; in this case, by de Rham Decomposition Theorem,
the manifold splits (is locally the product of Riemannian manifolds defined by distributions ${\cal D}$ and ${\cal D}^\bot$), e.g. \cite{Rov-Wa-2021}.

\begin{Theorem}
The splitting tensor of a weak almost ${\cal S}$-manifold
has the following view:
\begin{equation*}
  C_{\xi_i} = f + Q^{-1} f h^*_i\quad (i=1,\ldots,s).
\end{equation*}
\end{Theorem}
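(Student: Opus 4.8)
The plan is to compute the splitting tensor $C_{\xi_i}(X) = -(\nabla_X\xi_i)^\top$ for $X \in {\cal D}$ by relating $\nabla_X\xi_i$ to the tensor $f$ and the tensor $h_i$. First I would recall that, since $\xi_i$ is not assumed Killing, the covariant derivative $\nabla_X\xi_i$ splits into a symmetric and skew-symmetric part with respect to $X\mapsto\nabla_X\xi_i$; the skew-symmetric part is governed by $d\eta^i$ and the symmetric part by $\pounds_{\xi_i}g$. Because $\Phi = d\eta^i$ for a weak almost ${\cal S}$-structure, the skew-symmetric part of $\nabla\xi_i$ on ${\cal D}$ is essentially $f$ (up to the metric relation $\Phi(X,Y)=g(X,fY)$, keeping in mind $g(fX,fY)=g(X,QY)$ on ${\cal D}$). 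The symmetric part should turn out to be expressible through $h_i$, via the standard identity $2\,d\eta^i(X,Y) = g(\nabla_X\xi_i,Y) - g(\nabla_Y\xi_i,X)$ combined with the formula for $h_i = \frac12\pounds_{\xi_i}f$.

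The key computational step is the identity \eqref{4.2}, namely
\[
 (\pounds_{\xi_i}f)Y = (\nabla_{\xi_i}f)Y - \nabla_{fY}\xi_i + f\nabla_Y\xi_i,
\]
which I would rewrite as $2h_i Y = (\nabla_{\xi_i}f)Y - \nabla_{fY}\xi_i + f\nabla_Y\xi_i$. I would also use the companion identity coming from $\Phi = d\eta^i$: differentiating $\Phi(Y,Z) = 2\,d\eta^i(Y,Z)$ and using the Koszul-type formula \eqref{3.1A} for $(\nabla_X f)$ together with $(\nabla_X\Phi)(Y,Z) = g((\nabla_X f)Y, Z)$. Restricting $Y=X\in{\cal D}$, $Z\in{\cal D}$ and $X'=\xi_i$ should let me solve for the ${\cal D}$-component of $\nabla_X\xi_i$. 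Concretely, I expect to show $g(\nabla_X\xi_i, Y) = -g(fX + Q^{-1}fh_i^*X,\ Y)$ for all $X,Y\in{\cal D}$, whence $C_{\xi_i}X = -(\nabla_X\xi_i)^\top = fX + Q^{-1}fh_i^*X$.

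The main obstacle will be correctly bookkeeping the ``weak'' correction terms — the appearances of $Q$ and $\widetilde Q$ coming from $g(fX,fY) = g(X,QY) - \sum_i\eta^i(X)\eta^i(Y)$ and from the tensor ${\cal N}^{(5)}$ in \eqref{3.1A} and \eqref{3.1AA}. In the classical case $Q = \mathrm{id}$ and ${\cal N}^{(5)}=0$, so $h_i^* = h_i$ and one recovers $C_{\xi_i} = f + fh_i$; here I must carefully track that the self-adjoint-part computation produces $h_i^*$ rather than $h_i$, and that the factor $Q^{-1}$ enters because inverting the relation $g(fX,fY)=g(X,QY)$ on ${\cal D}$ to extract the ${\cal D}$-component introduces $Q^{-1}$. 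I would verify consistency by checking the trace and by specializing to $\widetilde Q = 0$, and by confirming that $C_{\xi_i}$ so defined indeed maps ${\cal D}$ to ${\cal D}$, using $[Q,f]=0$ and $h_i\xi_j = \eta^j\circ h_i = 0$ established earlier.
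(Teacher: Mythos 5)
Your proposal correctly isolates the easy half of the statement: since $\Phi=d\eta^i$, the skew-symmetric part of the bilinear form $g(\nabla_X\xi_i,Y)$ is $-g(fX,Y)$, which accounts for the term $f$ in $C_{\xi_i}$, and you rightly anticipate that $h_i^*$ (not $h_i$) and a factor $Q^{-1}$ must appear. However, the core of the theorem is the identification of the \emph{self-adjoint} part of $\nabla\xi_i$, and the tools you propose cannot produce it. The identity $2\,d\eta^i(X,Y)=g(\nabla_X\xi_i,Y)-g(\nabla_Y\xi_i,X)$ sees only the skew part, and \eqref{4.2} combined with \eqref{3.1AA} only yields, after writing $\nabla_X\xi_i=-fX+S_iX$ with $S_i$ self-adjoint, the relation $2h_i=\nabla_{\xi_i}f+fS_i-S_if$. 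This determines the commutator $[f,S_i]$ but not $S_i$ itself: any self-adjoint operator commuting with $f$ (for instance a multiple of $Q$) can be added to $S_i$ without violating these relations, so your plan ``solve for the ${\cal D}$-component of $\nabla_X\xi_i$'' stalls exactly at the point where the theorem's content lies; the displayed formula you say you ``expect to show'' is the statement itself, not a consequence of the steps sketched. The suggestion to differentiate $\Phi=d\eta^i$ covariantly does not help either, since $\nabla(d\eta^i)$ involves second derivatives of $\xi_i$ (curvature terms), not the first-order data needed here.

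The missing idea (and the natural proof, which the survey does not reproduce but cites from the splitting-tensor paper) is to evaluate \eqref{3.1A} at $Y=\xi_i$ rather than $X=\xi_i$, using $(\nabla_Xf)\xi_i=-f\nabla_X\xi_i$ (differentiate $f\xi_i=0$), the computation $[f,f](\xi_i,Z)=-2fh_iZ$ inside ${\cal N}^{\,(1)}$, the special value ${\cal N}^{\,(5)}(X,\xi_i,Z)=2\,g(h_iZ,\widetilde QX)$ from \eqref{KK}, together with \eqref{2.2} and $\eta^j\circ h_i=0$. This gives, for $X\in{\cal D}$,
\begin{equation*}
 g(f\nabla_X\xi_i,Z)=g(QX+h_i^*X,\,Z),
\end{equation*}
i.e. $f\nabla_X\xi_i=QX+h_i^*X$. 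Since $g(\nabla_X\xi_i,\xi_j)=0$, one may apply $f$ and use $f^2=-Q$ on ${\cal D}$ (from \eqref{2.1}) and $[Q,f]=0$ to invert, obtaining $\nabla_X\xi_i=-fX-Q^{-1}fh_i^*X$ and hence $C_{\xi_i}=f+Q^{-1}fh_i^*$; this is also where the factor $Q^{-1}$ genuinely comes from (inverting $f^2=-Q$), not from the metric relation $g(fX,fY)=g(X,QY)$ as you suggest. As it stands, your text is a plausible plan with the correct target formula, but it omits the one computation that proves it, so it does not yet constitute a proof.
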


The \textit{mixed scalar curvature} of an
 almost product manifold
$M({\cal D}_1,{\cal D}_2,g)$
is the function
\begin{equation*}
 {r}_{\rm mix} =\sum\nolimits_{a,i} g({R}_{E_a, {\cal E}_i}\,E_a, {\cal E}_i),
\end{equation*}
where $\{{\cal E}_i,\,E_a\}$ is an adapted orthonormal frame,
i.e., $\{E_a\}\subset{\cal D}_1$ and $\{{\cal E}_i\}\subset{\cal D}_2$.
Let $b_i$ and $H_i$ be the second fundamental form and the mean curvature vector, and $T_i$ be the integrability tensor of the distribution ${\cal D}_i$.
The following
formula:
\begin{equation}\label{E-PW}
 {r}_{\rm mix}={\rm div}(H_1 + H_2) -\|b_1\|^2-\|b_2\|^2+\|H_1\|^2 +\|H_2\|^2 +\|T_1\|^2 +\|T_2\|^2,
\end{equation}
has many applications in Riemannian,
K\"{a}hler and Sasakian geometries, see \cite{Rov-Wa-2021}.

\begin{Theorem}
For the weak almost ${\cal S}$-structure
on a closed manifold $M^{2n+s}$ with conditions
\begin{equation*}
 (a)~\pounds_{\xi_i}\,\widetilde{Q} =0,\qquad
 (b)~N^{(5)}(\xi_i,\,\cdot\,, \,\cdot) =0,
\end{equation*}
(satisfied by weak $f$-K-manifolds)
the following integral formula is true:
\begin{eqnarray}\label{E-k-PW}
 \int_M \Big\{\sum\nolimits_{\,i} \big({\rm Ric}(\xi_i,\xi_i)  + \|Q^{-1} f h_i\|^2
 - (\tr Q^{-1} f h_i)^2\,\big) - s\,\|f\|^2 \Big\}\,d\,{\rm vol} = 0.
\end{eqnarray}
\end{Theorem}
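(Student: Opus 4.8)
The plan is to derive \eqref{E-k-PW} as a specialization of the general mixed scalar curvature formula \eqref{E-PW} applied to the almost product structure $({\cal D},{\cal D}^\bot,g)$ of the weak almost ${\cal S}$-manifold. First I would identify each of the seven terms on the right-hand side of \eqref{E-PW} in our setting. Since ${\cal D}^\bot$ is tangent to a totally geodesic foliation with flat leaves (Corollary~\ref{thm6.2}), the second fundamental form $b_2$ and mean curvature $H_2$ of ${\cal D}^\bot$ vanish identically; moreover $[\xi_i,\xi_j]=0$ gives $T_2=0$ as well, so the entire ${\cal D}^\bot$-contribution to \eqref{E-PW} disappears. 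For ${\cal D}$, the splitting tensor identity $C_{\xi_i}=f+Q^{-1}fh_i^*$ (the preceding Theorem) encodes both $b_1$ and $T_1$: the self-adjoint part $A^\sharp_{\xi_i}$ gives $b_1$ restricted to the $\xi_i$-direction and the skew part $T^\sharp_{\xi_i}$ gives $T_1$. Under hypothesis (b), $N^{(5)}(\xi_i,\cdot,\cdot)=0$, Proposition~\ref{P-4.1} yields $h_i=h_i^*$, so $C_{\xi_i}=f+Q^{-1}fh_i$; I would then split this into its symmetric and skew-symmetric parts to read off $\|b_1\|^2$, $\|H_1\|^2$, and $\|T_1\|^2$ as sums over $i$ of expressions in $f$ and $Q^{-1}fh_i$.

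Next I would compute $r_{\rm mix}$ intrinsically. By definition $r_{\rm mix}=\sum_{a,i}g(R_{E_a,\xi_i}E_a,\xi_i)$, and summing over the adapted frame for ${\cal D}$ relates this to $\sum_i{\rm Ric}(\xi_i,\xi_i)$ plus a correction coming from the ${\cal D}^\bot$-directions; because the leaves of ${\cal D}^\bot$ are flat and totally geodesic, $\sum_{j}g(R_{\xi_j,\xi_i}\xi_j,\xi_i)=0$, so in fact $r_{\rm mix}=\sum_i{\rm Ric}(\xi_i,\xi_i)$. The term $s\,\|f\|^2$ in \eqref{E-k-PW} should emerge from the ``$f$''-part of $C_{\xi_i}$ when combining $-\|b_1\|^2+\|H_1\|^2+\|T_1\|^2$: writing $C_{\xi_i}=f+K_i$ with $K_i:=Q^{-1}fh_i$, the cross terms between $f$ and $K_i$ must be handled carefully using skew-symmetry of $f$, self-adjointness of $Q$, and the anticommutation-type relations from Proposition~\ref{P-4.1}; I expect $\|f\|^2$ (summed $s$ times) to survive as the leading piece while the cross terms and the pure-$K_i$ terms reorganize into $\|Q^{-1}fh_i\|^2-(\tr Q^{-1}fh_i)^2$. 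Here hypothesis (a), $\pounds_{\xi_i}\widetilde Q=0$ (equivalently $\pounds_{\xi_i}Q=0$), is what makes $h_i$ self-adjoint \emph{and} trace-controlled via Proposition~\ref{P-4.1}, and also ensures the $f$-cross-terms are trace-free so only $(\tr K_i)^2$ appears and not $(\tr f)(\tr K_i)$ (note $\tr f=0$ anyway by skew-symmetry).

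Finally I would integrate \eqref{E-PW} over the closed manifold $M^{2n+s}$: the divergence term $\int_M{\rm div}(H_1+H_2)\,d\,{\rm vol}$ vanishes by the divergence theorem, leaving exactly the claimed integral identity \eqref{E-k-PW} after substituting the term-by-term identifications above. The main obstacle I anticipate is the bookkeeping in the second paragraph's computation: correctly decomposing $C_{\xi_i}=f+Q^{-1}fh_i$ into symmetric/skew parts, matching these to $b_1$, $H_1$, $T_1$ (which involve the \emph{full} operator $-(\nabla_\bullet\xi_i)^\top$ and a priori also depend on whether $Q^{-1}fh_i$ is itself symmetric or skew — it need not be either), and verifying that all cross terms between the ``$f$'' piece and the ``$Q^{-1}fh_i$'' piece telescope into the stated form using the structural identities of Proposition~\ref{P-4.1} together with the compatibility relation \eqref{2.2}. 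Getting the coefficient of $\|f\|^2$ to be exactly $s$ (and not, say, off by the contribution of $Q$ versus $\mathrm{id}$ on ${\cal D}$) will require using \eqref{2.2} to express $\|f\|^2$ and the relevant traces consistently.
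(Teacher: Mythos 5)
Your proposal follows essentially the same route as the paper: apply \eqref{E-PW} with ${\cal D}_1={\cal D}$, ${\cal D}_2={\cal D}^\bot$ (so $b_2=H_2=T_2=0$ by Corollary~\ref{thm6.2}), read off $b_1$, $H_1$, $T_1$ from the splitting tensor $C_{\xi_i}=f+Q^{-1}fh_i$ using (b) to get $h_i=h_i^*$, note ${r}_{\rm mix}=\sum_i{\rm Ric}(\xi_i,\xi_i)$, and integrate over the closed manifold so the divergence term drops out. The cross-term bookkeeping you flag as the main obstacle in fact dissolves: under (a) and (b) the operator $Q^{-1}fh_i$ is self-adjoint while $f$ is skew, so one gets directly $T_1(X,Y)=g(fX,Y)\,\bar\xi$ and $b_1(X,Y)=\sum_i g(Q^{-1}fh_iX,Y)\,\xi_i$, whence $\|T_1\|^2=s\,\|f\|^2$, $\|b_1\|^2=\sum_i\|Q^{-1}fh_i\|^2$ and $\|H_1\|^2=\sum_i(\tr Q^{-1}fh_i)^2$, exactly as in the paper's proof.
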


\begin{proof}
According to \eqref{E-PW}, set ${\cal D}_1={\cal D}$ and ${\cal D}_2={\cal D}^\bot$.
Then $b_2=H_2=T_2=0$ and
\[
 b_1(X,Y)=\sum\nolimits_{\,i} g(Q^{-1} f h_i X, Y)\,\xi_i,\ \
 H_1=\sum\nolimits_{\,i} \tr (Q^{-1} f h_i)\,\xi_i,\ \
 T_1(X,Y)= g(f X, Y)\,\overline\xi ,
\]
where $\overline\xi = \sum\nolimits_{\,i} \xi_i$.
For a weak almost ${\cal S}$-manifold we have
 ${r}_{\rm mix}=\sum\nolimits_{\,i} {\rm Ric}(\xi_i,\xi_i)$.
Thus, \eqref{E-k-PW} is the counterpart of \eqref{E-PW} integrated on a closed
manifold using the Divergence Theorem.
\end{proof}

\begin{Definition}[see \cite{rov-126}]
\rm
An even-dimensional Riemannian manifold $(\tilde M,\tilde g)$ equipped with a skew-symmetric $(1,1)$-tensor $J$
such that $J^2$ is negative-definite is called a \textit{weak Hermitian manifold}.
This manifold is called \textit{weak K\"{a}hlerian} if $\tilde{\nabla} J=0$, where $\tilde\nabla$ is the Levi-Civita connection of $\tilde g$.
\end{Definition}

An involutive distribution is \textit{regular} if every point of the manifold has a neighborhood such that any integral submanifold passing through the neighborhood passes through only once, see, for example,~\cite{Fitz-2011}.
The next theorem states that a compact manifold with a regular weak almost ${\cal S}$-structure is a principal torus bundle over a weak Hermitean manifold,
and we believe that its proof using Proposition~\ref{thm6.2A} is similar to the proof of \cite[Theorem~4.2]{Fitz-2011}.

\begin{Theorem}
Let $M^{2n+s}$ be a compact manifold
equipped with a regular weak almost ${\cal S}$-structure $(f,Q,\tilde\xi_i,\tilde\eta^i,\tilde g)$.
Then there exists a weak almost ${\cal S}$-structure $(f,Q,\xi_i,\eta^i,g)$ on $M$ for which the structure vector fields
$\xi_1,\ldots,\xi_s$ are the infinitesimal generators of a free and effective $\mathbb{T}^s$-action on~$M$.
Moreover, the quotient $N = M / \mathbb{T}^s$ is a smooth weak Hermitean manifold of dimension~$2n$.
\end{Theorem}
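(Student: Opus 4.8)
The plan is to mimic the classical argument of Blair's regularity theorem for $\mathcal{S}$-manifolds (and its $s=1$ version, the Boothby--Wang fibration, as refined in \cite[Theorem~4.2]{Fitz-2011}), checking at each step that the weakness of the structure (i.e.\ $\widetilde Q\neq 0$) does not obstruct it. First I would invoke Proposition~\ref{thm6.2A}: for a weak almost $\mathcal{S}$-structure the tensors $\mathcal{N}^{(2)}_i$ and $\mathcal{N}^{(4)}_{ij}$ vanish, and in particular $\pounds_{\xi_i}\eta^j=0$; together with $[\xi_i,\xi_j]=0$ (established in Section~\ref{sec:01}) and $d\eta^i=\Phi$ this shows the $\xi_i$ generate a locally free $\mathbb{R}^s$-action preserving the whole structure $(f,Q,\tilde\eta^i,\tilde g)$. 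Regularity of the distribution $\mathcal{D}^\bot$ (which is tangent to the orbits, since $\mathcal{D}^\bot=\mathrm{span}\{\xi_i\}$) means each orbit is an embedded closed submanifold; compactness of $M$ then forces the orbits to be compact, hence tori, and a standard averaging argument over the orbit closures upgrades the $\mathbb{R}^s$-action to a free effective $\mathbb{T}^s$-action. The averaging may change the structure slightly, producing the new data $(f,Q,\xi_i,\eta^i,g)$ claimed in the statement, with the $\xi_i$ now the fundamental vector fields of the torus action; one must check that $f$ and $Q$ are unchanged because they are already $\mathbb{T}^s$-invariant (they annihilate and commute with $\mathcal{D}^\bot$), while $\eta^i$ and $g$ are replaced by their torus-averages, still satisfying \eqref{2.1}, \eqref{2.2} and \eqref{2.3}.

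Next I would form the quotient $N = M/\mathbb{T}^s$, which by the slice theorem for free actions of compact Lie groups is a smooth manifold of dimension $2n$, with $\pi\colon M\to N$ a principal $\mathbb{T}^s$-bundle. The remaining task is to descend the transverse structure. Since $f$ vanishes on $\mathcal{D}^\bot$, preserves $\mathcal{D}$, and is $\mathbb{T}^s$-invariant, it projects to a $(1,1)$-tensor $J$ on $N$; likewise $Q|_{\mathcal{D}}$ descends to a nonsingular self-adjoint tensor $\bar Q$ on $N$, and $g|_{\mathcal{D}}$ descends to a Riemannian metric $\tilde g$ on $N$ (this uses that $\mathcal{D}$ is exactly the horizontal distribution of the connection $\{\eta^i\}$ and that the Lie derivatives $\pounds_{\xi_i}$ of $f,Q|_{\mathcal{D}},g|_{\mathcal{D}}$ vanish — the latter for $g$ because the $\xi_i$ are Killing). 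From \eqref{2.2} restricted to $\mathcal{D}$ one reads off $\tilde g(JX,JY)=\tilde g(X,\bar QY)$, so $J$ is skew-symmetric with respect to $\tilde g$ and $J^2=-\bar Q$ is negative-definite: this is precisely a weak Hermitian structure on $N$ in the sense of the Definition preceding the theorem.

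The main obstacle I anticipate is not the topology — the passage from a locally free $\mathbb{R}^s$-action to a $\mathbb{T}^s$-action under regularity and compactness is classical — but rather the bookkeeping needed to guarantee that the \emph{averaged} structure is still a weak almost $\mathcal{S}$-structure, i.e.\ that averaging $\eta^i$ and $g$ over the torus does not destroy the algebraic identities \eqref{2.1}--\eqref{2.2} or the condition $\Phi=d\eta^i$. The safest route is to observe that $f$, $Q$, and the splitting $TM=\mathcal{D}\oplus\mathcal{D}^\bot$ are already $\mathbb{T}^s$-invariant after the first step, so $\Phi=g(\cdot,f\cdot)$ is automatically invariant once $g$ is; and that $d$ commutes with the averaging operator, so $\Phi=d\eta^i$ is preserved; one then checks $\eta^i(\xi_j)=\delta^i_j$ survives averaging because each $\xi_j$ is $\mathbb{T}^s$-invariant and tangent to the orbits. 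A secondary technical point is verifying smoothness of $J,\bar Q,\tilde g$ on $N$, which follows from the local triviality of $\pi$ and the basic (horizontal, invariant) nature of the projected tensors. With these checks in place the theorem follows; a detailed verification is parallel to \cite[Theorem~4.2]{Fitz-2011} with the complex structure replaced throughout by the weak Hermitian tensor $J$ and the identity $Q={\rm id}$ replaced by the nonsingular tensor $Q$.
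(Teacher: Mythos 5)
Your overall route---adapting \cite[Theorem~4.2]{Fitz-2011} with the help of Proposition~\ref{thm6.2A}---is exactly the route the paper itself indicates (the paper offers no further detail than that pointer), but your sketch has a genuine gap at the descent/averaging step. You justify keeping $f$ and $Q$ by claiming they ``are already $\mathbb{T}^s$-invariant (they annihilate and commute with ${\cal D}^\bot$)'', and you let $g|_{\cal D}$ descend ``because the $\xi_i$ are Killing''. Neither holds for a general weak almost ${\cal S}$-structure: $\pounds_{\xi_i}f=2h_i={\cal N}^{\,(3)}_i$ is in general nonzero, and by Proposition~\ref{thm6.2A} it vanishes \emph{precisely when} $\xi_i$ is Killing, i.e.\ in the weak $f$-K-contact case, which is not assumed here; the same remark applies to $\pounds_{\xi_i}Q$ and to $\pounds_{\xi_i}(g|_{\cal D})$. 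Consequently $f$ need not be projectable to the quotient, and the averaging step is also affected: if you average $\eta^i$ and $g$ while keeping $f$ and $Q$ fixed, the compatibility \eqref{2.2}, the skew-symmetry of $f$ with respect to the averaged metric, and hence $\Phi=d\eta^i$, are no longer automatic, because averaging $g(\cdot,f\,\cdot)$ commutes with $f$ only if $f$ is invariant. So the ``bookkeeping'' you defer is exactly the point where the argument, as written, breaks.

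The gap can be closed without any invariance of $f$. Proposition~\ref{thm6.2A} gives ${\cal N}^{\,(4)}_{ij}=\pounds_{\xi_i}\eta^j=0$, and Cartan's formula together with $\iota_{\xi_i}\Phi=0$ and $d\Phi=0$ gives $\pounds_{\xi_i}\Phi=0$; hence $\eta^1,\ldots,\eta^s$ and $\Phi=d\eta^i$ are basic for the torus action obtained from regularity and compactness, $\eta=(\eta^1,\ldots,\eta^s)$ is a connection form, and $\Phi$ descends to a closed $2$-form $\Omega$ on $N=M/\mathbb{T}^s$ which is nondegenerate because $\Phi$ is nondegenerate on ${\cal D}$. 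Now take \emph{any} Riemannian metric $\bar g$ on $N$ (for instance the projection of a $\mathbb{T}^s$-averaged metric restricted to ${\cal D}$) and define $J$ by $\Omega(X,Y)=\bar g(X,JY)$: then $J$ is $\bar g$-skew-symmetric and $\bar g(J^2X,Y)=-\bar g(JX,JY)$ is negative definite, so $(N,\bar g,J)$ is weak Hermitian in the sense of the paper's definition---no projection of $f$ or $Q$ is needed, since the weak notion asks only for $J^2$ negative definite rather than $J^2=-{\rm id}$. With this replacement (and the classical period/rescaling argument you cite for upgrading the $\mathbb{R}^s$-action to a free $\mathbb{T}^s$-action, which indeed only uses the invariance of the $\eta^i$ and $\Phi$), your outline matches the proof the paper has in mind.
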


\section{Geometry of weak ${\cal S}$--manifolds and weak ${\cal C}$-manifolds}
\label{sec:3cs}

By direct calculation we get the following:
\begin{align}\label{3.9A}
 (\pounds_{\xi_i}\,\Phi)(X,Y) = (\pounds_{\xi_i}\,g)(X, {f}Y) + g(X,(\pounds_{\xi_i}{f})Y) .
\end{align}
The following result generalizes \cite[Theorem~1.1]{b1970}.

\begin{Theorem}
On a weak ${\cal K}$-manifold the structure vector fields $\xi_1,\ldots,\xi_s$ are Killing and
\begin{align}\label{6.1e}
 \nabla_{\xi_i}\,\xi_j  = 0,\quad 1\le i,j\le s ;
\end{align}
thus, the
distribution ${\cal D}^\bot$ is tangent to 
a totally geodesic Riemannian foliation with flat leaves.
\end{Theorem}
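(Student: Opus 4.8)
The plan is to reduce the theorem to facts already assembled in the excerpt and to a short self-contained computation. Recall that a weak $\mathcal{K}$-manifold is by definition normal ($\mathcal{N}^{(1)}=0$) and satisfies $d\Phi=0$. First I would feed these into Proposition~\ref{lem6.0}: since $d\Phi=0$ and $\mathcal{N}^{(1)}=0$, the formula \eqref{3.1-new} collapses to
\begin{align*}
 2\,g((\nabla_X f)Y,Z) = \sum\nolimits_i\big(\mathcal{N}^{(2)}_i(Y,Z)\,\eta^i(X) + 2\,d\eta^i(fY,X)\,\eta^i(Z) - 2\,d\eta^i(fZ,X)\,\eta^i(Y)\big) + \mathcal{N}^{(5)}(X,Y,Z).
\end{align*}
By Proposition~\ref{thm6.1} (normality) we have $\mathcal{N}^{(3)}_i=\mathcal{N}^{(4)}_{ij}=0$ and $\mathcal{N}^{(2)}_i(Y,Z)=\eta^i([\widetilde Q Y, fZ])$; moreover $\nabla_{\xi_i}\xi_j\in\mathcal{D}$ and $[X,\xi_i]\in\mathcal{D}$ for $X\in\mathcal{D}$, and $\mathcal{D}^\bot$ is totally geodesic.

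Next I would show $\xi_i$ is Killing. The cleanest route: compute $(\pounds_{\xi_i}g)(X,Y)=g(\nabla_X\xi_i,Y)+g(\nabla_Y\xi_i,X)$ and relate it to $(\pounds_{\xi_i}\Phi)$ via \eqref{3.9A}, namely $(\pounds_{\xi_i}\Phi)(X,Y)=(\pounds_{\xi_i}g)(X,fY)+g(X,(\pounds_{\xi_i}f)Y)$. Since on a weak $\mathcal{K}$-manifold $d\Phi=0$ and all $\mathcal{N}^{(4)}_{ij}=0$ (so $\pounds_{\xi_i}\eta^j=0$), one gets $\pounds_{\xi_i}\Phi=0$; combined with $\mathcal{N}^{(3)}_i=\pounds_{\xi_i}f=0$ (normality) this forces $(\pounds_{\xi_i}g)(X,fY)=0$. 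Then one handles the two components: for $X,Y\in\mathcal{D}$ the tensor $fY$ ranges over all of $\mathcal{D}$ (since $f$ is nonsingular on $\mathcal{D}$), giving $(\pounds_{\xi_i}g)|_{\mathcal{D}\times\mathcal{D}}=0$; for the mixed and $\mathcal{D}^\bot$-directions one uses $[\xi_i,\xi_j]=0$ together with $[X,\xi_i]\in\mathcal{D}$ and $g(\nabla_X\xi_j,\xi_i)=0$ (shown in the excerpt for weak almost $\mathcal{S}$-manifolds; here the normality relations \eqref{Eq-normal-2} play the analogous role). Assembling the pieces yields $\pounds_{\xi_i}g=0$, i.e. each $\xi_i$ is Killing.

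For \eqref{6.1e}: being Killing gives $g(\nabla_X\xi_i,Y)+g(\nabla_Y\xi_i,X)=0$; putting $X=Y=\xi_j$ gives $g(\nabla_{\xi_j}\xi_i,\xi_j)=0$, and more generally, since $\nabla_{\xi_i}\xi_j\in\mathcal{D}$ by \eqref{Eq-normal-2}, polarizing in $i,j$ and using $[\xi_i,\xi_j]=0$ (which holds because $\mathcal{D}^\bot$ is involutive and $\nabla_{\xi_i}\xi_j-\nabla_{\xi_j}\xi_i=[\xi_i,\xi_j]$ lies in $\mathcal{D}$ while also being in $\mathcal{D}^\bot$) forces $\nabla_{\xi_i}\xi_j=-\nabla_{\xi_j}\xi_i$ together with $\nabla_{\xi_i}\xi_j\in\mathcal{D}^\bot$, hence $\nabla_{\xi_i}\xi_j=0$. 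This is exactly the argument sketched in \eqref{2-xi} of the excerpt, now valid because all the needed vanishing tensors are available from normality plus $d\Phi=0$. From $\nabla_{\xi_i}\xi_j=0$ the leaves of $\mathcal{D}^\bot$ are totally geodesic (the $\xi_i$ are parallel along $\mathcal{D}^\bot$), flat (since $R_{\xi_i,\xi_j}\xi_k=\nabla_{\xi_i}\nabla_{\xi_j}\xi_k-\nabla_{\xi_j}\nabla_{\xi_i}\xi_k-\nabla_{[\xi_i,\xi_j]}\xi_k=0$), and because the $\xi_i$ are Killing the foliation is Riemannian; involutivity of $\mathcal{D}^\bot$ follows from $d\Phi=0$ as noted after \eqref{E-3.3}.

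The main obstacle I anticipate is the Killing step: showing $(\pounds_{\xi_i}g)(X,Y)=0$ when one or both arguments lie in $\mathcal{D}^\bot$ is not immediate from $(\pounds_{\xi_i}g)(X,fY)=0$ alone, because $fY$ kills the $\mathcal{D}^\bot$-component of $Y$. One must instead exploit $\mathcal{N}^{(4)}_{ij}=0$ directly (which gives $g(\nabla_X\xi_j,\xi_i)+g(\nabla_{\xi_i}X,\xi_j)$-type relations) and the normality consequence $\nabla_{\xi_i}\xi_j\in\mathcal{D}$, then symmetrize — exactly the manipulation carried out in the excerpt just before \eqref{2-xi}. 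So the delicate bookkeeping is making sure every ``mixed'' term is controlled by one of $\mathcal{N}^{(3)}_i=0$, $\mathcal{N}^{(4)}_{ij}=0$, or $[\xi_i,\xi_j]=0$, none of which individually covers all directions but which together do.
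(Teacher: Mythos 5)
Your proposal follows essentially the same route as the paper's proof: Cartan's formula and $d\Phi=0$ give $\pounds_{\xi_i}\Phi=0$, which together with ${\cal N}^{\,(3)}_i=0$ and \eqref{3.9A} yields $(\pounds_{\xi_i}g)(\cdot,f\cdot)=0$; the remaining ${\cal D}^\bot$-directions are controlled by the normality relations \eqref{Eq-normal}--\eqref{Eq-normal-2}, and \eqref{6.1e} follows from $\nabla_{\xi_i}\xi_j\in{\cal D}$ combined with $[\xi_i,\xi_j]=0$ (involutivity of ${\cal D}^\bot$ coming from $d\Phi=0$), exactly as in the paper. Apart from a harmless slip in wording (the symmetrization argument gives $\nabla_{\xi_i}\xi_j+\nabla_{\xi_j}\xi_i=0$, not ``$\nabla_{\xi_i}\xi_j\in{\cal D}^\bot$''), the argument is correct and matches the paper's.
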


\begin{proof}
By Proposition~\ref{thm6.1}, ${\cal D}^\bot$ is totally geodesic
and ${\cal N}^{\,(3)}_i{=}\,\pounds_{\xi_i}{f}=0$.
Using $\iota_{\,{\xi_i}}\Phi=0$ and condition $d\Phi=0$ in the identity
 $\pounds_{\xi_i}=\iota_{\,{\xi_i}}\,d + d\,\iota_{\,{\xi_i}}$,
we get $\pounds_{\xi_i}\Phi=0$. Thus, from \eqref{3.9A} we obtain $(\pounds_{\xi_i}\,g)(X, {f}Y)=0$.
To show $\pounds_{\xi_i}\,g=0$, we will examine $(\pounds_{\xi_i}\,g)(fX, \xi_j)$ and $(\pounds_{\xi_i}\,g)(\xi_k, \xi_j)$.
Using $\pounds_{\xi_i}\,\eta^j =0$,
we get $(\pounds_{\xi_i}\,g)(fX, \xi_j)=(\pounds_{\xi_i}\,\eta^j)fX -g(fX, [\xi_i,\xi_j])=-g(fX, [\xi_i,\xi_j])=0$.
Next, using Proposition~\ref{thm6.1},
we get
\[
 (\pounds_{\xi_i}\,g)(\xi_k, \xi_j)= -g(\xi_i, \nabla_{\xi_k}\,\xi_j+\nabla_{\xi_j}\,\xi_k) = 0.
\]
Thus, $\xi_i$ is a Killing vector field, i.e., $\pounds_{\xi_i}\,g=0$.
From $d\Phi(X,\xi_i,\xi_j)=0$ and \eqref{E-3.3} we obtain $g([\xi_i,\xi_j], fX)=0$, i.e., $\ker f$ is integrable.
Finally, from this and Proposition~\ref{thm6.1}
we get \eqref{6.1e}; thus, the sectional curvature $K(\xi_i,\xi_j)$ vanishes.
\end{proof}


\begin{Proposition}
For a weak ${\cal S}$-structure $(f,Q,\xi_i,\eta^i,g)$ we get
\begin{align}\label{4.10}
\notag
 & g((\nabla_{X}{f})Y,Z) = g(QX,Y)\,\bar\eta(Z) - g(QX,Z)\,\bar\eta(Y) \notag\\
 & +\sum\nolimits_{\,j} \eta^j(X)\big(\bar\eta(Y)\,\eta^j(Z) - \eta^j(Y)\,\bar\eta(Z)\big)
 +\frac{1}{2}\, {\cal N}^{\,(5)}(X,Y,Z).
\end{align}
Moreover, $\xi_i$ are Killing vector fields and ${\cal D}^\bot$ is tangent to a Riemannian totally geodesic foliation.
\end{Proposition}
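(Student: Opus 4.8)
The plan is to derive the formula for $g((\nabla_X f)Y,Z)$ by specializing Proposition~\ref{lem6.0} to a weak ${\cal S}$-structure, and then to get the Killing and totally-geodesic statements as consequences of the preceding Theorem on weak ${\cal K}$-manifolds (since every weak ${\cal S}$-manifold is in particular a weak ${\cal K}$-manifold, being normal with $d\Phi=0$).

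First I would start from the master identity \eqref{3.1-new}. For a weak ${\cal S}$-structure the normality condition gives ${\cal N}^{\,(1)}=0$, so that term drops. Since $\Phi=d\eta^1=\dots=d\eta^s$ and all $\eta^i$ have $d\eta^i=\Phi$, we may substitute $d\eta^i({f}Y,X)=\Phi({f}Y,X)=g({f}Y,{f}X)$ wherever it appears; more importantly we need $d\Phi$. Here the key computational point is that for a weak ${\cal S}$-structure $d\Phi=0$ need \emph{not} be enough — rather we already proved in Proposition~\ref{lem6.1} that for a weak almost ${\cal S}$-structure formula \eqref{3.1A} holds, i.e. $2g((\nabla_X f)Y,Z)=g({\cal N}^{\,(1)}(Y,Z),fX)+2g(fX,fY)\bar\eta(Z)-2g(fX,fZ)\bar\eta(Y)+{\cal N}^{\,(5)}(X,Y,Z)$. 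So the cleanest route is to \emph{begin} from \eqref{3.1A}, not from \eqref{3.1-new}. Then I would kill ${\cal N}^{\,(1)}$ using normality, and rewrite $g(fX,fY)$ via the compatibility identity \eqref{2.2}: $g(fX,fY)=g(X,QY)-\sum_j\eta^j(X)\eta^j(Y)=g(QX,Y)-\sum_j\eta^j(X)\eta^j(Y)$, using that $Q$ is self-adjoint. Plugging this in for both $g(fX,fY)$ and $g(fX,fZ)$ and regrouping gives exactly the right-hand side of \eqref{4.10}: the $g(QX,Y)\bar\eta(Z)-g(QX,Z)\bar\eta(Y)$ terms plus the $\sum_j\eta^j(X)(\bar\eta(Y)\eta^j(Z)-\eta^j(Y)\bar\eta(Z))$ terms plus $\tfrac12{\cal N}^{\,(5)}$. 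The factor $\tfrac12$ in front of ${\cal N}^{\,(5)}$ just comes from dividing \eqref{3.1A} by $2$.

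For the second assertion, I would invoke the Theorem on weak ${\cal K}$-manifolds directly: a weak ${\cal S}$-manifold is normal with $d\Phi=0$, hence a weak ${\cal K}$-manifold, so its structure vector fields $\xi_i$ are Killing, $\nabla_{\xi_i}\xi_j=0$, and ${\cal D}^\bot$ is tangent to a totally geodesic Riemannian foliation with flat leaves. Nothing new needs to be done here beyond citing the inclusion of classes displayed in the diagram; one could also note that \eqref{4.10} with $X=Y=\xi_i$ and the vanishing of ${\cal N}^{\,(5)}(\xi_i,\xi_i,\cdot)$ from \eqref{KK} recovers $\nabla_{\xi_i}f=0$ on the relevant arguments, reconfirming consistency.

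I expect the only real obstacle to be bookkeeping: making sure that when $Y$ or $Z$ lies in ${\cal D}^\bot$ the $\eta^j$-terms combine correctly, and that the asymmetry in $Y,Z$ of ${\cal N}^{\,(5)}$ matches the asymmetry of the $\bar\eta$-terms — but since \eqref{3.1A} is already established and ${\cal N}^{\,(5)}$ is by definition skew in $(Y,Z)$, this is a routine regrouping rather than a genuine difficulty. The substitution $g(fX,fY)=g(QX,Y)-\sum_j\eta^j(X)\eta^j(Y)$ is the one nontrivial structural input, and it is precisely what produces the $g(QX,\cdot)$ (rather than $g(X,\cdot)$) and the extra $\eta^j\otimes\eta^j$ correction that distinguish the weak case from the classical statement \cite[Proposition~2.1]{DIP-2001}.
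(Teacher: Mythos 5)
Your proposal is correct and is essentially the argument the paper intends (the Proposition is stated there without a written-out proof): specialize \eqref{3.1A} using normality to drop ${\cal N}^{\,(1)}$, rewrite $g(fX,fY)$ and $g(fX,fZ)$ via \eqref{2.2} together with the self-adjointness of $Q$, and divide by $2$ to obtain \eqref{4.10}. The Killing and totally geodesic Riemannian foliation claims indeed follow by citing the weak ${\cal K}$-manifold theorem, since a weak ${\cal S}$-structure is by definition a normal weak metric $f$-structure with $d\Phi=0$.
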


The following theorem claims the rigidity of the ${\cal S}$-structure.

\begin{Theorem}\label{T-4.1A}
A weak metric $f$-structure
is a weak ${\cal S}$-structure if and only if it is an ${\cal S}$-structure.
\end{Theorem}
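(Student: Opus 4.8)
The plan is to show that the ``small'' tensor $\widetilde Q=Q-{\rm id}_{TM}$ vanishes identically for a weak ${\cal S}$-structure, whence $Q={\rm id}_{TM}$ and the structure reduces to a classical ${\cal S}$-structure; the converse is trivial since an ${\cal S}$-structure is a weak ${\cal S}$-structure with $Q={\rm id}_{TM}$. The idea is to exploit equation \eqref{4.10} of the preceding Proposition, which expresses $\nabla f$ in terms of $Q$ and ${\cal N}^{(5)}$, and combine it with algebraic constraints coming from $f^{3}+fQ=0$ and the compatibility relation \eqref{2.2}. Since $\widetilde Q$ is self-adjoint, commutes with $f$, kills each $\xi_i$, and annihilates $\eta^i$, it is determined by its restriction to ${\cal D}$, where it is a self-adjoint positive-definite operator commuting with $f$.

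First I would restrict \eqref{4.10} to $X,Y,Z\in{\cal D}$, so that the $\bar\eta$-terms drop out and one is left with $2\,g((\nabla_X f)Y,Z)={\cal N}^{(5)}(X,Y,Z)$ on ${\cal D}$; in particular $(\nabla_{\xi_i}f)Y$ is controlled by ${\cal N}^{(5)}(\xi_i,Y,Z)$ via \eqref{3.1AA}. Next I would use the normality condition ${\cal N}^{(1)}=0$ together with Proposition~\ref{thm6.1}: for a weak ${\cal S}$-structure the normality identity \eqref{Eq-normal} gives ${\cal N}^{(2)}_i(X,Y)=\eta^i([\widetilde QX,fY])$, while for a weak almost ${\cal S}$-structure Proposition~\ref{thm6.2A} forces ${\cal N}^{(2)}_i=0$; combining these yields $\eta^i([\widetilde QX,fY])=0$ for all $X,Y$, which is a first linear constraint tying $\widetilde Q$ to the Lie bracket structure. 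I would then bring in the relation $\Phi=d\eta^i$: differentiating the compatibility identity \eqref{2.2} along $\xi_i$ and using that the $\xi_i$ are Killing (established in the Proposition just above Theorem~\ref{T-4.1A}) gives $\pounds_{\xi_i}Q=0$, hence by Proposition~\ref{P-4.1} the operators $h_i$ satisfy $h_i f+f h_i=0$ and $h_i Q=Q h_i$; moreover the condition $d\eta^i=\Phi$ should force each $h_i=\tfrac12\pounds_{\xi_i}f$ to vanish (this is where normality plus $\Phi=d\eta^i$ pin down $\nabla_{\xi_i}f$), so that ${\cal N}^{(5)}(\xi_i,\cdot,\cdot)=0$ and, by \eqref{KK}, the ``mixed'' components of ${\cal N}^{(5)}$ are severely restricted.

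The decisive step is then to feed these vanishings back into the defining formula for ${\cal N}^{(5)}(X,Y,Z)$. With $h_i=0$ one gets, from the second line of \eqref{KK}, $g([\xi_i,fZ],\widetilde QY)=g([\xi_i,fY],\widetilde QZ)$; combining with the covariant form and using $\nabla_{\xi_i}\xi_j=0$, $[\xi_i,\xi_j]=0$, and the Killing property, I expect to derive that $\widetilde Q$ is $\nabla$-parallel, i.e.\ $\nabla\widetilde Q=0$. A parallel self-adjoint operator commuting with $f$ has constant eigenvalues; evaluating \eqref{4.10} on an eigenvector of $\widetilde Q$ and comparing with the classical computation of $\nabla f$ on an ${\cal S}$-manifold (where $(\nabla_X f)Y=g(X,Y)\bar\xi-\bar\eta(Y)X+\dots$), the eigenvalue must equal the one forced by $f^{3}=-fQ$ on that eigenspace, namely $1$; hence $\widetilde Q|_{{\cal D}}=0$ and $Q={\rm id}_{TM}$.

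The main obstacle I anticipate is the step that upgrades the pointwise algebraic constraints on $\widetilde Q$ to the parallelism $\nabla\widetilde Q=0$: the formula for ${\cal N}^{(5)}$ mixes $\widetilde Q$ with first derivatives of $f$ and with Lie brackets, so showing that all the ``error terms'' cancel will require carefully exploiting that, on a weak ${\cal S}$-manifold, $\nabla_X\xi_i=-fX-(\text{something involving }h_i)$ and that $h_i=0$; without the vanishing of $h_i$ (equivalently, of ${\cal N}^{(5)}(\xi_i,\cdot,\cdot)$, equivalently $\pounds_{\xi_i}Q=0$) the argument does not close, so establishing that vanishing cleanly from $\Phi=d\eta^i$ and normality is the real crux.
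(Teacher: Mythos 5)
Your overall strategy (reduce the theorem to showing $\widetilde Q=0$; the converse being trivial) is the same as the paper's, and your preliminary deductions are correct consequences of normality: ${\cal N}^{\,(3)}_i=0$, hence $h_i=0$ and, by \eqref{KK}, ${\cal N}^{\,(5)}(\cdot\,,\xi_i,\cdot)={\cal N}^{\,(5)}(\cdot\,,\cdot\,,\xi_i)=0$; also $\pounds_{\xi_i}Q=0$ and $\eta^i([\widetilde QX,fY])=0$. But the decisive part of your argument is missing. You only say you ``expect'' to derive $\nabla\widetilde Q=0$ from the defining formula for ${\cal N}^{\,(5)}$, with no mechanism, and you yourself identify this as the crux; nothing you established delivers parallelism. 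Moreover, the final eigenvalue-pinning step is invalid as stated: the identity $f^3+fQ=0$ holds for \emph{every} framed weak $f$-structure (it is a formal consequence of \eqref{2.1}), so it imposes no restriction on the eigenvalues of $Q|_{\,\cal D}$ --- on ${\cal D}$ one simply has $Q=-f^2$, and there is no algebraic reason these eigenvalues equal $1$; and ``comparing with the classical computation of $\nabla f$ on an ${\cal S}$-manifold'' presupposes exactly what is to be proved. (A minor slip: $\widetilde Q=Q-{\rm id}$ need not be positive definite; only $Q$ is.) So, as written, the proof does not close.

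The paper settles the point with one short computation that you could have made with the ingredients you already listed. Since $f$ is skew-symmetric and ${\cal N}^{\,(5)}(\cdot\,,\cdot\,,\xi_i)=0$, substituting the expression \eqref{4.10} for $\nabla f$ into the Nijenhuis formula \eqref{4.NN} with $Z=\xi_i$ gives $g([f,f](X,Y),\xi_i)=g((\nabla_{fX}f)Y,\xi_i)-g((\nabla_{fY}f)X,\xi_i)=-2\,g(QX,fY)$. On the other hand, normality ${\cal N}^{\,(1)}=0$ together with $d\eta^i=\Phi$ forces $g([f,f](X,Y),\xi_i)=-2\,d\eta^i(X,Y)=-2\,g(X,fY)$. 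Comparing the two yields $g(\widetilde QX,fY)=0$ for all $X,Y$, and since $f$ has rank $2n$ on ${\cal D}$ while $\widetilde Q$ vanishes on ${\cal D}^\bot$, this gives $\widetilde Q=0$ directly. The missing idea in your proposal is thus to evaluate the ${\cal D}^\bot$-component of the Nijenhuis tensor against the normality condition, rather than to route the argument through parallelism of $\widetilde Q$ and an eigenvalue count.
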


\begin{proof}
Let $({f},Q,\xi_i,\eta^i,g)$ be a weak ${\cal S}$-structure.
Since ${\cal N}^{\,(1)}=0$, by Proposition~\ref{thm6.1}, we get ${\cal N}^{\,(3)}_i=0$.
By \eqref{KK}, we get ${\cal N}^{\,(5)}(\cdot\,,\xi_i,\cdot\,)={\cal N}^{\,(5)}(\cdot\,,\cdot\,,\xi_i)=0$.
Since ${f}$ is skew-symmetric, applying \eqref{4.10} with $Z=\xi_i$ in \eqref{4.NN} yields
\begin{align*}
 g( [{f},{f}](X,Y),\xi_i) =  g((\nabla_{{f} X}{f})Y, \xi_i) - g((\nabla_{{f} Y}{f})X, \xi_i) = - 2\,g(QX,{f} Y) .
\end{align*}
From this and ${\cal N}^{\,(1)}=0$
we get $g(\widetilde{Q} X, {f} Y)=0$ for all $X,Y\in \mathfrak{X}_M$; therefore, $\widetilde Q=0$.
\end{proof}

For $s=1$, from Theorem~\ref{T-4.1A} we have the following

\begin{Corollary}[see \cite{RovP-arxiv}]
A weak almost contact metric structure on $M^{2n+1}$ is weak Sasakian if and only if it is a Sasakian structure
(i.e., a normal weak contact metric structure) on $M^{2n+1}$.
\end{Corollary}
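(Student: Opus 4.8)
The plan is to derive the corollary as the $s=1$ specialization of Theorem~\ref{T-4.1A}, after unwinding the dictionary between the multi-structure terminology and the classical one-form case. First I would recall that for $s=1$ a weak metric $f$-structure is precisely a \emph{weak almost contact metric structure} on $M^{2n+1}$: the defining relations \eqref{2.1}--\eqref{2.2} become $f^2=-Q+\eta\otimes\xi$, $\eta(\xi)=1$, $Q\xi=\xi$, and $g(fX,fY)=g(X,QY)-\eta(X)\eta(Y)$, which is exactly the single-form structure of \cite{rst-55,RovP-arxiv}. Similarly, a weak ${\cal S}$-structure for $s=1$ means a normal weak metric $f$-structure with $\Phi=d\eta$; since $f$ has rank $2n$ on $M^{2n+1}$ and $\iota_\xi\Phi=0$, the condition $\Phi=d\eta$ together with normality is the definition of a \emph{weak Sasakian} structure (equivalently, a normal weak contact metric structure, the equivalence being the standard fact that $\Phi=d\eta$ is the contact metric condition).

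Next I would invoke Theorem~\ref{T-4.1A} verbatim with $s=1$: a weak metric $f$-structure is a weak ${\cal S}$-structure if and only if it is an ${\cal S}$-structure, i.e.\ if and only if $\widetilde Q=0$. So the ``only if'' direction says: a weak Sasakian structure on $M^{2n+1}$ has $Q=\operatorname{id}_{TM}$, hence $f^2=-\operatorname{id}+\eta\otimes\xi$ and $g(fX,fY)=g(X,Y)-\eta(X)\eta(Y)$, which is an ordinary almost contact metric structure, and normality plus $\Phi=d\eta$ then makes it a genuine Sasakian structure. The ``if'' direction is immediate: an ${\cal S}$-structure (with $s=1$, a Sasakian structure) is in particular a weak ${\cal S}$-structure (weak Sasakian) since $Q=\operatorname{id}$ trivially satisfies all the weak axioms.

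The one point requiring a line of justification is that the $s=1$ instance of ``weak ${\cal S}$-structure'' really coincides with ``weak Sasakian = normal weak contact metric.'' Here I would note that a weak contact metric structure is by definition a weak metric $f$-structure with $s=1$ satisfying $\Phi=d\eta$ (the first arrow in the $s=1$ diagram displayed in the Preliminaries), and adding ${\cal N}^{(1)}=0$ gives weak Sasakian (the last arrow of that diagram); this is precisely the hypothesis ``weak ${\cal S}$-structure'' with $s=1$. I expect the main obstacle to be purely expository rather than mathematical: making sure the reader sees that no rank or dimension subtlety is hidden when $s=1$ (in particular that $\bar\eta=\eta$ and $\bar\xi=\xi$, so formulas such as \eqref{4.10} reduce correctly), after which the corollary is nothing more than Theorem~\ref{T-4.1A} read in the $s=1$ language.

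\begin{proof}
For $s=1$ a weak metric $f$-structure is exactly a weak almost contact metric structure on $M^{2n+1}$ (see \cite{rst-55,RovP-arxiv}): the structure tensors reduce to $(f,Q,\xi,\eta,g)$ with $f^2=-Q+\eta\otimes\xi$, $\eta(\xi)=1$, $Q\xi=\xi$ and $g(fX,fY)=g(X,QY)-\eta(X)\eta(Y)$. Under this identification $\bar\eta=\eta$, the structure is a weak contact metric structure precisely when $\Phi=d\eta$, and a weak Sasakian (equivalently, normal weak contact metric) structure when in addition ${\cal N}^{(1)}=0$; that is, a weak Sasakian structure on $M^{2n+1}$ is the same as a weak ${\cal S}$-structure with $s=1$, and a Sasakian structure is the same as an ${\cal S}$-structure with $s=1$ (here $Q=\operatorname{id}_{TM}$ and the axioms become the classical almost contact metric ones). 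Applying Theorem~\ref{T-4.1A} with $s=1$, a weak ${\cal S}$-structure is an ${\cal S}$-structure if and only if $\widetilde Q=0$; translating back, a weak Sasakian structure on $M^{2n+1}$ is a Sasakian structure if and only if $Q=\operatorname{id}_{TM}$. The ``if'' direction is trivial since any Sasakian structure, having $Q=\operatorname{id}_{TM}$, satisfies all the weak axioms and hence is weak Sasakian; the ``only if'' direction is exactly the content of Theorem~\ref{T-4.1A}.
\end{proof}
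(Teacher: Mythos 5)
Your proposal is correct and matches the paper's own route exactly: the paper derives this corollary simply by reading Theorem~\ref{T-4.1A} in the $s=1$ case, which is precisely your dictionary argument (weak metric $f$-structure with $s=1$ = weak almost contact metric, weak ${\cal S}$ = weak Sasakian, ${\cal S}$ = Sasakian). One cosmetic remark: Theorem~\ref{T-4.1A} does not literally state ``weak ${\cal S}$ is ${\cal S}$ iff $\widetilde Q=0$''; its proof shows $\widetilde Q=0$ is forced, so the ``only if'' direction is the theorem itself, as you correctly conclude in your last sentence.
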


Next, we study a weak almost ${\cal C}$-manifolds.

\begin{Proposition}
For a weak
${\cal C}$-structure $({f},Q,\xi_i,\eta^i,g)$, we obtain
\begin{align}\label{6.1}
 2\,g((\nabla_{X}{f})Y,Z) &= {\cal N}^{\,(5)}(X,Y,Z).
\end{align}
\end{Proposition}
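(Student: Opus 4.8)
The plan is to specialize the master formula \eqref{3.1-new} from Proposition~\ref{lem6.0} to the case of a weak $\mathcal{C}$-structure and watch the terms collapse. Recall that a weak $\mathcal{C}$-structure is, by definition, normal (so $\mathcal{N}^{\,(1)}=0$), satisfies $d\Phi=0$, and has each $\eta^i$ closed (so $d\eta^i=0$ for all $i$). I would begin by writing down \eqref{3.1-new} in full and then striking out each vanishing contribution one at a time: the two $d\Phi$ terms vanish since $d\Phi=0$; the term $g(\mathcal{N}^{\,(1)}(Y,Z),fX)$ vanishes by normality; and all three terms under the summation over $i$ vanish because each involves a factor $\mathcal{N}^{\,(2)}_i$ or $d\eta^i$, and $d\eta^i=0$ forces $\mathcal{N}^{\,(2)}_i=0$ as well (directly from the formula $\mathcal{N}^{\,(2)}_i(X,Y)=2\,d\eta^i(fX,Y)-2\,d\eta^i(fY,X)$). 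What survives is precisely $2\,g((\nabla_X f)Y,Z)=\mathcal{N}^{\,(5)}(X,Y,Z)$, which is \eqref{6.1}.

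The one point that deserves a sentence of justification rather than mere term-cancellation is why $d\eta^i=0$ (together with normality) implies $\mathcal{N}^{\,(2)}_i=0$; this is immediate from the displayed formula for $\mathcal{N}^{\,(2)}_i$ in terms of $d\eta^i$, so no real work is needed. I should also note in passing that for a weak $\mathcal{C}$-structure one in fact has $\nabla_{\xi_i}\xi_j=0$ and the $\xi_i$ are Killing (these follow from the weak $\mathcal{K}$-manifold theorem proved earlier, since a weak $\mathcal{C}$-structure is a weak $\mathcal{K}$-structure), but this is not needed for \eqref{6.1} itself and can be omitted from the proof of this proposition.

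There is essentially no obstacle here: the proposition is a direct corollary of Proposition~\ref{lem6.0}, and the entire proof is the observation that the defining conditions of a weak $\mathcal{C}$-structure kill every term on the right-hand side of \eqref{3.1-new} except $\mathcal{N}^{\,(5)}$. If anything, the only thing to be careful about is bookkeeping — making sure that the summation terms $2\,d\eta^i(fY,X)\,\eta^i(Z)$ and $2\,d\eta^i(fZ,X)\,\eta^i(Y)$ are also recognized as vanishing (again because $d\eta^i=0$), not just the $\mathcal{N}^{\,(2)}_i$ term. So the write-up is a two-line derivation: quote \eqref{3.1-new}, impose $\mathcal{N}^{\,(1)}=0$, $d\Phi=0$, $d\eta^i=0$ (hence $\mathcal{N}^{\,(2)}_i=0$), and read off \eqref{6.1}.
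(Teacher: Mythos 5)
Your proposal is correct and follows exactly the route the paper intends: the statement is a direct specialization of the master formula \eqref{3.1-new} of Proposition~\ref{lem6.0}, where $d\Phi=0$, ${\cal N}^{\,(1)}=0$ (normality), and $d\eta^i=0$ (hence ${\cal N}^{\,(2)}_i=0$ by its expression in terms of $d\eta^i$) kill every term except ${\cal N}^{\,(5)}$. Your bookkeeping of the remaining summation terms is also right, so nothing is missing.
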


A ${\cal K}$-structure is a ${\cal C}$-structure if and only if ${f}$ is parallel, e.g., \cite[Theorem~1.5]{b1970}.
 The~following our theorem extends this result
 and characterizes weak ${\cal C}$-manifolds
 using the condition $\nabla{f}=0$.

\begin{Theorem}\label{thm6.2D}
A weak metric $f$-structure $({f},Q,\xi_i,\eta^i,g)$ with conditions $\nabla{f}=0$ and
\begin{align*}
 [\xi_i,\xi_j]^\bot =0,\quad 1\le i,j\le s,
\end{align*}
is a~weak ${\cal C}$-structure with the property ${\cal N}^{\,(5)}=0$.
\end{Theorem}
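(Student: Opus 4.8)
The plan is to show that the hypotheses $\nabla f = 0$ and $[\xi_i,\xi_j]^\bot = 0$ force all the structure tensors governing the weak $\mathcal{C}$-condition to vanish, and then to verify the three defining properties: normality (${\cal N}^{\,(1)}=0$), $d\Phi=0$, and $d\eta^i=0$. First I would substitute $\nabla f = 0$ into the formula \eqref{4.NN} for the Nijenhuis torsion: since $[{S},{S}](X,Y) = ({S}\nabla_Y{S} - \nabla_{{S} Y}{S}) X - ({S}\nabla_X{S} - \nabla_{{S} X}{S}) Y$ with ${S}=f$, every term contains a covariant derivative of $f$, so $[f,f]\equiv 0$. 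Next, from $\nabla f = 0$ I get $(\pounds_X \eta^i)(Y)$-type expressions under control: in particular $\nabla_{\xi_i} f = 0$ plugged into \eqref{4.2} gives $(\pounds_{\xi_i}f)Y = -\nabla_{fY}\xi_i + f\nabla_Y\xi_i$, and I would also need to control $d\eta^i$. The cleanest route is probably to first establish $\nabla_X\xi_i$ lies in a controlled subbundle: since $g(X,\xi_i) = \eta^i(X)$ and $\nabla f = 0$, differentiating the relation $f^2 = -Q + \sum_i \eta^i\otimes\xi_i$ and using $[Q,f]=0$ together with the compatibility \eqref{2.2} should pin down the behavior of $\eta^i$ and $\xi_i$ under $\nabla$.

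The key step I expect to be decisive is showing $d\eta^i = 0$. For this I would use that ${\cal N}^{\,(1)} = [f,f] + 2\sum_i d\eta^i\otimes\xi_i$; once $[f,f]=0$ is known, normality ${\cal N}^{\,(1)}=0$ is equivalent to $d\eta^i = 0$ for all $i$. So the whole theorem reduces to proving $d\eta^i=0$ directly from $\nabla f = 0$ and $[\xi_i,\xi_j]^\bot=0$. To do this, I would compute $2\,d\eta^i(X,Y) = X\eta^i(Y) - Y\eta^i(X) - \eta^i([X,Y]) = g(\nabla_X\xi_i,Y) - g(\nabla_Y\xi_i,X)$, i.e. $d\eta^i$ is the skew part of $\nabla\xi_i$. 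I split cases according to whether arguments lie in ${\cal D}$ or ${\cal D}^\bot$. On ${\cal D}^\bot\times{\cal D}^\bot$ the hypothesis $[\xi_i,\xi_j]^\bot=0$ combined with $\eta^i([\xi_j,\xi_k])=0$ (which needs $\nabla f=0$ to kill the ${\cal D}$-component's contribution, or follows from the para-relations) handles it. On ${\cal D}\times{\cal D}$ and the mixed case, I would apply $\nabla f = 0$ to the identity $\nabla_X(f\xi_i) = (\nabla_X f)\xi_i + f\nabla_X\xi_i = f\nabla_X\xi_i$, and since $f\xi_i=0$ this gives $f\nabla_X\xi_i = 0$, so $\nabla_X\xi_i \in \ker f = {\cal D}^\bot$ for every $X$. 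Then for $X,Y\in{\cal D}$ we get $g(\nabla_X\xi_i,Y)=0$ since $\nabla_X\xi_i\perp{\cal D}$, so $d\eta^i(X,Y)=0$ there; the mixed case $d\eta^i(X,\xi_j)$ is controlled because $\nabla_{\xi_j}\xi_i\in{\cal D}^\bot$ and $g(\nabla_X\xi_i,\xi_j)= \tfrac12 X(g(\xi_i,\xi_j)) = 0$.

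Having established $\nabla_X\xi_i\in{\cal D}^\bot$ for all $X$ and $d\eta^i=0$, I would then get $d\Phi = 0$: indeed $\nabla\Phi = 0$ follows from $\nabla f = 0$ and $\nabla g = 0$ since $\Phi(X,Y)=g(X,fY)$, hence $\Phi$ is parallel and in particular closed. Combined with ${\cal N}^{\,(1)}=[f,f]+2\sum_i d\eta^i\otimes\xi_i = 0$, this gives all three defining conditions of a weak ${\cal C}$-structure. Finally, for ${\cal N}^{\,(5)}=0$, I would invoke Proposition~\ref{lem6.0}: the left-hand side $2g((\nabla_X f)Y,Z)$ vanishes by hypothesis, and on the right-hand side the terms $3\,d\Phi(X,fY,fZ)$, $3\,d\Phi(X,Y,Z)$ vanish since $d\Phi=0$, the ${\cal N}^{\,(1)}$-term vanishes, and each $d\eta^i$-term vanishes since $d\eta^i=0$; what remains is exactly ${\cal N}^{\,(5)}(X,Y,Z)=0$. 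Alternatively, and perhaps more transparently, I would use the specialized formula \eqref{6.1}, $2g((\nabla_X f)Y,Z) = {\cal N}^{\,(5)}(X,Y,Z)$, valid on any weak ${\cal C}$-structure: once we know we have a weak ${\cal C}$-structure, $\nabla f = 0$ gives ${\cal N}^{\,(5)}=0$ immediately.

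The main obstacle I anticipate is the bookkeeping around $d\eta^i$ on ${\cal D}^\bot\times{\cal D}^\bot$: the hypothesis is only $[\xi_i,\xi_j]^\bot=0$, so I must show the ${\cal D}$-component of $[\xi_i,\xi_j]$ also does not obstruct $d\eta^i=0$, i.e. that $\eta^i([\xi_j,\xi_k])=0$. But $\eta^i$ vanishes on ${\cal D}$ by definition of the contact distribution, so $\eta^i([\xi_j,\xi_k]) = \eta^i([\xi_j,\xi_k]^\bot) = 0$ automatically from the hypothesis; thus the obstacle dissolves once one organizes the computation carefully. The only genuinely structural input is the observation $\nabla_X\xi_i\in{\cal D}^\bot$, which is where $\nabla f = 0$ does the real work, and everything else is routine verification against the formulas already recorded in Propositions~\ref{thm6.1}, \ref{lem6.0} and equation~\eqref{6.1}.
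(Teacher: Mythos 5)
Much of your plan is sound, and in places more direct than the paper's argument: $[f,f]=0$ from \eqref{4.NN}; $d\Phi=0$ because $\Phi$ is parallel (the paper obtains the same via \eqref{E-3.3}); the observation that $\nabla f=0$ forces $f\nabla_X\xi_i=\nabla_X(f\xi_i)=0$, hence $\nabla_X\xi_i\in{\cal D}^\bot$ for all $X$; the reduction of normality to $d\eta^i=0$ once $[f,f]=0$; the vanishing of $d\eta^i$ on ${\cal D}\times{\cal D}$ (killed by $\nabla_X\xi_i\in{\cal D}^\bot$) and on ${\cal D}^\bot\times{\cal D}^\bot$ (killed by the hypothesis $g([\xi_i,\xi_j],\xi_k)=0$); and the final deduction ${\cal N}^{\,(5)}=0$ from \eqref{6.1} or \eqref{3.1-new}. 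The paper instead reaches $d\eta^i(fY,X)=0$ through the general formula of Proposition~\ref{lem6.0} with $Z=\xi_j$, combined with the auxiliary claims \eqref{E-cond1} and ${\cal N}^{\,(2)}_i(Y,\xi_j)=0$.

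The gap in your proposal is the mixed slot. Your justification ``$g(\nabla_X\xi_i,\xi_j)=\tfrac12 X(g(\xi_i,\xi_j))=0$'' is valid only for $i=j$; for $i\neq j$ metric compatibility gives merely $g(\nabla_X\xi_i,\xi_j)=-g(\nabla_X\xi_j,\xi_i)$, i.e.\ skew-symmetry in $(i,j)$, not vanishing. Since $\nabla_{\xi_j}\xi_i\in{\cal D}^\bot$, for $X\in{\cal D}$ one has $2\,d\eta^i(X,\xi_j)=g(\nabla_X\xi_i,\xi_j)$, so the identity you assert is exactly the statement still to be proved -- it is the content of the paper's step \eqref{E-cond1} (equivalently $\nabla_Z\xi_i=0$ for $Z\in{\cal D}$) and of the claim ${\cal N}^{\,(2)}_i(Y,\xi_j)=0$ there. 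Moreover, it is not a formal consequence of $\nabla f=0$, $\nabla_X\xi_i\in{\cal D}^\bot$ and $[\xi_i,\xi_j]^\bot=0$: on $M=N^{2n}\times\mathbb{R}^2$ with $N$ K\"ahler, $f=J\oplus 0$, $Q={\rm id}$, and the rotated orthonormal frame $\xi_1=\cos\theta\,\partial_1+\sin\theta\,\partial_2$, $\xi_2=-\sin\theta\,\partial_1+\cos\theta\,\partial_2$ with $\theta$ a nonconstant function on $N$, one has $\nabla f=0$, $[\xi_1,\xi_2]=0$ and $\nabla_X\xi_1=X(\theta)\,\xi_2\in{\cal D}^\bot$, yet $2\,d\eta^1(X,\xi_2)=X(\theta)\neq0$. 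Thus the possible ``rotation'' of the frame $\{\xi_i\}$ inside ${\cal D}^\bot$ along ${\cal D}$ is the genuine content at this point; your bookkeeping does not rule it out, and the example indicates it cannot be ruled out from the two stated hypotheses alone, so this step requires an additional input (for instance $g(\nabla_X\xi_i,\xi_j)=0$ for $i\neq j$, which is automatic when $s=1$, or the ${\cal K}$-structure assumption of Blair's original Theorem~1.5); the same caveat applies to the derivation of \eqref{E-cond1} in the paper's own proof, which your more elementary route does not reproduce.
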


\begin{proof}
Using condition $\nabla{f}=0$, from \eqref{4.NN} we obtain $[{f},{f}]=0$.
Hence, from \eqref{2.6X} we get ${\cal N}^{\,(1)}(X,Y)=2\,\sum\nolimits_{\,i} d\eta^i(X,Y)\,\xi_i$,
and from \eqref{4.NN} with $S=f$ and $Y=\xi_i$ we obtain
\begin{align}\label{E-cond1}
 \nabla_{{f} X}\,\xi_i - {f}\,\nabla_{X}\,\xi_i = 0,\quad X\in \mathfrak{X}_M.
\end{align}
From \eqref{E-3.3}, we calculate
\[
 3\,d\Phi(X,Y,Z) = g((\nabla_{X}{f})Z, Y) + g((\nabla_{Y}{f})X,Z) + g((\nabla_{Z}{f})Y,X);
\]
hence, using condition $\nabla{f}=0$ again, we get $d\Phi=0$. Next,
\begin{align*}
 {\cal N}^{\,(2)}_i(Y,\xi_j) = -\eta^i([{f} Y,\xi_j]) = g(\xi_j, {f}\nabla_{\xi_i} Y) =0.
\end{align*}
Thus, setting $Z=\xi_j$ in Proposition~\ref{lem6.1} and using the condition $\nabla{f}=0$ and the properties
$d\Phi=0$, ${\cal N}^{\,(2)}_i(Y,\xi_j)=0$ and ${\cal N}^{\,(1)}(X,Y)=2\sum\nolimits_{\,i} d\eta^i(X,Y)\,\xi_i$, we find
 $0 = 2\,d\eta^i({f} Y, X) - {\cal N}^{\,(5)}(X,\xi_i, Y)$.
 By~\eqref{KK} and \eqref{E-cond1}, we get
\[
 {\cal N}^{\,(5)}(X,\xi_i, Y) = g([\xi_i,{f} Y] -{f}[\xi_i,Y],\, \widetilde Q X)
 = g(\nabla_{{f} Y}\,\xi_i - {f}\,\nabla_{Y}\,\xi_i,\, \widetilde Q X) = 0;
\]
hence, $d\eta^i({f} Y, X)=0$. From this and condition
\[
 g([\xi_i,\xi_j],\xi_k)=2\,d\eta^k(\xi_j, \xi_i)=0
\]
we get $d\eta^i=0$. By the above, ${\cal N}^{\,(1)}=0$.
Thus, $({f},Q,\xi_i,\eta^i,g)$ is a weak ${\cal C}$-structure.
Finally, from \eqref{6.1} and condition $\nabla{f}=0$ we get ${\cal N}^{\,(5)}=0$.
\end{proof}

\begin{Example}\rm
Let $M$ be a $2n$-dimensional smooth manifold and $J:TM\to TM$ an endomorphism of rank $2n$ such that $\nabla J=0$.
To construct a weak ${\cal C}$-structure on $M\times\mathbb{R}^s$ or $M\times T^s$, where
$T^s$ is an $s$-dimensional torus,
take any point $(x, t_1,\ldots,t_s)$ of either
space and set $\xi_i = (0, \partial/\partial t_i)$, $\eta^i =(0, dt_i)$~and
\[
 {f}(X, Y) = (J X, 0),\quad
 Q(X, Y) = (- J^{\,2} X,\, Y).
\]
where $X\in M_x$ and $Y\in\{\mathbb{R}^s_t, T^s_t\}$.
Then \eqref{2.1} holds and Theorem~\ref{thm6.2D} can be used.
\end{Example}

\section{Geometry of weak $f$-{K}-contact manifolds}
\label{sec:02}

Here, we characterize weak $f$-{K}-contact manifolds among all weak almost ${\cal S}$-manifolds
and find conditions under which a Riemannian manifold endowed with a set of unit Killing vector fields becomes a weak $f$-{K}-contact manifold.

\begin{Lemma}
For a weak $f$-{K}-contact manifold
we obtain ${\cal N}^{\,(1)}({\xi_i},\,\cdot)=0$ and
\begin{eqnarray*}
 {\cal N}^{\,(5)}({\xi_i},\,\cdot\,,\,\cdot) & = & {\cal N}^{\,(5)}(\,\cdot\,,\,{\xi_i},\,\cdot) =0,\\
 \pounds_{{\xi_i}}{Q} &=& \nabla_{\xi_i}{Q} = 0 ,\\
 \nabla_{\xi_i} {f} &=& 0.
\end{eqnarray*}
\end{Lemma}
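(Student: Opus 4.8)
The plan is to establish the four identities in sequence, since each one feeds the next. For a weak $f$-K-contact manifold the structure vector fields $\xi_i$ are Killing, so $\nabla_X\xi_i = -fX - f h_i X$ essentially holds (the characterization promised in Section~\ref{sec:02}); in any case, from Proposition~\ref{thm6.2A} the Killing condition on $\xi_i$ is equivalent to ${\cal N}^{\,(3)}_i = 2h_i = 0$, which will be the workhorse. First I would record $h_i = 0$, hence $\pounds_{\xi_i}f = 0$ and $f h_i = h_i f = 0$. Then Proposition~\ref{P-4.1} gives $g((h_i - h_i^*)X,Y) = \frac12\,{\cal N}^{\,(5)}(\xi_i,X,Y)$; since $h_i = 0$ forces $h_i^* = 0$ as well, the left side vanishes, so ${\cal N}^{\,(5)}(\xi_i,\,\cdot\,,\,\cdot) = 0$. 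By the skew-symmetry of ${\cal N}^{\,(5)}$ in its last two arguments together with \eqref{KK} (which gives ${\cal N}^{\,(5)}(X,\xi_i,Z) = g({\cal N}^{\,(3)}_i(Z),\widetilde Q X) = 0$ once $h_i = 0$), we also get ${\cal N}^{\,(5)}(\,\cdot\,,\xi_i,\,\cdot) = 0$.

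Next, for $\pounds_{\xi_i}Q = 0$: the second identity of Proposition~\ref{P-4.1} reads $h_i f + f h_i = -\frac12\,\pounds_{\xi_i}Q$, and with $h_i = 0$ the left side is zero, so $\pounds_{\xi_i}Q = 0$ immediately. To upgrade this to $\nabla_{\xi_i}Q = 0$ I would use that $\xi_i$ is Killing, so its flow acts by isometries; combined with $\pounds_{\xi_i}Q = 0$ and the fact that $\nabla$ is the Levi-Civita connection (natural under isometries), one gets $\pounds_{\xi_i}(\nabla_X Q) = \nabla_{[\xi_i,X]}Q$, and then the standard identity $(\pounds_{\xi_i}Q)X = \nabla_{\xi_i}(QX) - Q\nabla_{\xi_i}X - (\nabla_X Q)\xi_i \cdot(\text{flow term})$ — more cleanly, for a Killing field $\pounds_{\xi_i}\nabla = 0$ as a connection, so $\nabla_{\xi_i}$ commutes appropriately and $\pounds_{\xi_i}Q = 0$ together with $\nabla_{\xi_i}\xi_i = 0$ (from \eqref{2-xi}) and $Q\xi_i = \xi_i$ yields $\nabla_{\xi_i}Q = 0$. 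I expect this is cleanest via the formula $(\nabla_{\xi_i}Q)X = \nabla_{\xi_i}(QX) - Q(\nabla_{\xi_i}X)$ and $(\pounds_{\xi_i}Q)X = \nabla_{\xi_i}(QX) - Q(\nabla_{\xi_i}X) - (\nabla_{QX}\xi_i) + Q(\nabla_X\xi_i)$, so $(\nabla_{\xi_i}Q)X - (\pounds_{\xi_i}Q)X = \nabla_{QX}\xi_i - Q\nabla_X\xi_i$; using $\nabla_X\xi_i = -(f + f h_i)X = -fX$ (since $h_i=0$) and $[Q,f] = 0$ from \eqref{Eq-f-01}, this difference vanishes, giving $\nabla_{\xi_i}Q = \pounds_{\xi_i}Q = 0$.

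For the last identity $\nabla_{\xi_i}f = 0$: by \eqref{3.1AA}, $2g((\nabla_{\xi_i}f)Y,Z) = {\cal N}^{\,(5)}(\xi_i,Y,Z)$, and we have just shown the right side vanishes; hence $\nabla_{\xi_i}f = 0$. Alternatively \eqref{4.2} gives $(\pounds_{\xi_i}f)Y = (\nabla_{\xi_i}f)Y - \nabla_{fY}\xi_i + f\nabla_Y\xi_i$, and with $h_i = 0$ the left side is zero while $\nabla_{fY}\xi_i - f\nabla_Y\xi_i = -f(fY) + f(fY) = 0$ using $\nabla_X\xi_i = -fX$, so again $\nabla_{\xi_i}f = 0$. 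Finally ${\cal N}^{\,(1)}(\xi_i,\,\cdot) = 0$: expand ${\cal N}^{\,(1)}(\xi_i,Y) = [f,f](\xi_i,Y) + 2\sum_j d\eta^j(\xi_i,Y)\xi_j$; the second sum is $\sum_j {\cal N}^{\,(4)}_{ij}(Y)\xi_j = 0$ by Proposition~\ref{thm6.2A}, and using \eqref{4.NN} with $S = f$ plus $f\xi_i = 0$, $[f,f](\xi_i,Y) = (f\nabla_Y f - \nabla_{fY}f)\xi_i - (f\nabla_{\xi_i}f - \nabla_{f\xi_i}f)Y = -f(\nabla_Y f)\xi_i + (\nabla_{fY}f)\xi_i - f(\nabla_{\xi_i}f)Y$; the last term is zero by what we just proved, and the first two combine using $(\nabla_X f)\xi_i = \nabla_X(f\xi_i) - f\nabla_X\xi_i = f(fX + f h_i X) = f^2 X = (-Q + \sum_j\eta^j\otimes\xi_j)X$, so $-f(\nabla_Y f)\xi_i + (\nabla_{fY}f)\xi_i = -f(-QY + \bar\eta\text{-terms}) + (-Q fY + 0) = fQY - QfY = 0$ by $[Q,f]=0$, where the $\eta^j(Y)\xi_j$ piece is annihilated by the outer $f$. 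The main obstacle I anticipate is the bookkeeping in the $\nabla_{\xi_i}Q$ step — keeping the Lie-derivative-versus-covariant-derivative correction terms straight and invoking the right form of $\nabla_X\xi_i$ — but once $h_i = 0$ is in hand everything else is essentially substitution into the already-derived formulas \eqref{3.1AA}, \eqref{4.2}, \eqref{KK}, and Proposition~\ref{P-4.1}.
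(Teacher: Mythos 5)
Your proof is correct, and since the paper states this lemma without proof (deferring to its cited sources), there is no in-text argument to diverge from; what you assemble is exactly the proof the paper's own toolkit suggests: $h_i=0$ from Proposition~\ref{thm6.2A}, then ${\cal N}^{\,(5)}(\xi_i,\cdot\,,\cdot)=0$ and $\pounds_{\xi_i}Q=0$ from Proposition~\ref{P-4.1}, ${\cal N}^{\,(5)}(\cdot\,,\xi_i,\cdot)=0$ from \eqref{KK}, $\nabla_{\xi_i}f=0$ from \eqref{3.1AA}, and ${\cal N}^{\,(1)}(\xi_i,\cdot)=0$ from \eqref{4.NN} together with ${\cal N}^{\,(4)}_{ij}=0$. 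Two small points. First, your $\nabla_{\xi_i}Q$ and ${\cal N}^{\,(1)}$ steps invoke $\nabla_X\xi_i=-fX$, which is the content of the theorem stated right after the lemma; to keep the lemma self-contained you should derive it in one line -- for a Killing field $\xi_i$ one has $d\eta^i(X,Y)=g(\nabla_X\xi_i,Y)$, and $d\eta^i=\Phi$ then gives $g(\nabla_X\xi_i,Y)=-g(fX,Y)$ -- so there is no circularity, but it deserves to be said explicitly rather than asserted. Second, in expanding $[f,f](\xi_i,Y)$ via \eqref{4.NN} you flipped the sign of the first bracket, writing $-f(\nabla_Yf)\xi_i+(\nabla_{fY}f)\xi_i$ instead of $f(\nabla_Yf)\xi_i-(\nabla_{fY}f)\xi_i$; this is harmless, since with $(\nabla_Xf)\xi_i=f^2X$ the surviving terms reduce to $\pm(fQ-Qf)Y$, which vanishes by $[Q,f]=0$ either way.
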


Recall the following property of $f$-{K}-contact manifolds, see \cite{b1970,Goertsches-2}:
\begin{equation}\label{E-30}
 \nabla\,{\xi_i} = -{f},\quad 1\le i\le s .
\end{equation}

\begin{Theorem}
A weak almost ${\cal S}$-structure is weak $f$-{K}-contact if and only if \eqref{E-30} is true.
\end{Theorem}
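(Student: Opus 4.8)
The strategy is to establish the two implications separately, relying on the structural identities already derived for weak almost $\mathcal{S}$-manifolds. For the forward direction, assume the weak almost $\mathcal{S}$-structure is weak $f$-$K$-contact, so each $\xi_i$ is Killing. First I would invoke Proposition~\ref{thm6.2A}: since $\xi_i$ is Killing, the tensor ${\cal N}^{\,(3)}_i = 2h_i$ vanishes, hence $h_i = 0$. Next I would use \eqref{4.2}, namely $(\pounds_{\xi_i}{f})Y = (\nabla_{\xi_i}{f})Y - \nabla_{{f} Y}\,\xi_i + {f}\nabla_Y\,\xi_i$, together with the Lemma just stated (which gives $\nabla_{\xi_i}{f}=0$ on weak $f$-$K$-contact manifolds), to conclude $\nabla_{{f} Y}\,\xi_i = {f}\nabla_Y\,\xi_i$ for all $Y$. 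The remaining task is to upgrade this to $\nabla_X\,\xi_i = -{f} X$ for \emph{all} $X\in\mathfrak{X}_M$. On ${\cal D}^\bot$ this is immediate from \eqref{2-xi} and ${f}\xi_j=0$. On ${\cal D}$, since $\xi_i$ is Killing, $\nabla_X\,\xi_i$ is a skew-symmetric operator in $X$; I would combine this skew-symmetry with the intertwining relation $\nabla_{{f} Y}\,\xi_i = {f}\nabla_Y\,\xi_i$ and the identity $2\,d\eta^i(X,Y)=g({f} X,{f} Y)\cdot(\text{appropriate factor})$ coming from $\Phi=d\eta^i$ and \eqref{2.2}, to pin down $g(\nabla_X\,\xi_i, Y)= -g({f} X, Y)$ on ${\cal D}$, which is \eqref{E-30}.

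For the converse, assume \eqref{E-30} holds: $\nabla\,\xi_i = -{f}$. Then for any $X,Y$,
\begin{align*}
 (\pounds_{\xi_i}\,g)(X,Y) = g(\nabla_X\,\xi_i, Y) + g(\nabla_Y\,\xi_i, X) = -g({f} X, Y) - g({f} Y, X) = 0,
\end{align*}
using skew-symmetry of ${f}$ on a weak metric $f$-manifold. Hence every $\xi_i$ is Killing, which is exactly the extra condition turning a weak almost $\mathcal{S}$-structure into a weak $f$-$K$-contact structure. This direction is essentially a one-line computation once one recalls that ${f}$ is skew-symmetric with respect to any compatible metric.

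The main obstacle is the forward direction, specifically extending the partial identity $\nabla_{{f} Y}\,\xi_i = {f}\nabla_Y\,\xi_i$ to the full relation $\nabla_X\,\xi_i=-{f} X$ on the contact distribution. The natural route is to exploit three facts simultaneously: (i) $\nabla_X\,\xi_i$ takes values in ${\cal D}$ for $X\in{\cal D}$ (from \eqref{Eq-normal-2}-type arguments, or here from $g(\nabla_X\,\xi_j,\xi_i)=0$ established earlier for weak almost $\mathcal{S}$-manifolds); (ii) $X\mapsto \nabla_X\,\xi_i$ is skew-symmetric by the Killing property; (iii) its antisymmetrized part is governed by $d\eta^i = \Phi$, giving $g(\nabla_X\,\xi_i, Y) - g(\nabla_Y\,\xi_i, X) = -2\,d\eta^i(X,Y) = -2\,\Phi(X,Y) = -2\,g(X,{f} Y)$. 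Combining (ii) and (iii) yields $g(\nabla_X\,\xi_i, Y) = -g(X,{f} Y) = g({f} X, Y)$ — wait, the sign bookkeeping here is the delicate point and must be reconciled with the convention $\Phi(X,Y)=g(X,{f} Y)$ and the $2\,d\eta^i(X,Y)=\eta^i([\cdot,\cdot])$ normalization in the coboundary formula; I would carry the constants carefully to land on $\nabla_X\,\xi_i = -{f} X$. Once \eqref{E-30} is secured on both ${\cal D}$ and ${\cal D}^\bot$, the proof is complete.
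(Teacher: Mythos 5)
Your converse is correct and complete: given \eqref{E-30}, skew-symmetry of $f$ with respect to any compatible metric gives $(\pounds_{\xi_i}g)(X,Y)=-g(fX,Y)-g(fY,X)=0$ in one line. For the forward direction your plan works, but it takes a detour that the statement does not need, and the detour is exactly where your sign trouble lives. The paper itself prints no proof of this theorem (it is quoted from the original articles), but the computation it does display in the proof of Theorem~\ref{prop2.1b} is the whole argument, and it settles your ``main obstacle'' directly and for \emph{all} vector fields, not just on ${\cal D}$: with the paper's convention $d\omega(X,Y)=\frac12\{X(\omega(Y))-Y(\omega(X))-\omega([X,Y])\}$ and $\eta^i=g(\cdot\,,\xi_i)$, one has $2\,d\eta^i(X,Y)=g(\nabla_X\xi_i,Y)-g(\nabla_Y\xi_i,X)$; the Killing condition turns the right-hand side into $2\,g(\nabla_X\xi_i,Y)$, and \eqref{2.3} gives $g(\nabla_X\xi_i,Y)=\Phi(X,Y)=g(X,fY)=-g(fX,Y)$, i.e.\ $\nabla_X\xi_i=-fX$ for every $X$ (so no separate treatment of ${\cal D}$ and ${\cal D}^\bot$ is required, although \eqref{2-xi} confirms it on ${\cal D}^\bot$). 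In other words, your facts (ii) and (iii) alone close the proof; the correct sign in (iii) is $+2\,d\eta^i(X,Y)$, not $-2\,d\eta^i(X,Y)$ as you wrote, and with that fixed you land on \eqref{E-30} rather than $\nabla_X\xi_i=+fX$. The machinery you front-load --- $h_i=0$ via Proposition~\ref{thm6.2A}, $\nabla_{\xi_i}f=0$ from the unproved Lemma of Section~\ref{sec:02}, and the intertwining relation $\nabla_{fY}\xi_i=f\nabla_Y\xi_i$ --- is unnecessary, and leaning on that Lemma is mildly risky, since its omitted proof in the sources is naturally obtained from the $f$-K-contact identities you are in the middle of establishing, so you would have to check there is no circularity; the streamlined argument avoids the question entirely. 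Finally, drop the vague identity ``$2\,d\eta^i(X,Y)=g(fX,fY)\cdot(\text{factor})$'': what you actually use is simply $d\eta^i=\Phi$ with $\Phi(X,Y)=g(X,fY)$.
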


The mapping $R_{\,{\xi_i}}: X \mapsto R_{X,\,{{\xi_i}}}\,{{\xi_i}}\ (\xi\in{\cal D}^\bot,\ \|\xi\|=1)$ is called the \textit{Jacobi operator} in the ${\xi_i}$-direction, e.g., \cite{Rov-Wa-2021}.
For a weak almost ${\cal S}$-manifold, by Proposition~\ref{lem6.1}, we get $R_{\,{\xi_i}}(X)\,\in{\cal D}$.

\begin{Theorem}\label{prop2.1b}
Let $(M^{2n+s},g)$ be a Riemannian manifold  with orthonormal Killing vector fields $\xi_i,\ldots,\xi_s$ such that $d\,\eta^1=\ldots=d\,\eta^s$ $($where $\eta^i$ is the 1-form dual to $\xi_i)$ and the Jacobi operators $R_{\,{\xi_i}}\ (i\le s)$ are positive definite on the distribution ${\cal D}=\bigcap_{\,i}\ker\eta^i$.
Then the manifold is weak $f$-{K}-contact, and its structural tensors are the following:
\[
 {f} = -\nabla\,{\xi_i},\quad
 Q X = R_{\xi_i}(X)\quad
 (X\in{\cal D},\ \ 1\le i\le s).
\]
\end{Theorem}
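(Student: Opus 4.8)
The plan is to verify that the data $(f,Q,\xi_i,\eta^i,g)$ defined in the statement forms a weak almost ${\cal S}$-structure and then invoke the characterization (the preceding theorem: a weak almost ${\cal S}$-structure is weak $f$-{K}-contact iff $\nabla\xi_i=-f$), so that the Killing hypothesis together with $f=-\nabla\xi_i$ gives the conclusion. First I would set $f:=-\nabla\xi_1$ and verify that this choice is independent of $i$: since the $\xi_i$ are Killing, $\nabla\xi_i$ is skew-symmetric, so $g(\nabla_X\xi_i,Y)=-g(X,\nabla_Y\xi_i)$; the hypothesis $d\eta^1=\dots=d\eta^s$ translates (via $2\,d\eta^i(X,Y)=g(\nabla_X\xi_i,Y)-g(\nabla_Y\xi_i,X)=2\,g(\nabla_X\xi_i,Y)$ for Killing fields) into $\nabla_X\xi_i=\nabla_X\xi_j$ for all $X$, hence $\nabla\xi_i$ is the same operator for every $i$. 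So $f$ is well-defined and skew-symmetric, and clearly $f\xi_i=-\nabla_{\xi_i}\xi_i=0$ since a unit Killing field has geodesic flow (or directly: $g(\nabla_{\xi_i}\xi_i,Y)=-g(\xi_i,\nabla_Y\xi_i)=-\tfrac12 Y(g(\xi_i,\xi_i))=0$).

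Next I would define $Q$ by $QX=R_{\xi_i}(X)$ for $X\in{\cal D}$ and $Q\xi_i=\xi_i$, and check the structure equations \eqref{2.1} and the compatibility \eqref{2.2}. The key computation is the standard Killing-field identity $R_{X,\xi}\,\xi = \nabla_X\nabla_{\xi}\xi-\dots$, which for a Killing field $\xi$ reduces to $R_{\xi,X}\xi = -\nabla_X(\nabla\xi)\cdot\xi + (\nabla\xi)^2 X$ type formula; more precisely, for a unit Killing field one has $R_{X,\xi_i}\xi_i = -(\nabla_{\xi_i}f)X$ plus the term $f^2 X$ — I would use that a Killing field satisfies $\nabla_X\nabla_Y\xi = -R_{Y,\xi}X$, whence $R_{\xi_i,X}\xi_i = \nabla_X(\nabla_{\xi_i}\xi_i) - \nabla_{\xi_i}(\nabla_X\xi_i) - \nabla_{[\xi_i,X]}\xi_i$ and, after using $\nabla_{\xi_i}\xi_i=0$ and $f=-\nabla\xi_i$, obtain $R_{\xi_i}(X) = (\nabla_{\xi_i}f)X + f^2 X$. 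For the structure equation I want $Q = f^2\cdot(-1)$ on ${\cal D}$ shifted appropriately, i.e. $-Q + \sum_i\eta^i\otimes\xi_i = f^2$; on ${\cal D}$ this reads $Q = -f^2 = -(\nabla\xi_i)^2$, so I must show $R_{\xi_i}(X) = -(\nabla\xi_i)^2 X = -f^2 X$ for $X\in{\cal D}$, i.e. that the term $(\nabla_{\xi_i}f)X$ vanishes on ${\cal D}$. This follows because on a weak almost ${\cal S}$-structure \eqref{3.1AA} gives $2\,g((\nabla_{\xi_i}f)Y,Z)={\cal N}^{(5)}(\xi_i,Y,Z)$, and once we know the structure is built from Killing fields the relevant ${\cal N}^{(5)}$ terms will cancel — but here there is a circularity to untangle: we are trying to establish the weak almost ${\cal S}$-structure, so I cannot yet quote \eqref{3.1AA}. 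Instead I would argue directly that $\nabla_{\xi_i}f$ is skew-symmetric (being a derivative along $\xi_i$ of the skew operator $f$, plus Killing-flow preserves $g$) and that $g((\nabla_{\xi_i}f)X,Y)$ is symmetric in $X,Y$ by differentiating $g(fX,Y)=-g(X,fY)$ — forcing $\nabla_{\xi_i}f=0$ on all of $TM$, hence $R_{\xi_i}(X)=f^2 X\cdot(-1)$ as needed. Then $Q$ is self-adjoint and, by positive-definiteness of $R_{\xi_i}$ on ${\cal D}$, positive-definite and nonsingular, and $[Q,f]=0$ follows from $[f^2,f]=0$.

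With \eqref{2.1} verified, compatibility \eqref{2.2} amounts to checking $g(fX,fY)=g(X,QY)-\sum_i\eta^i(X)\eta^i(Y)$: for $X,Y\in{\cal D}$ this is $g(fX,fY)=-g(f^2X,Y)=g(X,QY)$ using skew-symmetry of $f$, and the $\xi_i$-components are handled by $f\xi_i=0$ and $Q\xi_i=\xi_i$. To finish I would verify $\Phi=d\eta^i$ for each $i$: $\Phi(X,Y)=g(X,fY)=-g(X,\nabla_Y\xi_i)=g(\nabla_X\xi_i,Y)$ (skew-symmetry), which equals $2\,d\eta^i(X,Y)$ — but I need the normalization so that $\Phi=d\eta^i$ exactly; this is a matter of the $2\,d\eta^i(X,Y)=(\nabla_X\eta^i)(Y)-(\nabla_Y\eta^i)(X)=g(\nabla_X\xi_i,Y)-g(\nabla_Y\xi_i,X)=2g(\nabla_X\xi_i,Y)$ convention used in the paper, giving $d\eta^i(X,Y)=g(\nabla_X\xi_i,Y)=\Phi(X,Y)$. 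Hence \eqref{2.3} holds, so the structure is a weak almost ${\cal S}$-structure, and since the $\xi_i$ are Killing by hypothesis it is weak $f$-{K}-contact; the formula $f=-\nabla\xi_i$ holds by construction and $QX=R_{\xi_i}(X)$ on ${\cal D}$ by definition. I expect the main obstacle to be the circularity around $\nabla_{\xi_i}f=0$: one must establish this purely from the Killing property and skew-symmetry of $f$ before the weak almost ${\cal S}$-structure machinery (Propositions~\ref{thm6.2A}, \ref{lem6.1}) is available, and care is needed to confirm that the Jacobi operator $R_{\xi_i}$ indeed agrees across different $i$ (which again uses $d\eta^i$ all equal).
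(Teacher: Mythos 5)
Your overall skeleton (define $f=-\nabla\xi_i$, check it is independent of $i$ via $d\eta^1=\dots=d\eta^s$ and the Killing property, set $QX=R_{\xi_i}(X)$ on ${\cal D}$ and $Q\xi_i=\xi_i$, verify \eqref{2.1}, \eqref{2.2}, \eqref{2.3}, and conclude from the Killing hypothesis) is exactly the paper's, and those parts are fine. The genuine gap is at the one step you yourself flag as the obstacle: your argument that $\nabla_{\xi_i}f=0$. Differentiating the identity $g(fX,Y)=-g(X,fY)$ along $\xi_i$ yields only $g((\nabla_{\xi_i}f)X,Y)=-g(X,(\nabla_{\xi_i}f)Y)$, i.e.\ it reproduces the skew-symmetry of $\nabla_{\xi_i}f$; it does not give the symmetry you claim, so the conclusion ``symmetric and skew-symmetric, hence zero'' does not follow, and nothing else in your sketch (``Killing-flow preserves $g$'') supplies it. Without that, your expansion of the Jacobi operator leaves the unwanted term $(\nabla_{\xi_i}f)X$ and the key identity $f^2X=-QX$ on ${\cal D}$ is not established. (There is also a sign slip: with the paper's conventions your direct expansion gives $R_{\xi_i}(X)=(\nabla_{\xi_i}f)X-f^2X$, not $+f^2X$.)

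The paper sidesteps this entirely by applying the second-order Killing identity $\nabla_X\nabla_Y\,\xi_i-\nabla_{\nabla_XY}\,\xi_i=R_{X,\xi_i}Y$ in the slot that never produces a $\nabla_{\xi_i}f$ term: putting $Y=\xi_i$ and using $\nabla_{\xi_i}\xi_i=0$ gives
\begin{equation*}
 f^2Y=\nabla_{\nabla_Y\xi_i}\,\xi_i=R_{\xi_i,Y}\,\xi_i=-R_{\xi_i}(Y)=-QY\qquad(Y\in{\cal D}),
\end{equation*}
so \eqref{2.1} holds at once, and positive definiteness of $R_{\xi_i}$ then gives nonsingularity of $Q$ and rank $2n$ for $f$. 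If you prefer to keep your route, the same identity repairs it in one line: it says $(\nabla_Xf)Y=R_{\xi_i,X}Y$ for all $X,Y$, so $X=\xi_i$ gives $(\nabla_{\xi_i}f)Y=R_{\xi_i,\xi_i}Y=0$ — but then you may as well use the identity directly as above. The remaining items you list (well-definedness of $f$ across $i$, $f\xi_j=0$, self-adjointness of $Q$, compatibility \eqref{2.2}, and $\Phi=d\eta^i$ with the paper's normalization of $d$) are correct and match what the paper does or leaves implicit; note also that once \eqref{2.3} is verified you need only the definition of a weak $f$-K-contact structure (weak almost ${\cal S}$ plus Killing $\xi_i$), not the preceding characterization theorem.
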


\begin{proof}
Since ${\xi_i}$ are Killing vector fields, we obtain the property \eqref{2.3}:
\[
 d{\eta^i}(X,Y) = (1/2)\,(g(\nabla_X\,{\xi_i}, Y) - g(\nabla_Y\,{\xi_i}, X)) = - g(\nabla_Y\,{\xi_i}, X) = g(X,{f} Y).
\]
Set $Q X = R_{\,{\xi_i}}(X)$ for some $i$ and all $X\in{\cal D}$.
Since ${\xi_i}$ is a unit Killing vector field, we get $\nabla_{\xi_i}{\xi_i}=0$ and
$\nabla_X\nabla_Y\,{\xi_i} - \nabla_{\nabla_X Y}\,{\xi_i} = R_{\,X,{\xi_i}} Y$, see \cite{YK-1985}.
Thus, ${f}\,{\xi_i}=0$ is true, and
\[
 {f}^2 Y = \nabla_{\nabla_Y\,{\xi_i}}\,{\xi_i} = R_{\,{\xi_i},Y}\,{\xi_i} =-R_{\xi_i}(Y)= -QY,\quad Y\in{\cal D}.
\]
By the conditions,
the tensor $Q$ is positive definite on the subbundle ${\cal D}$.
Therefore, the rank of $f$ restricted to $\cal D$ is $2\,n$.
Set $Q\,{\xi_i} = {\xi_i}\ (1\le i\le s)$. Thus, \eqref{2.1} and \eqref{2.2} are true.
\end{proof}

The sectional curvature of a plane containing unit vectors: $\xi\in{\cal D}^\bot$ and $X\in{\cal D}$ is called \textit{mixed sectional curvature}.
The mixed sectional curvature of an almost ${\cal S}$-manifold is a spacial case of {mixed sectional curvature} of almost product manifolds, for example,~\cite{Rov-Wa-2021}.
Note that the mixed sectional curvature of an $f$-{K}-contact manifold~is constant equal to 1.

\begin{Proposition}
A weak $f$-{K}-contact structure $({f},Q,{\xi_i},{\eta^i},g)$ of constant mixed sectional curvature,
satisfying $K({\xi_i},X)=\lambda>0$ for all ${X}\in{\cal D}$ and some $\lambda=const\in\mathbb{R}$,
is homothetic to an $f$-{K}-contact structure $({f}',{\xi_i},{\eta^i},g')$ after the transformation $(\ref{Tran'},b)$--\eqref{Tran2'}.
\end{Proposition}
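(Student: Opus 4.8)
The plan is to show that under the hypothesis $K(\xi_i,X)=\lambda$ for all unit $X\in\mathcal D$ (and all $i$), the tensor $Q$ restricted to $\mathcal D$ equals $\lambda\,\mathrm{id}_{\mathcal D}$, after which the already-stated Proposition (following the homothety Definition) applies verbatim to conclude that $(f',\xi_i,\eta^i,g')$ defined by \eqref{Tran'} and \eqref{Tran2'} is an $f$-K-contact structure. So the real content is the identification $Q|_{\mathcal D}=\lambda\,\mathrm{id}_{\mathcal D}$, and then checking that the transformed structure is not merely almost $\mathcal S$ but genuinely $f$-K-contact, i.e.\ that normality is not needed — only the $f$-K-contact conditions.

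First I would recall from the proof of Theorem~\ref{prop2.1b} (and \eqref{E-30}, which holds on weak $f$-K-contact manifolds) that $f=-\nabla\xi_i$ and that for the unit Killing field $\xi_i$ one has the standard Jacobi identity $\nabla_{\nabla_Y\xi_i}\xi_i=R_{\xi_i,Y}\xi_i$ for $Y\in\mathcal D$, hence
\begin{align}\label{E-homog-key}
 f^2 Y=\nabla_{\nabla_Y\xi_i}\xi_i=R_{\xi_i,Y}\xi_i=-R_{\xi_i}(Y),\qquad Y\in\mathcal D .
\end{align}
On the other hand \eqref{2.1} gives $f^2 Y=-QY$ on $\mathcal D$, so $QY=R_{\xi_i}(Y)$ for every $Y\in\mathcal D$ and every $i$. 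Now I would use the constant mixed sectional curvature hypothesis: for a unit $X\in\mathcal D$, $g(R_{\xi_i}(X),X)=g(R_{X,\xi_i}\xi_i,X)=K(\xi_i,X)=\lambda$, so $g(QX,X)=\lambda$ for all unit $X\in\mathcal D$. Since $Q$ is self-adjoint and positive definite on $\mathcal D$ (a general property of weak metric $f$-structures stated in the Preliminaries), a symmetric operator whose associated quadratic form is constant on the unit sphere of $\mathcal D$ must be a multiple of the identity; polarizing $g(QX,X)=\lambda g(X,X)$ yields $g(QX,Y)=\lambda g(X,Y)$ for all $X,Y\in\mathcal D$, i.e.\ $Q|_{\mathcal D}=\lambda\,\mathrm{id}_{\mathcal D}$. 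Note $\lambda>0$ is automatic from positive-definiteness, consistent with the statement.

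With $Q|_{\mathcal D}=\lambda\,\mathrm{id}_{\mathcal D}$ in hand, I would invoke the Proposition stated just before this one: a framed weak $f$-structure with $Q|_{\mathcal D}=\lambda\,\mathrm{id}_{\mathcal D}$ gives, via $f'=\sqrt\lambda\,f'$... more precisely via \eqref{Tran'}, a framed $f$-structure $(f',\xi_i,\eta^i)$; and if in addition the original structure is weak almost $\mathcal S$ and \eqref{Tran'}, \eqref{Tran2'} hold, then $(f',\xi_i,\eta^i,g')$ is an almost $\mathcal S$-structure. A weak $f$-K-contact structure is in particular a weak almost $\mathcal S$-structure, so this already delivers an almost $\mathcal S$-structure. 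It remains to check the extra $f$-K-contact condition, namely that the $\xi_i$ are Killing for $g'$. This is immediate because \eqref{Tran2'} only rescales $g$ on $\mathcal D$ by the \emph{constant} $\lambda^{-1/2}$ and leaves $g(\xi_i,\cdot)$ unchanged; since $\mathcal D$ and $\mathcal D^\bot$ are $\xi_i$-invariant (by $[X,\xi_i]\in\mathcal D$ for $X\in\mathcal D$, valid on weak almost $\mathcal S$-manifolds) and $\xi_i$ is Killing for $g$, a constant rescaling of $g$ on the $\xi_i$-invariant subbundle $\mathcal D$ keeps $\pounds_{\xi_i}g'=0$. Equivalently, one checks directly that $\nabla'\xi_i=-f'=-\sqrt\lambda\,f$ satisfies the Killing equation for $g'$, recovering \eqref{E-30} for the primed structure.

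\textbf{Main obstacle.} The only delicate point is making sure that \eqref{E-homog-key} — the Jacobi identity $\nabla_{\nabla_Y\xi_i}\xi_i=R_{\xi_i,Y}\xi_i$ together with $\nabla_{\xi_i}\xi_i=0$ — is legitimately available for weak $f$-K-contact manifolds; but $\nabla_{\xi_i}\xi_i=0$ is the special case $i=j$ of \eqref{2-xi}, and the Jacobi identity is a purely Riemannian identity for any unit Killing field, so no weak-structure-specific subtlety enters. The remaining verification that constant mixed sectional curvature forces $Q|_{\mathcal D}$ to be scalar is elementary linear algebra (a self-adjoint operator with constant Rayleigh quotient on a subspace is scalar there), and the passage to the primed structure is exactly the content of the previously stated Proposition plus the trivial observation that a constant conformal change on a Killing-invariant distribution preserves the Killing property. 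Hence there is no genuinely hard step; the proof is essentially a bookkeeping argument assembling \eqref{2.1}, Theorem~\ref{prop2.1b}, and the homothety Proposition.
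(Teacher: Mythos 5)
Your argument is correct and follows exactly the route the paper's own results set up: identify $Q|_{\mathcal D}=\lambda\,{\rm id}_{\mathcal D}$ from $K(\xi_i,X)=g(QX,X)$ (which is Proposition~\ref{C-5.1}, or equivalently your rederivation via \eqref{E-R1} and the unit-Killing Jacobi identity, the same computation as in the proof of Theorem~\ref{prop2.1b}), then invoke the homothety Proposition to get an almost ${\cal S}$-structure for $(f',\xi_i,\eta^i,g')$. Your extra check that the $\xi_i$ remain Killing for $g'$ (via $g'=\lambda^{1/2}g+(1-\lambda^{1/2})\sum_j\eta^j\otimes\eta^j$ and $\pounds_{\xi_i}\eta^j={\cal N}^{\,(4)}_{ij}=0$ on weak almost ${\cal S}$-manifolds) is precisely the detail needed to upgrade to $f$-K-contact, and it is handled correctly.
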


\begin{Example}\rm
According to Theorem~\ref{prop2.1b}, we can search for examples of weak $f$-{K}-contact (not $f$-{K}-contact) manifolds can be found among Riemannian manifolds of positive sectional curvature admitting $s\ge1$ mutually orthogonal unit Killing vector fields.
Set $s=1$, and let $M^{n+1}$ be an
 ellipsoid with induced metric $g$ of~$\mathbb{R}^{2n+2}\ (n\ge1)$,
\[
 M = \Big\{(u_1,\ldots,u_{2n+2})\in\mathbb{R}^{2n+2}: \sum\nolimits_{\,i=1}^{n+1} u_i^2 + a\sum\nolimits_{\,i=n+2}^{2n+1} u_i^2 = 1\Big\},
\]
where $0<a=const\ne1$. The sectional curvature of $(M,g)$ is positive. It~follows that
\[
 \xi = (-u_2, u_1, \ldots , -u_{n+1}, u_{n}, -\sqrt a\,u_{n+3}, \sqrt a\,u_{n+2}, \ldots , -\sqrt a\,u_{2n+2}, \sqrt a\,u_{2n+1})
\]
is a Killing vector field on $\mathbb{R}^{2n+2}$, whose restriction to $M$ has unit length.
Since $M$ is invariant under the flow of $\xi$, then
$\xi$ is a unit Killing vector field on~$(M,g)$.
\end{Example}

Since $K(\xi_i,\xi_j)=0$ for a weak almost ${\cal S}$-manifold, the~Ricci curvature in the ${\xi_j}$-direction is given~by
\[
 {\rm Ric}({{\xi_j}},{{\xi_j}})=\sum\nolimits_{\,i=1}^{\,2n} g(R_{e_i,\,{{\xi_j}}}\,{{\xi_j}}, e_i),
\]
where $(e_i)$ is a local orthonormal basis of~${\cal D}$.
Next proposition generalizes some particular properties of $f$-{K}-contact manifolds for the case of weak $f$-{K}-contact manifolds.

\begin{Proposition}
Let $M^{2n+s}(f,Q,\xi_i,\eta^i,g)$ be a weak $f$-{K}-contact manifold, then for all $i,j$ we get
\begin{eqnarray}
\label{E-R0}
 && R_{\,{\xi_i},\,Y} = \nabla_Y{f}\quad (Y\in\mathfrak{X}_M), \\
\label{E-R1}
 && R_{\,{\xi_i},Y}\,{{\xi_j}} = {f}^2 Y\quad (Y\in\mathfrak{X}_M), \\
\label{Eq-Ric-f}
 && \Ric^\sharp {\xi_i} = \Div f ,\\
\label{E-R1b}
 && {\rm Ric}({{\xi_i}},{{\xi_j}}) =\tr Q > 0 .
\end{eqnarray}
\end{Proposition}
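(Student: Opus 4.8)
The plan is to establish the four identities in sequence, each feeding into the next. I would start from the weak $f$-K-contact property $\nabla\,\xi_i = -f$ (equation \eqref{E-30}, available since the structure is weak $f$-K-contact by the theorem just above), together with the Lemma's consequence $\nabla_{\xi_i} f = 0$. For \eqref{E-R0}, I would compute the curvature operator directly: for any $Y$,
\[
 R_{\xi_i,Y}\,Z = \nabla_{\xi_i}\nabla_Y Z - \nabla_Y\nabla_{\xi_i} Z - \nabla_{[\xi_i,Y]}Z,
\]
and then use that $\xi_i$ is Killing, so $\pounds_{\xi_i}\nabla = 0$, which gives $R_{\xi_i,Y} = -\nabla_Y(\nabla\xi_i) + (\text{terms with }\nabla_{\xi_i})$. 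Substituting $\nabla\xi_i = -f$ and $\nabla_{\xi_i}f = 0$ collapses this to $R_{\xi_i,Y} = \nabla_Y f$. Equivalently, one can differentiate the Killing identity $\nabla_X\nabla_Y\xi_i - \nabla_{\nabla_X Y}\xi_i = R_{X,\xi_i}Y$ (used already in the proof of Theorem~\ref{prop2.1b}) and rewrite the left side via $\nabla\xi_i=-f$.

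\textbf{The second and third identities.} For \eqref{E-R1}, apply \eqref{E-R0} to $Z = \xi_j$: $R_{\xi_i,Y}\,\xi_j = (\nabla_Y f)\xi_j = \nabla_Y(f\xi_j) - f(\nabla_Y \xi_j) = 0 - f(-fY) = f^2 Y$, using $f\xi_j = 0$ from \eqref{Eq-f-01} and $\nabla\xi_j = -f$ again. For \eqref{Eq-Ric-f}, I would contract \eqref{E-R0}: taking an orthonormal frame $\{E_a\}$,
\[
 \Ric(\xi_i, Y) = \sum\nolimits_a g(R_{E_a,\xi_i}\,Y, E_a) = -\sum\nolimits_a g(R_{\xi_i,E_a}\,Y, E_a) = -\sum\nolimits_a g((\nabla_{E_a}f)Y, E_a).
\]
Since $f$ is skew-symmetric, $g((\nabla_{E_a}f)Y,E_a) = -g(Y,(\nabla_{E_a}f)E_a)$, so $\Ric(\xi_i,Y) = g(Y,\sum_a(\nabla_{E_a}f)E_a) = g(Y,\Div f)$, giving $\Ric^\sharp\xi_i = \Div f$. (One must check the sign/index conventions for $\Ric$ and $\Div$ in the paper, but this is routine.)

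\textbf{The fourth identity.} For \eqref{E-R1b}, set $Y = \xi_j$ in the contraction, or more directly use \eqref{E-R1}: on the distribution ${\cal D}$, the trace of $Y \mapsto R_{\xi_i,Y}\xi_j = f^2 Y$ equals $\tr(f^2|_{\cal D})$. From \eqref{2.1}, $f^2 = -Q + \sum_k \eta^k\otimes\xi_k$, and restricted to ${\cal D}$ this is just $-Q|_{\cal D}$; since $Q\xi_i = \xi_i$, we get $\tr(f^2|_{\cal D}) = -(\tr Q - s)$. Comparing with the definition of the Ricci curvature in the $\xi_j$-direction quoted just before the proposition, $\Ric(\xi_i,\xi_j) = \sum_a g(R_{E_a,\xi_i}\xi_j, E_a) = -\tr(f^2|_{\cal D}) = \tr Q - s$; one then reconciles this with the stated form $\tr Q$ by noting the paper's convention includes or excludes the $s$ flat directions (where $K(\xi_i,\xi_j)=0$ contributes nothing), so effectively $\Ric(\xi_i,\xi_j) = \tr Q$ in their normalization. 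Positivity is immediate since $Q$ is positive definite (noted after \eqref{2.2}), so $\tr Q > 0$.

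\textbf{Main obstacle.} The only genuinely delicate point is the bookkeeping in \eqref{E-R0}: correctly using the Killing condition to commute $\nabla_{\xi_i}$ past a covariant derivative and landing on exactly $\nabla_Y f$ with the right sign, rather than on $-\nabla_Y f$ or on an expression with leftover $\nabla_{\xi_i}$ terms. This is where I would be most careful, double-checking against the already-used formula $\nabla_X\nabla_Y\xi_i - \nabla_{\nabla_XY}\xi_i = R_{X,\xi_i}Y$ for unit Killing fields. Everything downstream (\eqref{E-R1}, \eqref{Eq-Ric-f}, \eqref{E-R1b}) follows by substitution, skew-symmetry of $f$, the algebraic relation $f^2 = -Q + \sum\eta^i\otimes\xi_i$, and positivity of $Q$ — all routine once \eqref{E-R0} is nailed down.
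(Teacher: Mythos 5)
Your proposal is correct and follows exactly the route the paper intends: the survey gives no explicit proof here, but your ingredients are precisely the ones it supplies nearby — the unit-Killing identity $\nabla_X\nabla_Y\,\xi_i-\nabla_{\nabla_XY}\,\xi_i=R_{X,\xi_i}Y$ quoted in the proof of Theorem~\ref{prop2.1b}, combined with $\nabla\xi_i=-f$ and $\nabla_{\xi_i}f=0$, yields \eqref{E-R0} with the right sign, and \eqref{E-R1}, \eqref{Eq-Ric-f}, \eqref{E-R1b} then follow by substitution, skew-symmetry of $\nabla_{E_a}f$, and \eqref{2.1}, just as you do. Your only hesitation, the extra $-s$ in \eqref{E-R1b}, is resolved the way you guessed: the contraction runs only over ${\cal D}$ because $R_{\xi_k,\xi_i}\xi_j=f^2\xi_k=0$, so the value is $\tr(Q|_{\,\cal D})$, and the paper's ``$\tr Q$'' must be read as this ${\cal D}$-trace — consistent with its own computation in Proposition~\ref{C-5.1} and with the classical value $2n$ quoted right after the statement; positivity holds either way since $Q$ is positive definite on ${\cal D}$.
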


\begin{Proposition}\label{C-5.0}
There are no Einstein weak $f$-{K}-contact manifolds with $s>1$. 
\end{Proposition}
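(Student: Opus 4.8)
The plan is to suppose, for contradiction, that $M^{2n+s}(f,Q,\xi_i,\eta^i,g)$ is a weak $f$-K-contact manifold with $s>1$ that is Einstein, so $\Ric = c\,g$ for some constant $c$. First I would evaluate this on pairs of structure vector fields. By \eqref{E-R1b}, $\Ric(\xi_i,\xi_j) = \tr Q$ for all $i,j$ (in particular for $i\ne j$), while the Einstein condition forces $\Ric(\xi_i,\xi_j) = c\,g(\xi_i,\xi_j) = c\,\delta_{ij}$. Taking $i\ne j$ (possible since $s>1$) gives $\tr Q = 0$, which is impossible because $Q$ is self-adjoint and positive definite (as noted after Remark on para-$f$-structures), so $\tr Q > 0$. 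This contradiction finishes the argument.

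The key steps in order: (1) assume Einstein, write $\Ric = c\,g$; (2) invoke \eqref{E-R1b} to get $\Ric(\xi_i,\xi_j)=\tr Q$ independent of $i,j$; (3) compare with $c\,\delta_{ij}$ and use $s>1$ to pick distinct indices $i\ne j$, yielding $\tr Q = 0$; (4) contradict positivity of $Q$. One should also note for consistency that the diagonal entries give $c = \tr Q > 0$, so the Einstein constant would have to be positive, but the off-diagonal comparison already produces the contradiction on its own.

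I do not anticipate a serious obstacle here: the entire content is packaged in formula \eqref{E-R1b}, which is supplied by the preceding proposition, and the rest is the elementary observation that an off-diagonal Ricci value cannot simultaneously equal a fixed positive number and zero. The only point requiring a word of care is making explicit that $s>1$ is exactly what is needed to have a pair $i\ne j$ at hand; for $s=1$ there is no such pair and indeed $f$-K-contact manifolds with $s=1$ (K-contact manifolds) can be Einstein, so the hypothesis is sharp. I would phrase the proof in two or three sentences, essentially: ``If $\Ric=c\,g$, then by \eqref{E-R1b}, for $i\ne j$ we get $0 = c\,g(\xi_i,\xi_j) = \Ric(\xi_i,\xi_j) = \tr Q > 0$, a contradiction.''
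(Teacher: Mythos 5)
Your proof is correct and rests on exactly the same ingredients as the paper's own argument, namely formula \eqref{E-R1b} together with the positivity of $\tr Q$ (Proposition~\ref{C-5.1}). The paper packages the contradiction slightly differently --- it evaluates $\Ric$ on the unit vector $\xi'=(\xi_1+\xi_2)/\sqrt{2}$ and compares $\Ric(\xi',\xi')=2\tr Q$ with the Einstein value $\tr Q$ --- but your direct comparison of the off-diagonal value $\Ric(\xi_i,\xi_j)=\tr Q$ with $c\,\delta_{ij}=0$ for $i\ne j$ is the same argument in a marginally more economical form.
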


\begin{proof}
 A weak $f$-{K}-contact manifold with $s>1$ and
 $\xi'=\frac{\xi_1+\xi_2}{\sqrt 2}$, satisfies the following:
\begin{equation}\label{E-Einst}
  {\rm Ric}(\xi', \xi') = \frac12\sum\nolimits_{i,j=1}^2{\rm Ric}(\xi_i,\xi_j) \overset{\eqref{E-R1b}}= 2\tr Q.
\end{equation}
If the manifold is an Einstein manifold, then for the unit vector field $\xi'$ we obtain ${\rm Ric}(\xi',\xi')={\rm Ric}(\xi_i,\xi_i)=\tr Q$.
Comparing this with \eqref{E-Einst} yields $\tr Q=0$ - a contradiction.
\end{proof}

For a $f$-{K}-contact manifold, equations \eqref{Eq-Ric-f} and \eqref{Eq-div-f} give $\Ric^\sharp(\xi_i)=2\,n\,\bar\xi\ (1\le i\le s)$
and $\Ric(\xi_i,\xi_i)=2\,n$.

\begin{Proposition}\label{C-5.1}
For a weak $f$-{K}-contact manifold,
 the mixed sectional curvature is positive:
\begin{equation*}
 K(\xi_i,{X})=g(Q{X},{X})>0\quad ({X}\in{\cal D},\ \|{X}\|=1),
\end{equation*}
and the Ricci curvature satisfies the following:
${\rm Ric}({\xi}_i,{\xi}_j)>0$ for all $1\le i,j\le s$.
\end{Proposition}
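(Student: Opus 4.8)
The plan is to read off both claims from identity \eqref{E-R1} of the preceding proposition together with the compatibility relation \eqref{2.2} and the positivity of $Q$ established for any weak metric $f$-structure. First I would fix a unit vector $X\in{\cal D}$ and a structure index $i$, and compute the mixed sectional curvature using \eqref{E-R1} with $Y=X$ and $j=i$:
\[
 K(\xi_i,X) = g(R_{\xi_i,X}\,\xi_i,\,X) = g({f}^2 X,\,X).
\]
Since $X\in{\cal D}\subset\bigcap_k\ker\eta^k$, relation \eqref{2.1} gives ${f}^2 X = -QX$, and because ${f}$ is skew-symmetric (a property of any weak metric $f$-structure), $-g({f}^2X,X)=g({f}X,{f}X)$. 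Hence $K(\xi_i,X)=g(QX,X)=\|{f}X\|^2$; as $Q$ is positive definite and ${f}$ has rank $2n$ on ${\cal D}$, this is strictly positive for $X\neq0$, which is the first assertion.

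For the Ricci part, I would sum the mixed sectional curvatures over an orthonormal basis $(e_a)_{a=1}^{2n}$ of ${\cal D}$. By the formula preceding the proposition, ${\rm Ric}(\xi_i,\xi_i)=\sum_a g(R_{e_a,\xi_i}\,\xi_i,e_a)=\sum_a K(\xi_i,e_a)=\sum_a g(Qe_a,e_a)=\tr(Q|_{\cal D})$, which recovers \eqref{E-R1b} and is positive. For the off-diagonal terms ${\rm Ric}(\xi_i,\xi_j)$ with $i\neq j$, I would instead invoke \eqref{E-R1b} directly — it already asserts ${\rm Ric}(\xi_i,\xi_j)=\tr Q$ for all $i,j$ — so positivity follows from $\tr Q>0$, which in turn holds because $Q$ is positive definite on all of $TM$ (the relation $\tr Q = \tr(Q|_{\cal D}) + s$ using $Q\xi_k=\xi_k$ makes this explicit). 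Alternatively, one can argue as in the proof of Proposition~\ref{C-5.0}: apply \eqref{E-R1b} to the unit vector $\xi'=(\xi_i+\xi_j)/\sqrt2$ to get ${\rm Ric}(\xi',\xi')=\frac12\sum_{k,l\in\{i,j\}}{\rm Ric}(\xi_k,\xi_l)=2\tr Q>0$, and then use bilinearity to isolate ${\rm Ric}(\xi_i,\xi_j)$.

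There is essentially no obstacle here: the work was done in establishing \eqref{E-R1} and \eqref{E-R1b} in the previous proposition, and the only inputs are the algebraic identity ${f}^2=-Q$ on ${\cal D}$, skew-symmetry of ${f}$, and positive-definiteness of $Q$. The one point deserving care is the orthogonal decomposition: when summing to form the Ricci curvature in the $\xi_i$-direction, I must sum only over a basis of ${\cal D}$ (not of all of $TM$), which is legitimate because $R_{\xi_i,\xi_j}\,\xi_i=0$ for a weak almost ${\cal S}$-manifold (the leaves of the ${\cal D}^\bot$-foliation are flat, by Corollary~\ref{thm6.2}), so the ${\cal D}^\bot$-directions contribute nothing. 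That is exactly the reduced summation formula for ${\rm Ric}(\xi_j,\xi_j)$ recorded just before the proposition, so no extra justification is needed.
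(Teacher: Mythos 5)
Your argument is correct and essentially the paper's own proof: both read the mixed sectional curvature off \eqref{E-R1} to get $K(\xi_i,X)=g(fX,fX)=g(QX,X)>0$ for unit $X\in{\cal D}$, and both get ${\rm Ric}(\xi_i,\xi_j)>0$ from \eqref{E-R1b} combined with the non-singularity of $f$ on ${\cal D}$ (equivalently, positive definiteness of $Q|_{\cal D}$), the summation over a ${\cal D}$-frame being justified by $K(\xi_i,\xi_j)=0$ exactly as you note. Only fix the sign in your first display: with the paper's curvature convention, $K(\xi_i,X)=g(R_{X,\xi_i}\xi_i,X)=-g(f^2X,X)$, which is what your subsequent sentence actually uses.
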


\begin{proof}
From \eqref{E-R1}, we obtain $K(\xi,X)=g(fX,fX)>0$ for any unit vectors $\xi\in{\cal D}^\bot,\ X\in{\cal D}$.
Using \eqref{2.1} and non-singularity of $f$ on ${\cal D}$, from \eqref{E-R1b} we get
\[
 \Ric({{\xi}_j},{{\xi}_i}) = \tr Q = -\sum\nolimits_{\,p=1}^{2n} g(f^2 e_p,  e_p) = \sum\nolimits_{\,p=1}^{2n} g(f e_p, f e_p)>0,
\]
where $(e_p)$ is a local orthonormal frame of ${\cal D}$,
thus the second statement is valid.
\end{proof}

\begin{Theorem}
A weak $f$-{K}-contact manifold
with conditions $(\nabla\Ric)({\xi_i},\,\cdot)=0\ (1\le i\le s)$
and $\tr Q=const$ is an Einstein manifold and $s=1$.
\end{Theorem}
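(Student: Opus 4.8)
The plan is to exploit the identity $\Ric^\sharp\xi_i = \Div f$ from \eqref{Eq-Ric-f} together with $\Ric(\xi_i,\xi_i)=\tr Q$ from \eqref{E-R1b}, and to turn the hypothesis $(\nabla\Ric)(\xi_i,\cdot)=0$ into a statement that forces $Q=\mathrm{id}$ and then $s=1$. First I would differentiate the relation $\Ric^\sharp\xi_i=\Div f$. Since $(\nabla_X\Ric)(\xi_i,Y)=0$ for all $X,Y$, we have $\nabla_X(\Ric^\sharp\xi_i)=\Ric^\sharp(\nabla_X\xi_i)$, and by \eqref{E-30} this equals $-\Ric^\sharp(fX)$. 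On the other hand $\nabla_X(\Ric^\sharp\xi_i)=\nabla_X(\Div f)$, which is independent of $i$; combined with $\Ric^\sharp\xi_j=\Div f$ this should let me compute $\Ric^\sharp$ on the whole tangent bundle in terms of $f$, $Q$, and derivatives of $f$, using the curvature formulas \eqref{E-R0}, \eqref{E-R1}.

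Next I would extract scalar information. Contracting the differentiated identity, and using $\tr Q=const$ so that $\xi_i(\tr Q)=0$, I expect to get that $\Ric(\xi_i,fX)=0$ for all $X\in\mathfrak X_M$, hence (since $f$ is nonsingular on $\cal D$) that $\Ric(\xi_i,Z)=0$ for all $Z\in\cal D$; this already shows $\cal D$ and $\cal D^\bot$ are mutually Ricci-orthogonal. Then the key computation is to evaluate $\Ric$ on $\cal D$: using \eqref{E-R0}, i.e. $R_{\xi_i,Y}=\nabla_Y f$, and the second Bianchi identity (or directly differentiating \eqref{E-R1}), I want to show $\Ric(X,X)=g(QX,X)\cdot(\text{const})$ for $X\in\cal D$, matching the value $\tr Q$ forced on the $\xi$-directions. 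Comparing the trace $r=\tr_g\Ric$ computed two ways should pin down the Einstein constant as $\tr Q$ and simultaneously force $Q=\mathrm{id}_{\cal D}$, since the weak structure's extra tensor $\widetilde Q$ would otherwise produce a non-proportional term (this is exactly the mechanism in the proof of Theorem~\ref{T-4.1A}).

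Once the manifold is Einstein with $\Ric=(\tr Q)\,g$ and $Q=\mathrm{id}$, so the structure is genuinely $f$-K-contact, I invoke Proposition~\ref{C-5.0}: there are no Einstein weak (in particular, no Einstein classical) $f$-K-contact manifolds with $s>1$, whence $s=1$. I would also double-check consistency with the remark after Proposition~\ref{C-5.1} that $\Ric(\xi_i,\xi_i)=2n$ in the $f$-K-contact case, so that $\tr Q=2n$, confirming the Einstein constant.

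The main obstacle I anticipate is the middle step: showing that $(\nabla\Ric)(\xi_i,\cdot)=0$ together with $\tr Q=const$ actually forces the full Einstein condition (not just Ricci-parallelism in the $\xi$-directions or an $\eta$-Einstein condition). This requires carefully combining the curvature identities \eqref{E-R0}--\eqref{E-R1b} with the covariant derivative of $f$ from Proposition~\ref{lem6.0}, and controlling the $\cal D$-part of $\Ric$; the nonsingularity of $f$ on $\cal D$ and the positivity statements in Proposition~\ref{C-5.1} will be essential to rule out degenerate cases and to conclude $\widetilde Q=0$.
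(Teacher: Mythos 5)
Your endgame (Einstein plus Proposition~\ref{C-5.0} $\Rightarrow s=1$) matches the paper, and your first step is close in spirit, but the heart of the proof is missing and the mechanism you sketch for it would not work. You only \emph{expect} that the hypotheses force the Einstein condition on $\cal D$; concretely, you propose to show $\Ric(X,X)=c\,g(QX,X)$ on $\cal D$ and then to force $Q={\rm id}_{\cal D}$ ``as in Theorem~\ref{T-4.1A}.'' That mechanism is unavailable here: the argument of Theorem~\ref{T-4.1A} rests on normality (${\cal N}^{\,(1)}=0$), which a weak $f$-K-contact manifold need not satisfy, and the theorem's conclusion does not assert $Q={\rm id}$ at all --- nor is that needed, since Proposition~\ref{C-5.0} is already stated for \emph{weak} $f$-K-contact manifolds. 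Also, your route to $\Ric(\xi_i,fX)=0$ via differentiating \eqref{Eq-Ric-f} requires controlling $\nabla_X(\Div f)$, which you do not do; the hypothesis $(\nabla\Ric)(\xi_i,\cdot)=0$ can be used much more directly.

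The actual argument is a short double differentiation of the scalar identity \eqref{E-R1b}. Since $\tr Q=const$ and $(\nabla_Y\Ric)(\xi_i,\xi_i)=0$, differentiating $\Ric(\xi_i,\xi_i)=\tr Q$ and using $\nabla\xi_i=-f$ from \eqref{E-30} gives $\Ric(fY,\xi_i)=0$, hence $\Ric(Y,\xi_i)=(\tr Q)\,\eta^i(Y)$. Differentiating \emph{this} identity at a point where $\nabla_X Y=0$, and using $X(\eta^i(Y))=-g(fX,Y)$ together with the hypothesis once more, yields $\Ric(X,fY)=(\tr Q)\,g(X,fY)$ for all $X,Y$; since $f$ has rank $2n$ on $\cal D$, combining with the values in the $\xi$-directions gives $\Ric=(\tr Q)\,g$, i.e.\ the manifold is Einstein (with constant $\tr Q$, not with any renormalization of $Q$), and Proposition~\ref{C-5.0} then forces $s=1$. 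So the gap is the unproven middle step, and your proposed way of filling it (forcing $\widetilde Q=0$) is both unjustified under the present hypotheses and unnecessary for the conclusion.
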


\begin{proof}
Differentiating \eqref{E-R1b} and using \eqref{E-30} and the conditions, we have
\[
 0 = \nabla_Y\,({\rm Ric}({{\xi_i}},{{\xi_i}})) = (\nabla_Y\,{\rm Ric})({{\xi_i}},{{\xi_i}}) +2\,{\rm Ric}(\nabla_Y\,{{\xi_i}},{{\xi_i}})
 = -2\,{\rm Ric}({f} Y,{{\xi_i}}),
\]
hence ${\rm Ric}(Y,{{\xi_i}}) = (\tr Q)\,{\eta^i}(Y)$.
Differentiating this, then using
\[
 X({\eta^i}(Y))=g(\nabla_X{\xi_i}, Y)=-g({f} X,Y)+g(\nabla_X Y, {\xi_i})
\]
and assuming $\nabla_X Y=0$ at a point $x\in M$, gives
\begin{align*}
 & (\tr Q)\,g({f} X, Y) = \nabla_Y\,({\rm Ric}(X,{{\xi_i}})) \\
 & = (\nabla_Y\,{\rm Ric})(X,{{\xi_i}}) +2\,{\rm Ric}(X, \nabla_Y\,{{\xi_i}}) = -2\,{\rm Ric}(X, {f} Y).
\end{align*}
Thus, ${\rm Ric}(X, {f} Y) = (\tr Q)\,g(X, {f} Y)$.
Hence,
 ${\rm Ric}(X,Y) = (\tr Q)\,g(X,Y)$
for any vectors $X,Y\in T_xM$; therefore, $(M,g)$ is
an Einstein manifold. By Proposition~\ref{C-5.0}, $s=1$.
\end{proof}

The {partial Ricci curvature tensor}, see \cite{RWo-2}, is self-adjoint and is given by
\[
 \Ric^\top(X)=\sum\nolimits_{\,i=1}^s ({R}_{\,X^\top,\,\xi_i}\,\xi_i)^\top.
\]
Note that $\Ric^\top=s\,{\rm id}_{\,\cal D}$ holds for $f$-{K}-contact manifolds.
For weak $f$-{K}-contact manifolds, the tensor $\Ric^\top$ is positive definite, see Proposition~\ref{C-5.1}.
In \cite{RWo-2}, applying the flow of metrics for a $\mathfrak{g}$-foliation, a deformation
of a weak almost ${\cal S}$-structure with positive partial Ricci curvature onto the
the classical structures of the same kind was constructed.

The next theorem, using Proposition~\ref{C-5.1} and the method of \cite{RWo-2}, shows that a weak $f$-{K}-contact manifold can be deformed into an $f$-{K}-contact manifold.

\begin{Theorem}
Let $M^{2n+s}(f_0,Q_{\,0},\xi_i,\eta^i,g_0)$ be a weak $f$-{K}-contact manifold.
Then there exist metrics $g_\tau\ (\tau\in\mathbb{R})$
such that each $(f_t,Q_{\,\tau},\xi_i,\eta^i,g_\tau)$ is a weak $f$-{K}-contact structure satisfying
\begin{equation}\label{E-Q-phi}
 Q_{\,\tau}=(1/s)\Ric^\top_\tau,\quad
 f_\tau\,|_{\,{\cal D}}=T_{\xi_i}^\sharp(\tau).
\end{equation}
Moreover,~$g_\tau$  converges exponentially fast, as $\tau\to-\infty$, to a metric $\hat g$
with $\Ric^\top_{\hat g}=s\,{\rm id}_{\,\cal D}$ that gives an $f$-{K}-contact structure on $M$.
\end{Theorem}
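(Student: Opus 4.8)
The plan is to construct the deformation by running a geometric flow on the metric restricted to the leaves of the $\mathfrak{g}$-foliation tangent to ${\cal D}^\bot$, exactly as in \cite{RWo-2}. First I would fix the distributions ${\cal D}={\cal D}(g_0)$ and ${\cal D}^\bot=\mathrm{span}(\xi_1,\dots,\xi_s)$ once and for all, and keep the $1$-forms $\eta^i$ (hence the splitting $TM={\cal D}\oplus{\cal D}^\bot$) fixed along the flow; only the metric on ${\cal D}$ is allowed to vary, with $g_\tau(\xi_i,\cdot)=g_0(\xi_i,\cdot)=\eta^i$ preserved. The evolution equation is the ``partial Ricci flow'' on ${\cal D}$, $\partial_\tau\, g_\tau|_{\cal D} = -2\bigl(\Ric^\top_\tau - s\,\mathrm{id}_{\cal D}\bigr)^\flat$, a linear-type ODE in $\tau$ at each point once one checks (using that the leaves are totally geodesic and flat, Corollary~\ref{thm6.2}) that the leaf geometry does not couple back; positivity of $\Ric^\top_0$ from Proposition~\ref{C-5.1} guarantees short-time existence, and the fixed point $\Ric^\top=s\,\mathrm{id}_{\cal D}$ is the target $f$-{K}-contact structure.

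Next I would define the deformed structural tensors from the flowing metric: set $f_\tau|_{\cal D}=T^\sharp_{\xi_i}(\tau)$ to be the skew-symmetric part of the splitting tensor $C_{\xi_i}$ with respect to $g_\tau$ (which by the Theorem on the splitting tensor of weak almost ${\cal S}$-manifolds equals $f + Q^{-1}f h^*_i$ in the original structure, and is independent of $i$ because $d\eta^1=\dots=d\eta^s$), and set $f_\tau\xi_i=0$; define $Q_\tau$ on ${\cal D}$ by $Q_\tau=(1/s)\Ric^\top_\tau$ and $Q_\tau\xi_i=\xi_i$. One then verifies that $(f_\tau,Q_\tau,\xi_i,\eta^i,g_\tau)$ satisfies \eqref{2.1} and \eqref{2.2}: the identity $f_\tau^2=-Q_\tau+\sum_i\eta^i\otimes\xi_i$ follows because, as in the proof of Theorem~\ref{prop2.1b}, the Jacobi operator $R_{\xi_i}$ restricted to ${\cal D}$ equals $-f^2$ and summing over $i$ identifies $\Ric^\top$ with $-s\,f^2$ on ${\cal D}$; compatibility \eqref{2.2} holds since $f_\tau$ is $g_\tau$-skew-symmetric by construction. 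That the $\xi_i$ remain Killing for $g_\tau$ and that $d\eta^i$ remains equal to the new fundamental $2$-form (so the weak almost ${\cal S}$- and in fact weak $f$-{K}-contact conditions persist) is where one uses that $\eta^i$ and $d\eta^i$ are unchanged while $\xi_i$ stays orthogonal to ${\cal D}$ and $g_\tau(\xi_i,\xi_j)=\delta_{ij}$, together with the characterization $f_\tau=-\nabla^{g_\tau}\xi_i$ from the Theorem preceding Proposition~\ref{prop2.1b}.

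Finally I would establish convergence: along the partial Ricci flow the quantity $\|\Ric^\top_\tau-s\,\mathrm{id}_{\cal D}\|$ satisfies a differential inequality of the form $\partial_\tau(\cdot)\le -c(\cdot)$ with $c>0$ coming from the strict positivity in Proposition~\ref{C-5.1} (uniform on the compact underlying leaf space, or under the completeness hypothesis implicit in the $\mathfrak{g}$-foliation), so $g_\tau$ converges exponentially as $\tau\to-\infty$ to a limit metric $\hat g$ with $\Ric^\top_{\hat g}=s\,\mathrm{id}_{\cal D}$; for this limit $Q_{\hat g}=(1/s)\Ric^\top_{\hat g}=\mathrm{id}_{\cal D}$, i.e. $\widetilde Q=0$, so the structure is genuinely $f$-{K}-contact. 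I expect the main obstacle to be verifying that the three structural axioms are propagated by the flow — in particular showing the splitting tensor's skew part $f_\tau$ continues to satisfy $f_\tau^2=-Q_\tau+\sum_i\eta^i\otimes\xi_i$ for all $\tau$ rather than only at $\tau=0$; this requires tracking how $C_{\xi_i}$ evolves under the metric change and using $\nabla_{\xi_i}Q=0$ and $\pounds_{\xi_i}Q=0$ type identities (the Lemma at the start of Section~\ref{sec:02}) to keep the self-adjoint part under control. The convergence estimate itself is standard once this structural consistency is in hand.
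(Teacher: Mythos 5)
Your strategy is the same as the paper's: run the partial Ricci flow \eqref{E-GF-Rmix-Phi}, define $f_\tau,Q_\tau$ by \eqref{E-Q-phi}, and exploit the positivity of $\Ric^\top$ from Proposition~\ref{C-5.1}. The genuine gap is where you wave at ``a linear-type ODE \dots short-time existence'': the flow only becomes tractable because the structural identities $\Ric^\top=-\sum_i(T^\sharp_{\xi_i})^2=s\,Q$ on ${\cal D}$, $T^\sharp_{\xi_i}\Ric^\top=\Ric^\top T^\sharp_{\xi_i}$, and hence $\sum_i T^\sharp_{\xi_i}\Ric^\top T^\sharp_{\xi_i}=-(\Ric^\top)^2$, close the system into the \emph{quadratic} (Riccati-type, not linear) ODE $\partial_\tau\Ric^\top=4\,\Ric^\top(\Ric^\top-s\,{\rm id}_{\,\cal D})$, which is then solved explicitly along eigendirections, $\mu_i(\tau)=\mu_i(0)\,s/\bigl(\mu_i(0)+e^{4s\tau}(s-\mu_i(0))\bigr)$. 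That explicit solution is what delivers existence and uniqueness of $g_\tau$ on the relevant $\tau$-range, the exponential rate, and --- after integrating the eigenframe evolution $z_i(\tau)=(\mu_i(\tau)/\mu_i(0))^{1/4}$ --- the convergence of the \emph{metric} itself to the explicit limit $\hat g(e_i(0),e_j(0))=\delta_{ij}\sqrt{\mu_i(0)/s}$. Short-time existence plus an unverified ``no coupling back'' claim does not suffice: you must deduce convergence of $g_\tau$, not merely of $\Ric^\top_\tau$, and the theorem carries no compactness hypothesis, so your appeal to a uniform constant on a ``compact underlying leaf space'' is unwarranted.

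Moreover, your convergence inequality points the wrong way. Setting $u=\mu_i-s$, the ODE gives $\partial_\tau u=4u(u+s)\approx 4su$ near $u=0$, so the fixed point $s\,{\rm id}_{\,\cal D}$ is repelling forward in $\tau$ and attracting only as $\tau\to-\infty$; a bound of the form $\partial_\tau(\cdot)\le -c\,(\cdot)$ would give decay as $\tau\to+\infty$ and says nothing about the limit $\tau\to-\infty$ (forward in $\tau$, eigenvalues below $s$ drift toward $0$, and eigenvalues above $s$ even blow up in finite time). Your identification of $f_\tau$ via the splitting tensor and of $Q_\tau$ via the partial Ricci tensor, and your concern about propagating the structure equations along the flow, do match the paper, which treats that step briefly by the method of \cite{RWo-2}; but without the closed Riccati ODE and its backward-time analysis, both the global-in-$\tau$ existence and the exponential convergence remain unproved in your proposal.
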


\begin{proof} For a weak $f$-K-contact manifold, the
tensor $\Ric^\top$ is positive definite.
Thus, we can apply the method of
\cite[Theorem~1]{RWo-2}.
Consider the partial Ricci  flow,
see \cite{Rov-Wa-2021},
\begin{equation}\label{E-GF-Rmix-Phi}
 \partial_\tau\,g_\tau = -2\,(\Ric^\top)^\flat_{g_\tau} +2\,s\,g^\top_\tau ,
\end{equation}
 where the tensor $g^\top$ is given by $g^\top(X,Y)=g(X^\top,Y^\top)$.
We obtain on ${\cal D}$ for $\tau=0$:
\[
 \Ric^\top = -\sum\nolimits_{\,i} (T_{\xi_i}^\sharp)^2
 = - s\,f^2
 = s\,Q ,
\]
and find $T_{\xi_i}^\sharp \Ric^\top =\Ric^\top T_{\xi_i}^\sharp$, see also \cite{RWo-2}. Thus,
 $\sum\nolimits_i T_{\xi_i}^\sharp \Ric^\top T_{\xi_i}^\sharp = -(\Ric^\top)^2$.
By~the above, we obtain the following ODE:
\[
 {\partial_\tau}\Ric^\top = 4\Ric^\top(\Ric^\top -\,s\,{\rm id}_{\,\cal D}).
\]
According to ODE theory, there exists a unique solution $\Ric^\top_\tau$ for $\tau\in\mathbb{R}$;
thus, a solution $g_\tau$ of \eqref{E-GF-Rmix-Phi} exists for $\tau\in\mathbb{R}$ and it is unique.
Observe that $(f(\tau),\xi_i,\eta^i,Q_\tau)$ with $f(\tau),Q_\tau$ given in \eqref{E-Q-phi}
is a weak $f$-K-contact structure on $(M,g_\tau)$.
By uniqueness of a solution, the flow \eqref{E-GF-Rmix-Phi} preserves the directions of eigenvectors of $\Ric^\top$,
and each eigenvalue $\mu_i>0$ satisfies the ODE
$\dot\mu_i=4\mu_i\,(\mu_i-s)$
with $\mu_i(0)>0$:
This ODE has the following solution:
\[
 \mu_i(\tau)= \frac{\mu_i(0)\,s}{\mu_i(0)+\exp(4\,s\,\tau)(s-\mu_i(0))}
\]
(a function $\mu_i(\tau)$ on $M$ for $\tau\in\mathbb{R}$)
with $\lim\limits_{\tau\to-\infty}\mu_i(\tau)=s$.
Thus, $\lim\limits_{\tau\to-\infty}\Ric^\top(\tau)=s\,{\rm id}_{\,\cal D}$.
Let~$\{e_i(\tau)\}$ be a $g_\tau$-orthonor\-mal frame of ${\cal D}$ of eigenvectors associated with $\mu_i(\tau)$,
we then get
 ${\partial_\tau} e_i = (\mu_i - s) e_i$.
Since $e_i(\tau)=z_i(\tau)\,e_i(0)$ with $z_i(0)=1$, then ${\partial_\tau} \log z_i(\tau) = \mu_i(\tau) - s$.
By the~above, we obtain $z_i(\tau) = (\mu_i(\tau)/\mu_i(0))^{1/4}$. Hence,
\[
 g_t(e_i(0),e_j(0))
 =z_i^{-1}(\tau)z_j^{-1}(\tau)\,g_t(e_i(\tau),e_j(\tau))
 =\delta_{ij}(\mu_i(0)\mu_j(0)/(\mu_i(\tau)\mu_j(\tau)))^{1/4}.
\]
As $\tau\to-\infty$, $g_\tau$ converges to metric $\hat g$ given on the frame $\{e_i(0)\}$ by the equalities
$\hat g(e_i(0),e_j(0))=\delta_{ij}\sqrt{\mu_i(0)/s}$.
\end{proof}


Denote by $\rho(n)-1$ the {maximal number of point-wise linearly independent vector fields on a~sphere}~$S^{n-1}$.
The~topological invariant $\rho(n)$, called the \textit{Adams number}, is
\[
 \rho(({\rm odd})\,2^{4b+c})=8b+2^c\ \ \mbox{for\ any\ integers} \ \ b\ge 0,\ 0\le c\le3,
\]
see Table~1, and the inequality $\rho(n)\le2\,\log_2n+2$ is valid, for example, \cite[Sect.~1.4.4]{Rov-Wa-2021}.

\bigskip

\centerline{Table 1. The number of vector fields on the $(2n-1)$-sphere.}
\smallskip
\begin{tabular}{|c|c|c|c|c|c|c|c|c|c|c|c|c|c|c|c|c|}
 \hline
  $2n-1$ & 1 & 3 & 5 & 7 & 9 & 11 & 13 & 15 & 17 & 19 & 21 & 23 & 25 & 27 & 29 \\
 \hline
  $\rho(2n)-1$ & 1 & 3 & 1 & 7 & 1 &\ 3 &\ 1 &\ 8 &\ 1 &\ 3 &\ 1 &\ 7 &\ 1 &\ 3 &\ 1 \\
 \hline
\end{tabular}

\bigskip\noindent
There are not many theorems in differential geometry that use $\rho(n)$.
Applying the Adams number, we obtain a topological obstruction to the existence of weak $f$-K-contact manifolds.

\begin{Theorem}
For a weak $f$-K-contact manifold $M^{2n+s}(f,Q,\xi_i,\eta^i,g)$
we have $s<\rho(2n)$.
\end{Theorem}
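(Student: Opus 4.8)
The plan is to deduce the inequality $s<\rho(2n)$ from the existence of the structural tensors $f$ and $Q$ restricted to the contact distribution $\mathcal D$, together with the linear-algebraic meaning of the Adams number. First I would observe that on a weak $f$-K-contact manifold the structure vector fields $\xi_1,\ldots,\xi_s$ are pointwise linearly independent (they are orthonormal by compatibility of $g$ and $\eta^i(\xi_j)=\delta^i_j$), and by \eqref{E-30} each $\xi_i$ satisfies $\nabla\xi_i=-f$, so in particular $f$ restricted to $\mathcal D$ is a nonsingular skew-symmetric endomorphism of the $2n$-dimensional Euclidean space $\mathcal D_x$ at every point $x\in M$. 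The key algebraic fact is then that $f|_{\mathcal D}$ and the $\xi_i$'s together produce $s$ pointwise linearly independent vector fields on the unit sphere bundle of $\mathcal D$, or equivalently, that the normal bundle to $\mathcal D^\bot$ inside $TM$ carries $s$ tangent-to-the-fibre sections coming from the $\xi_i$'s acting through $f$; more precisely, one shows that the maps $X\mapsto Q^{-1}f X$ (or suitable combinations built from the $\mathcal N^{(3)}_i$-free case) give $s$ everywhere linearly independent tangent vector fields on each fibre sphere $S^{2n-1}\subset\mathcal D_x$.

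The cleanest route, and the one I would carry out, is to mimic the classical argument for $f$-K-contact manifolds (where one gets $s\le\rho(2n)-1$, i.e. $s<\rho(2n)$, from the $f$-structure): at a point $x$, the distribution $\mathcal D_x\cong\mathbb R^{2n}$ carries the endomorphism $f_x$ with $f_x^3+f_xQ_x=0$ and $Q_x|_{\mathcal D}$ symmetric positive definite. Using the positive-definite square root $Q_x^{1/2}$, conjugate $f_x$ to $J_x:=Q_x^{-1/2}f_xQ_x^{-1/2}$; then $J_x$ is skew-symmetric with $J_x^2=-{\rm id}$ on $\mathcal D_x$ after the further rescaling absorbing the eigenvalues of $Q$, so $J_x$ is an honest complex structure on $\mathbb R^{2n}$. (Alternatively invoke the homothety/deformation machinery: by the deformation theorem just proved, $M$ also carries an honest $f$-K-contact structure with the same $\xi_i$, reducing immediately to the classical case.) One then has $s$ pointwise-independent vector fields $\xi_1,\ldots,\xi_s$ orthogonal to $\mathcal D$, and for a fixed unit $X\in\mathcal D_x$ the vectors $JX, J(\xi_1\text{-twisted copies}),\ldots$ — concretely, the classical argument shows $f$ together with the framing yields $s$ independent tangent vector fields on $S^{2n-1}$. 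Since $S^{2n-1}$ admits at most $\rho(2n)-1$ such fields, we conclude $s\le\rho(2n)-1$, i.e. $s<\rho(2n)$.

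In more detail, the step I would write out is: pick any unit $v\in\mathcal D_x$; the assignment $w\mapsto $ (component of $\nabla_w\xi_i$ in $\mathcal D$, suitably normalized) together with \eqref{E-30} shows the $s$ tangent vectors obtained by transporting $v$ under $f$ and the $\xi_i$-directions form, at the point of $S^{2n-1}$ corresponding to $v$, a linearly independent set of size $s$ tangent to the sphere $S^{2n-1}\subset(\mathcal D_x,g_x)$ — this uses that $f v\perp v$ (skew-symmetry) and that the $s$ copies remain independent because $Q$ is nonsingular, so the "small" tensor $\widetilde Q$ does not destroy independence. Globality over $M$ is not needed; the obstruction is purely pointwise/fibrewise, exactly as in \cite[Sect.~1.4.4]{Rov-Wa-2021}. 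Therefore $S^{2n-1}$ carrying $s$ independent vector fields forces $s\le\rho(2n)-1$.

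The main obstacle is the bookkeeping in passing from the weak structure ($f^3+fQ=0$, $Q\ne{\rm id}$) to a genuine complex structure so that the classical count applies verbatim: one must check that conjugating $f|_{\mathcal D}$ by $Q^{1/2}$ (equivalently, applying the homothetic/deformation normalization) really yields $J$ with $J^2=-{\rm id}$ and that this conjugation does not disturb the framing $\{\xi_i\}$ — which is clear since $Q\xi_i=\xi_i$ and $\widetilde Q\xi_i=0$, so $Q^{1/2}$ fixes $\mathcal D^\bot$ pointwise. Once that normalization is in place, the inequality $s<\rho(2n)$ is immediate from the definition of the Adams number $\rho(2n)$ as one plus the maximal number of pointwise independent vector fields on $S^{2n-1}$.
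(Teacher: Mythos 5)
There is a genuine gap: you never actually produce $s$ pointwise linearly independent tangent vector fields on the sphere $\mathbb{S}^{2n-1}_x\subset{\cal D}_x$, and the construction you sketch cannot do so. By \eqref{E-30} one has $\nabla_w\xi_1=\cdots=\nabla_w\xi_s=-fw$, so ``the component of $\nabla_w\xi_i$ in ${\cal D}$'' is the \emph{same} vector for every $i$; likewise $X\mapsto Q^{-1}fX$, or the single complex structure $J=Q^{-1/2}fQ^{-1/2}$ obtained after your normalization, gives exactly \emph{one} tangent field $v\mapsto Jv$ on the sphere, not $s$ of them. Your claim that ``the $s$ copies remain independent because $Q$ is nonsingular'' is therefore false -- the $s$ copies are literally identical -- and the framing $\xi_1,\dots,\xi_s$ itself is orthogonal to ${\cal D}_x$, hence contributes nothing tangent to $\mathbb{S}^{2n-1}_x$. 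The fallback ``deform to an honest $f$-K-contact structure and reduce to the classical case'' is circular here: the inequality $s<\rho(2n)$ for the classical case $Q={\rm id}$ is precisely the special case of the statement being proven (the paper derives it from the weak case, not the other way around), so it cannot be cited as a known input, and you give no argument for it.

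The paper's proof uses a different mechanism, which is exactly the ingredient your proposal is missing. It works with the splitting tensors $C_{\xi_i}$ of \eqref{E-conulC} and the Riccati equation \eqref{Eq-Ricatti}: for a weak $f$-K-contact manifold $C_\xi$ is skew-symmetric, the Riccati equation splits into $\nabla_\xi T^\sharp_\xi=0$ and $(T^\sharp_\xi)^2=-R_\xi$, and Proposition~\ref{C-5.1} (positivity of the Jacobi operators) is used to conclude $C_\xi(Y)\ne0$ for $\xi\ne0$, $Y\ne0$. The $s$ fields $Y\mapsto C_{\xi_i}(Y)$ are tangent to $\mathbb{S}^{2n-1}_x$ by skew-symmetry, and the Adams bound enters by \emph{contradiction}: if $s\ge\rho(2n)$, these fields are linearly dependent at some $\tilde Y$, which forces $C_\xi(\tilde Y)=\lambda\,\tilde Y$ with $\lambda=0$ for some $\xi=\sum_i\lambda_i\xi_i\ne0$, contradicting the nonvanishing of $C_\xi$. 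So the heart of the matter is the nonsingularity of $C_\xi$ for \emph{arbitrary} nonzero $\xi\in{\cal D}^\bot$ (a statement about the whole family of Jacobi operators, coming from the Riccati equation), not an explicit construction of $s$ independent fields out of $f$; your proposal neither supplies this nonvanishing statement nor any substitute for it, so the inequality does not follow from what you wrote.
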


\begin{proof}
For a weak almost ${\cal S}$-structure, the following Riccati equation is true, e.g.,~\cite{Rov-Wa-2021}:
\begin{align}\label{Eq-Ricatti}
 \nabla_\xi\,C_\xi + (C_\xi)^2 + R_\xi = 0\quad (\xi\in{\cal D}^\bot).
\end{align}
Since the splitting tensor $C_\xi$ is skew-symmetric for a weak $f$-K-contact manifold, i.e., $C_\xi=T^{\sharp}_\xi$ and $A^{\sharp}_\xi=0$, see \eqref{E-CC**},
and the Jacobi operator $R_\xi$ is self-adjoined,
\eqref{Eq-Ricatti} reduces to two equations on ${\cal D}$:
\[
 \nabla_\xi\,T^{\sharp}_\xi=0\ {\rm (the\ skew\textrm{-}symmetric\ part)},\quad
 (T^{\sharp}_\xi)^2 = -R_\xi\  {\rm (the\ self\textrm{-}adjoint\ part)}.
\]
By this and Proposition~\ref{C-5.1}, we get $C_\xi(Y)\ne0$ for any $\xi\ne0$ and $Y\ne0$.
Note that a skew-symmetric linear operator $T^{\sharp}_\xi$ can only have zero real eigenva\-lues.
 Thus, for any point $x\in M$, the continuous vector fields
 $C_{\xi_i}(Y)\ (\|Y\|=1,\ 1 \le i \le s)$,
are tangent to the unit sphere $\mathbb{S}^{2n-1}_x\subset {\cal D}^\bot_x$.
If $s\ge\rho(2n)$, then these vector fields are linearly dependent at some point $\tilde Y\in\mathbb{S}^{2n-1}_x$
with weights $\lambda_i$, i.e., $\sum_i\lambda_i C_{\xi_i}(\tilde Y)=0$.
Then~the splitting tensor has real eigenvector: $C_\xi(\tilde Y)=\lambda\,\tilde Y$,
where $\xi=\sum_i\lambda_i\,\xi_i\ne0$ and
$\lambda=g(C_\xi(\tilde Y), \tilde Y)=0$, a~contradiction.
Thus, the inequality $s<\rho(2n)$~holds.
\end{proof}

\section{Weak $f$-{K}-contact structure equipped with a generalized Ricci soliton}
\label{sec:04}


The following three lemmas are used in the proof of Theorem~\ref{T-5.1} given below.

\begin{Lemma}[see Lemma 3.1 in \cite{G-D-2020}]
\label{L-5.1}
For a weak $f$-{K}-contact manifold
the following holds:
\[
 (\pounds_{{\xi_i}}(\pounds_{X}\,g))(Z,{\xi_i}) = g(X,Z) + g(\nabla_{\xi_i}\nabla_{\xi_i} X, Z) + Z g(\nabla_{\xi_i} X, {\xi_i})
\]
for any $1\le i\le s$ and all vector fields $X,Z$ such that $Z$ orthogonal to ${\cal D}^\bot$.
\end{Lemma}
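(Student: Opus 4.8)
The plan is to compute the left-hand side $(\pounds_{\xi_i}(\pounds_X g))(Z,\xi_i)$ by expanding the Lie derivative of a Lie derivative and repeatedly using the structural identities available for weak $f$-K-contact manifolds: namely $\nabla\xi_i=-f$ (equation \eqref{E-30}), $\nabla_{\xi_i}f=0$ and $\nabla_{\xi_i}Q=\pounds_{\xi_i}Q=0$ (from the Lemma preceding \eqref{E-30}), and $\nabla_{\xi_i}\xi_j=0$ (equation \eqref{2-xi}). First I would write $(\pounds_X g)(Y,W)=g(\nabla_Y X,W)+g(\nabla_W X,Y)$, and then apply $\pounds_{\xi_i}$ to this $(0,2)$-tensor evaluated on $(Z,\xi_i)$. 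Using the general formula $(\pounds_{\xi_i}S)(Z,\xi_i)=\xi_i\big(S(Z,\xi_i)\big)-S([\xi_i,Z],\xi_i)-S(Z,[\xi_i,\xi_i])$ with $S=\pounds_X g$, the last term vanishes, so the whole expression reduces to $\xi_i\big((\pounds_X g)(Z,\xi_i)\big)-(\pounds_X g)([\xi_i,Z],\xi_i)$.

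Next I would expand each piece. For the term $(\pounds_X g)(Z,\xi_i)=g(\nabla_Z X,\xi_i)+g(\nabla_{\xi_i}X,Z)$, I would differentiate along $\xi_i$, turning $\xi_i$-derivatives of $g(\nabla_Z X,\xi_i)$ into covariant-derivative expressions via compatibility of $\nabla$ with $g$ and then invoke $\nabla_{\xi_i}\xi_i=-f\xi_i=0$; similarly $\xi_i\,g(\nabla_{\xi_i}X,Z)=g(\nabla_{\xi_i}\nabla_{\xi_i}X,Z)+g(\nabla_{\xi_i}X,\nabla_{\xi_i}Z)$. For the term $(\pounds_X g)([\xi_i,Z],\xi_i)$, since $Z\perp{\cal D}^\bot$ I would use $[\xi_i,Z]=\nabla_{\xi_i}Z-\nabla_Z\xi_i=\nabla_{\xi_i}Z+fZ$ to rewrite it. The goal is to see that the messy curvature-free pieces recombine: the $g(X,Z)$ term on the right-hand side should emerge precisely from the combination $\|f\|$-type terms $g(fZ,fX)$ together with $g(QX,Z)$-type terms through the weak compatibility relation \eqref{2.2}, i.e. $g(fZ,fX)=g(Z,QX)-\sum_j\eta^j(Z)\eta^j(X)=g(Z,QX)$ since $Z\in{\cal D}$. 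The term $Z\,g(\nabla_{\xi_i}X,\xi_i)$ on the right-hand side should appear as the leftover first-order piece that does not simplify further, coming from $g(\nabla_Z\nabla_{\xi_i}X,\xi_i)+g(\nabla_{\xi_i}X,\nabla_Z\xi_i)$ type rearrangements after peeling off one derivative.

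The main obstacle I anticipate is the careful bookkeeping of which $\nabla f$ and $\nabla Q$ terms genuinely vanish: one only has $\nabla_{\xi_i}f=0$ and $\nabla_{\xi_i}Q=0$, \emph{not} $\nabla f=0$ in all directions, so the $Z$-derivatives of $f$ (appearing through $\nabla_Z\xi_i=-fZ$ and its further differentiation) must be kept and shown to cancel or to be absorbed into $g(X,Z)$ via \eqref{2.2}. A related subtlety is that $Z$ is only assumed orthogonal to ${\cal D}^\bot$ at the point, not that $[\xi_i,Z]\in{\cal D}$; however, for a weak almost ${\cal S}$-structure one has $[X,\xi_i]\in{\cal D}$ for $X\in{\cal D}$ (Proposition~\ref{thm6.1}), which keeps $[\xi_i,Z]$ inside ${\cal D}$ and lets the compatibility relation \eqref{2.2} be applied cleanly. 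Once these cancellations are organized — essentially a weak-structure analogue of the classical $f$-K-contact computation in \cite{G-D-2020} with $Q$ inserted and \eqref{2.2} replacing the classical identity — the three surviving terms $g(X,Z)$, $g(\nabla_{\xi_i}\nabla_{\xi_i}X,Z)$ and $Z\,g(\nabla_{\xi_i}X,\xi_i)$ fall out directly.
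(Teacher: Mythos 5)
Your route is essentially the computation the paper has in mind: its proof is a one-line hint (``uses $\nabla_{\xi_i}\xi_i=0$ and \eqref{E-R1}''), i.e.\ expand the double Lie derivative using the Killing property and $\nabla\xi_i=-f$ from \eqref{E-30}, and evaluate the curvature-type term by \eqref{E-R1}. Your expansion $(\pounds_{\xi_i}S)(Z,\xi_i)=\xi_i(S(Z,\xi_i))-S([\xi_i,Z],\xi_i)$ with $S=\pounds_X g$ is that same computation, and your worry about $\nabla_Z f$-terms is avoidable: they never arise if you differentiate the identity $g(\nabla_{\,\cdot}\,\xi_i,\xi_i)\equiv0$ instead of the formula $\nabla\xi_i=-f$, since $g(\nabla_Z\nabla_X\xi_i,\xi_i)=Z\,g(\nabla_X\xi_i,\xi_i)-g(\nabla_X\xi_i,\nabla_Z\xi_i)=-g(fX,fZ)$; likewise your concern about $[\xi_i,Z]$ is moot (it does lie in ${\cal D}$, because $\nabla_Z\xi_i=-fZ\in{\cal D}$ and $g(\nabla_{\xi_i}Z,\xi_j)=-g(Z,\nabla_{\xi_i}\xi_j)=0$ by \eqref{2-xi}, but the fact is not needed).

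The genuine gap is at your decisive last step. Carrying out your plan, the zero-order term that survives is $g(fX,fZ)$, which by \eqref{2.2} with $Z\in{\cal D}$ equals $g(QX,Z)$ --- exactly as you write --- but you then silently identify $g(QX,Z)$ with $g(X,Z)$. These agree only when $\widetilde Q=Q-{\rm id}$ vanishes, i.e.\ in the classical case; nothing in the computation removes $Q$. The paper's own hint leads to the same term, since by \eqref{E-R1} one has $g(R_{\xi_i,Z}X,\xi_i)=-g(f^2X,Z)=g(QX,Z)$ for $Z\in{\cal D}$. Thus the computation actually gives
\[
 (\pounds_{\xi_i}(\pounds_{X}\,g))(Z,\xi_i)=g(QX,Z)+g(\nabla_{\xi_i}\nabla_{\xi_i}X,Z)+Z\,g(\nabla_{\xi_i}X,\xi_i),
\]
and the displayed right-hand side with $g(X,Z)$ is the classical form of \cite[Lemma 3.1]{G-D-2020}. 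You must either keep $g(QX,Z)$ (equivalently $g(fX,fZ)$) or supply an argument for why $\widetilde Q$ drops out; the appeal to \eqref{2.2} does not do this, so the claimed recombination into the literal statement does not go through for a genuinely weak structure.
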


\begin{proof} This uses
$\nabla_{\xi_i}{\xi_i}=0$ and \eqref{E-R1}.
\end{proof}

\begin{Lemma}[e.g., \cite{G-D-2020}]\label{L-5.2}
Let $\sigma$ be a smooth function on a Riemannian manifold $(M; g)$. Then  for any vector fields $\xi$ and $Z$ on $M$
we have the following:
\[
 \pounds_{{\xi}}(d\sigma\otimes d\sigma)(Z,{\xi}) = Z({\xi}(\sigma))\,{\xi}(\sigma) + Z(\sigma)\,{\xi}({\xi}(\sigma)) .
\]
\end{Lemma}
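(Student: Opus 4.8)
The statement to prove is Lemma~\ref{L-5.2}: for a smooth function $\sigma$ on a Riemannian manifold $(M,g)$ and any vector fields $\xi$ and $Z$,
\[
 \pounds_{\xi}(d\sigma\otimes d\sigma)(Z,\xi) = Z(\xi(\sigma))\,\xi(\sigma) + Z(\sigma)\,\xi(\xi(\sigma)) .
\]
This is a purely formal identity about the Lie derivative acting on a symmetric $(0,2)$-tensor, so the plan is to expand the left-hand side using the Leibniz rule for $\pounds_{\xi}$ and the basic identity $(\pounds_{\xi}\,d\sigma)(X) = \xi(d\sigma(X)) - d\sigma([\xi,X]) = \xi(X(\sigma)) - [\xi,X](\sigma) = X(\xi(\sigma))$, where the last equality uses $[\xi,X](\sigma) = \xi(X(\sigma)) - X(\xi(\sigma))$; equivalently, $\pounds_{\xi}\,d\sigma = d(\pounds_{\xi}\sigma) = d(\xi(\sigma))$ since exterior derivative commutes with Lie derivative.

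The first step is to write $\pounds_{\xi}(d\sigma\otimes d\sigma) = (\pounds_{\xi}\,d\sigma)\otimes d\sigma + d\sigma\otimes(\pounds_{\xi}\,d\sigma)$ by the product rule for Lie derivatives of tensor fields. The second step is to evaluate this on the pair $(Z,\xi)$: the first term contributes $(\pounds_{\xi}\,d\sigma)(Z)\cdot d\sigma(\xi) = Z(\xi(\sigma))\cdot \xi(\sigma)$, and the second term contributes $d\sigma(Z)\cdot(\pounds_{\xi}\,d\sigma)(\xi) = Z(\sigma)\cdot \xi(\xi(\sigma))$. Adding these gives exactly the right-hand side.

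There is essentially no obstacle here; the only thing to be careful about is the sign/bracket convention in the formula $(\pounds_{\xi}\,d\sigma)(X) = X(\xi(\sigma))$, which one can double-check either via Cartan's magic formula $\pounds_{\xi} = \iota_{\xi}\,d + d\,\iota_{\xi}$ applied to the $1$-form $d\sigma$ (noting $d(d\sigma)=0$ and $\iota_{\xi}\,d\sigma = \xi(\sigma)$, so $\pounds_{\xi}\,d\sigma = d(\xi(\sigma))$, hence its value on $X$ is $X(\xi(\sigma))$), or via the coordinate-free definition of the Lie derivative of a $1$-form. Since the identity is tensorial and the computation is a two-line application of the Leibniz rule, I would present it in essentially that compressed form and not dwell on it.
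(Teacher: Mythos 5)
Your computation is correct: the Leibniz rule $\pounds_{\xi}(d\sigma\otimes d\sigma)=(\pounds_{\xi}d\sigma)\otimes d\sigma+d\sigma\otimes(\pounds_{\xi}d\sigma)$ together with $\pounds_{\xi}\,d\sigma=d(\xi(\sigma))$ (Cartan's formula or commutation of $\pounds_{\xi}$ with $d$) gives exactly the stated identity when evaluated on $(Z,\xi)$. The paper itself states this lemma without proof, citing \cite{G-D-2020}, and your two-line argument is the standard one that would be expected there, so nothing further is needed.
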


Recall the following property of $f$-{K}-contact manifolds:
\begin{equation}\label{E-K-Ric-X}
 \Ric(Y,{\xi}) = 0\quad (Y\in{\cal D},\ \xi\in{\cal D}^\bot).
\end{equation}

\begin{Lemma}[see \cite{G-D-2020}]
\label{L-5.3}
Let a weak $f$-{K}-contact manifold
satisfy \eqref{E-K-Ric-X} and admit the generalized gradient Ricci soliton structure.
Then
\[
 \nabla_{\xi_i} \nabla\sigma = (\lambda + c_2\tr Q)\,{\xi_i} - c_1{\xi_i}(\sigma)\,\nabla\sigma .
\]
\end{Lemma}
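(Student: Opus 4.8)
The plan is to unpack the definition of a \emph{generalized gradient Ricci soliton} structure (the equation relating $\nabla\nabla\sigma$, $\Ric^\sharp$, the metric $g$, and the constants $c_1, c_2, \lambda$), apply it to the vector field $\xi_i$, and then simplify each resulting term using the special geometry of a weak $f$-{K}-contact manifold. The soliton equation should read something like $\Hess\sigma + c_1\,d\sigma\otimes d\sigma = (\lambda + c_2\,r)\,g + \Ric$ or, in operator form (contracting one slot with $\xi_i$),
\[
 \nabla_{\xi_i}\nabla\sigma + c_1\,\xi_i(\sigma)\,\nabla\sigma = (\lambda)\,\xi_i + \Ric^\sharp(\xi_i) + (\text{possible } c_2\text{-term}).
\]
First I would write this identity applied to $\xi_i$, being careful about which version of the generalized soliton is being used (the $c_2$ constant multiplies either the scalar curvature or $\tr Q$; since the target has $c_2\tr Q$, the equation presumably involves $c_2\,(\tr Q)\,g$ directly, consistent with $\Ric(\xi_i,\xi_i)=\tr Q$ from \eqref{E-R1b}).

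Next I would evaluate $\Ric^\sharp(\xi_i)$. Here the hypothesis \eqref{E-K-Ric-X}, namely $\Ric(Y,\xi)=0$ for $Y\in{\cal D}$, is the crucial input: it forces $\Ric^\sharp(\xi_i)$ to have no ${\cal D}$-component, so $\Ric^\sharp(\xi_i)\in{\cal D}^\bot$. Combined with \eqref{E-R1b}, which gives $\Ric(\xi_i,\xi_j)=\tr Q$ for all $i,j$ (on a weak $f$-{K}-contact manifold), one gets $\Ric^\sharp(\xi_i)=(\tr Q)\,\bar\xi$ if $\tr Q$ were to distribute that way — but I should double check: $\Ric(\xi_i,\xi_j)=\tr Q$ for \emph{all} pairs means $\Ric^\sharp\xi_i=(\tr Q)\sum_j\xi_j$ only if $\{\xi_j\}$ is orthonormal and spans ${\cal D}^\bot$, which it is. However, the target equation has only $c_2\tr Q\,\xi_i$ (a single $\xi_i$, not $\bar\xi$), so more likely the relevant contraction isolates the $\xi_i\xi_i$ component and the off-diagonal contributions are absorbed elsewhere, or the generalized soliton here is the one where $\Ric$ enters only through its value paired against $\xi_i$ with itself. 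I would reconcile this by taking the soliton equation, feeding in $\xi_i$ in one slot, and then noting that the statement is an \emph{equation of vector fields}; projecting appropriately and using \eqref{E-K-Ric-X} to kill cross terms should leave exactly $(\lambda+c_2\tr Q)\xi_i$ on the curvature/constant side.

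The term $\nabla_{\xi_i}\nabla\sigma$ stays as is on the left. The only remaining work is the $c_1$-term: the soliton equation has $c_1\,d\sigma\otimes d\sigma$, and contracting $d\sigma\otimes d\sigma$ with $\xi_i$ gives $\xi_i(\sigma)\,d\sigma$, i.e.\ the vector field $\xi_i(\sigma)\,\nabla\sigma$; moving it to the right-hand side produces the $-c_1\xi_i(\sigma)\nabla\sigma$ in the claim. So the proof is essentially: (1) state the generalized gradient Ricci soliton identity; (2) apply it in the $\xi_i$ direction; (3) substitute $\Ric^\sharp\xi_i$ using \eqref{E-K-Ric-X} together with \eqref{E-R1b} to replace it by $(\tr Q)$-multiples of structure vector fields; (4) rearrange.

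The main obstacle I anticipate is bookkeeping around exactly which generalized soliton convention is in force and how $\Ric^\sharp\xi_i$ collapses: one must verify that, under \eqref{E-K-Ric-X}, no ${\cal D}$-valued piece of $\Ric^\sharp\xi_i$ survives, and that the ${\cal D}^\bot$-valued piece contributes precisely $c_2\tr Q\,\xi_i$ (not $c_2\tr Q\,\bar\xi$) after the contraction/projection dictated by the soliton equation — this hinges on the precise placement of the $c_2$ scalar-curvature term and on using $\Ric(\xi_i,\xi_i)=\tr Q$ rather than the full off-diagonal relation. Everything else (the Hessian term, the $c_1$ term) is a direct, routine substitution. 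I would therefore spend most of the write-up carefully citing \eqref{E-K-Ric-X} and \eqref{E-R1b} and being explicit that the identity is one of vector fields restricted to testing against directions in ${\cal D}^\bot$ and ${\cal D}$ separately.
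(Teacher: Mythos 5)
Your overall strategy is the only reasonable one and is what the cited source does (note the paper itself gives no proof of Lemma~\ref{L-5.3}; it just refers to \cite{G-D-2020}, which treats the $K$-contact case $s=1$): contract the soliton equation \eqref{E-gg-r-e}, ${\rm Hess}_{\,\sigma}-c_2\Ric=\lambda\,g-c_1\,d\sigma\otimes d\sigma$, with $\xi_i$ to get $\nabla_{\xi_i}\nabla\sigma=\lambda\,\xi_i+c_2\Ric^\sharp\xi_i-c_1\xi_i(\sigma)\nabla\sigma$, then evaluate $\Ric^\sharp\xi_i$ using \eqref{E-K-Ric-X} and \eqref{E-R1b}. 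You correctly guessed the placement of $c_2$ (it multiplies $\Ric$, not the scalar curvature), and the treatment of the Hessian and the $c_1$-term is routine and fine.

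The gap is exactly at the point you flagged and then waved away. The claimed identity is an identity of vector fields, so there is no ``projection'' or ``absorbing of off-diagonal contributions'' available: you must compute the full vector $\Ric^\sharp\xi_i$. By \eqref{E-K-Ric-X} it lies in ${\cal D}^\bot$, and by the paper's own formula \eqref{E-R1b} (valid for \emph{all} $i,j$, since it comes from \eqref{E-R1}) its components are $g(\Ric^\sharp\xi_i,\xi_j)=\tr Q$ for every $j$; hence $\Ric^\sharp\xi_i=(\tr Q)\,\bar\xi$ with $\bar\xi=\sum_j\xi_j$ --- compare the classical statement recorded in the paper, $\Ric^\sharp\xi_i=2n\,\bar\xi$ for $f$-K-contact manifolds. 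So your argument actually proves $\nabla_{\xi_i}\nabla\sigma=\lambda\,\xi_i+c_2(\tr Q)\,\bar\xi-c_1\xi_i(\sigma)\nabla\sigma$, which coincides with the stated conclusion only when $s=1$ (or $c_2\tr Q=0$); using only the diagonal entry $\Ric(\xi_i,\xi_i)=\tr Q$, as you suggest, cannot produce the vector-field identity with $\xi_i$ alone when $s>1$. The honest fix is to prove and use the $\bar\xi$-version (which is all that the application in Theorem~\ref{T-5.1} needs: there one only uses $g(\nabla_{\xi_i}\nabla\sigma,\xi_i)$ and $\nabla_{\xi_i}$-derivatives paired with $Z\in{\cal D}$, and since $\nabla_{\xi_i}\xi_j=0$ and $\bar\xi-\xi_i\perp{\cal D}$, the two versions give identical formulas), or to state the lemma as the $s=1$ result of \cite{G-D-2020}. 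As written, your step (3) does not go through in the generality claimed.
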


The following theorem generalizes
\cite[Theorem 3.1]{G-D-2020}.

\begin{Theorem}\label{T-5.1}
Let a weak $f$-{K}-contact manifold
with the properties $\tr Q=const$ and \eqref{E-K-Ric-X} satisfy the following generalized gradient Ricci soliton equation
with $c_1(\lambda + c_2\tr Q)\ne -1$:
\begin{equation}\label{E-gg-r-e}
 {\rm Hess}_{\,\sigma} - c_2\Ric = \lambda\,g -c_1 d\sigma\otimes d\sigma
\end{equation}
for some $\sigma\in C^\infty(M)$ and $c_1,c_2,\lambda\in\mathbb{R}$.
Then $\sigma=const$, and if $c_2\ne0$, then our manifold is an Einstein manifold and $s=1$.
\end{Theorem}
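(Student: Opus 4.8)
The strategy is to evaluate the generalized gradient Ricci soliton equation \eqref{E-gg-r-e} along the structure vector fields and extract from it an ODE-type constraint on $\sigma$ along the leaves of the $\mathfrak{g}$-foliation tangent to ${\cal D}^\bot$, then force $\sigma$ to be constant. First I would plug $X=Y=\xi_i$ into \eqref{E-gg-r-e}; using \eqref{E-K-Ric-X} one has $\Ric(\xi_i,\xi_i)=\tr Q$ from \eqref{E-R1b}, and the Hessian term becomes $\xi_i(\xi_i(\sigma)) - (\nabla_{\xi_i}\xi_i)(\sigma) = \xi_i(\xi_i(\sigma))$ since $\nabla_{\xi_i}\xi_i=0$ on a weak $f$-{K}-contact manifold. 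Comparing with Lemma~\ref{L-5.3}, which gives $\nabla_{\xi_i}\nabla\sigma=(\lambda+c_2\tr Q)\xi_i - c_1\xi_i(\sigma)\nabla\sigma$, I would take the inner product with $\xi_i$ to get $\xi_i(\xi_i(\sigma)) = (\lambda+c_2\tr Q) - c_1\,\xi_i(\sigma)^2$, while taking the inner product with a unit $X\in{\cal D}$ and using \eqref{E-30} (so that $g(\nabla_X\nabla\sigma,\xi_i)$ relates to $g(fX,\nabla\sigma)$) yields a relation tying the ${\cal D}$-part of $\nabla\sigma$ to $\xi_i(\sigma)$.

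Next I would show $\xi_i(\sigma)$ is constant along the whole manifold. Differentiating $\nabla_{\xi_i}\nabla\sigma = (\lambda+c_2\tr Q)\xi_i - c_1\xi_i(\sigma)\nabla\sigma$ (using $\tr Q=const$, $\nabla\xi_i=-f$, $\nabla_{\xi_i}f=0$) and comparing mixed second derivatives — equivalently, using that $\xi_i$ is Killing so $\pounds_{\xi_i}$ commutes appropriately with $d\sigma$ — one obtains that $\xi_i(\sigma)$ is annihilated by every vector field, hence $\xi_i(\sigma)=:c$ is a global constant for each $i$. Substituting back into the scalar ODE $\xi_i(\xi_i(\sigma))=0$ gives $c^2 = (\lambda+c_2\tr Q)/c_1$ if $c_1\ne0$, but this is in tension with the hypothesis $c_1(\lambda+c_2\tr Q)\ne -1$ only after one more normalization; more robustly, one integrates $\nabla_{\xi_i}\nabla\sigma$ along a leaf and, since the leaves are flat and totally geodesic (Corollary~\ref{thm6.2}) and $M$ carries a compatible complete structure, a growth/compactness or maximum-principle argument forces $c=0$, i.e. $\xi_i(\sigma)=0$. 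Then $\nabla\sigma\in{\cal D}$ everywhere and $\nabla_{\xi_i}\nabla\sigma=(\lambda+c_2\tr Q)\xi_i$; taking the ${\cal D}$-component gives $(\lambda+c_2\tr Q)=0$ would follow, and re-examining, one deduces $\nabla\sigma$ is $\nabla_{\xi_i}$-parallel and of constant length along leaves, then that $\Delta\sigma$ has a sign, so by compactness (or the structure of the generalized soliton, cf.\ \cite{G-D-2020}) $\sigma=const$.

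Once $\sigma=const$, equation \eqref{E-gg-r-e} collapses to $-c_2\Ric = \lambda g$. If $c_2\ne0$ this reads $\Ric = -(\lambda/c_2)g$, so $(M,g)$ is Einstein. By Proposition~\ref{C-5.0}, there are no Einstein weak $f$-{K}-contact manifolds with $s>1$, hence $s=1$, completing the proof.

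\textbf{Main obstacle.} The delicate point is rigorously concluding $\xi_i(\sigma)=0$ (equivalently $\sigma=const$) rather than merely $\xi_i(\sigma)=const$: one must rule out a nonzero constant, which requires either a compactness hypothesis, a completeness-plus-flatness argument on the $\mathfrak{g}$-foliation leaves, or a careful use of the condition $c_1(\lambda+c_2\tr Q)\ne -1$ together with the second-derivative identities from Lemma~\ref{L-5.3} and Lemma~\ref{L-5.2}. The rest — plugging into \eqref{E-gg-r-e} and invoking Proposition~\ref{C-5.0} — is routine once that step is secured.
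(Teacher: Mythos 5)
Your first step (evaluating along $\xi_i$ and extracting $\xi_i(\xi_i(\sigma))=a-c_1\,\xi_i(\sigma)^2$ with $a=\lambda+c_2\tr Q$ from Lemma~\ref{L-5.3}) is fine, but the core of the argument has a genuine gap, which you yourself flag as the ``main obstacle'' without closing it. The claim that $\xi_i(\sigma)$ is a global constant is never actually derived (the ``comparing mixed second derivatives'' step is not carried out), and even granting it, your route to $\sigma=const$ invokes compactness, completeness, or a maximum principle --- none of which appear in the hypotheses of the theorem, so this cannot be the intended mechanism. Moreover, you misplace the role of the hypothesis $c_1(\lambda+c_2\tr Q)\ne-1$: it is not there to exclude a nonzero constant value of $\xi_i(\sigma)$ along the leaves; it enters much earlier, to kill the ${\cal D}$-component of $\nabla\sigma$.

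The paper's proof proceeds differently and needs no compactness. Since $\xi_i$ is Killing, $\pounds_{\xi_i}g=0$ gives $\pounds_{\xi_i}\Ric=0$, so Lie-differentiating the soliton equation \eqref{E-gg-r-e} along $\xi_i$ and evaluating on $(Z,\xi_i)$ with $Z\in{\cal D}$, using Lemma~\ref{L-5.1} with $X=\nabla\sigma$, Lemma~\ref{L-5.2}, and Lemma~\ref{L-5.3}, yields $Z(\sigma)\bigl(1+c_1\,\xi_i(\xi_i(\sigma))+c_1^2\,\xi_i(\sigma)^2\bigr)=0$; substituting $c_1\,\xi_i(\xi_i(\sigma))=c_1a-c_1^2\,\xi_i(\sigma)^2$ (again Lemma~\ref{L-5.3}) reduces this to $Z(\sigma)(c_1a+1)=0$, whence $Z(\sigma)=0$ for all $Z\in{\cal D}$ by the hypothesis $c_1a\ne-1$, i.e.\ $\nabla\sigma\in{\cal D}^\bot$. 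The second step, which your proposal lacks entirely, is to write $\nabla\sigma=\sum_i\xi_i(\sigma)\,\xi_i$, differentiate using \eqref{E-30}, and exploit the symmetry of ${\rm Hess}_{\,\sigma}$: the skew part produces $\bar\xi(\sigma)\,g(fX,Z)=0$, and choosing $Z=fX\ne0$ forces $\bar\xi(\sigma)=0$; rotating the orthonormal frame of ${\cal D}^\bot$ (which preserves the weak $f$-K-contact structure) lets one reach every direction in ${\cal D}^\bot$, so $\nabla\sigma=0$ pointwise, i.e.\ $\sigma=const$, with no global analysis. Your concluding step (from $\sigma=const$ and $c_2\ne0$ to Einstein, then $s=1$ via Proposition~\ref{C-5.0}) agrees with the paper and is correct.
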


\begin{proof}
Set $Z\in{\cal D}$.
Using Lemma~\ref{L-5.1} with $X=\nabla f$, we obtain
\begin{equation}\label{E-G3.6}
 2\,(\pounds_{{\xi_i}}({\rm Hess}_{\,\sigma}))(Z,{\xi_i}) = Z(\sigma) + g(\nabla_{\xi_i}\nabla_{\xi_i}\nabla\sigma, Z)
 + Z g(\nabla_{\xi_i}\nabla\sigma, {\xi_i}).
\end{equation}
Using Lemma~\ref{L-5.3} in \eqref{E-G3.6} and the properties $\nabla_{\xi_i}\,{\xi_i}=0$ and $g({\xi_i},{\xi_i})=1$, yields
\begin{eqnarray}\label{E-G3.7}
\nonumber
 && 2\,(\pounds_{{\xi_i}}({\rm Hess}_{\,\sigma}))(Z,{\xi_i}) = Z(\sigma) + a\,g(\nabla_{\xi_i}\,{\xi_i}, Z) \\
\nonumber
 && -c_1 g(\nabla_{\xi_i} ({\xi_i}(\sigma)\nabla\sigma), Z)
 + a\,Z(g({\xi_i},{\xi_i})) - c_1 Z({\xi_i}(\sigma)^2) \\
 && = Z(\sigma) -c_1 g(\nabla_{\xi_i}({\xi_i}(\sigma)\nabla\sigma), Z) - c_1 Z({\xi_i}(\sigma)^2),
\end{eqnarray}
where $a=\lambda + c_2\tr Q$.
Using Lemma~\ref{L-5.3} with $Z\in{\cal D}$, from \eqref{E-G3.7} we deduce
\begin{equation}\label{E-G3.8}
 2\,(\pounds_{{\xi_i}}({\rm Hess}_{\,\sigma}))(Z,{\xi_i}) = Z(\sigma) -c_1 {\xi_i}({\xi_i}(\sigma))\,Z(\sigma) +c_1^2{\xi_i}(\sigma)^2 Z(\sigma) - c_1 Z({\xi_i}(\sigma)^2).
\end{equation}
Since ${\xi_i}$ is a Killing vector field, we obtain $\pounds_{{\xi_i}}\,g=0$. This implies $\pounds_{{\xi_i}}\Ric=0$.
Using the above fact and applying the Lie derivative to equation \eqref{E-gg-r-e}, gives
\begin{equation}\label{E-G3.9}
 2\,(\pounds_{{\xi_i}}({\rm Hess}_{\,\sigma}))(Z,{\xi_i}) = -2\,c_1(\pounds_{{\xi_i}}(d\sigma\otimes d\sigma))(Z,{\xi_i}).
\end{equation}
Using \eqref{E-G3.8}, \eqref{E-G3.9} and Lemma~\ref{L-5.2}, we obtain
\begin{equation}\label{E-G3.10}
 Z(\sigma)\big(1 + c_1 {\xi_i}({\xi_i}(\sigma)) + c_1^2\,{\xi_i}(\sigma)^2 \big) = 0.
\end{equation}
Using Lemma~\ref{L-5.3}, we get
\begin{equation}\label{E-G3.11}
 c_1 {\xi_i}({\xi_i}(\sigma)) = c_1\,{\xi_i}(g({\xi_i}, \nabla\sigma)) = c_1 g({\xi_i}, \nabla_{\xi_i}\nabla\sigma)
 = c_1 a - c_1^2\,{\xi_i}(\sigma)^2.
\end{equation}
Applying \eqref{E-G3.10} in \eqref{E-G3.11}, we obtain $Z(\sigma)(c_1 a+1)=0$.
This implies $Z(\sigma)=0$ provided by $c_1 a+1 \ne 0$.
Hence, $\nabla\sigma\in{\cal D}^\bot$.
Taking the covariant derivative of $\nabla\sigma = \sum_{\,i}{\xi_i}(\sigma)\,{\xi_i}$ and using \eqref{E-30} and $\bar\xi=\sum_{\,i} \xi_i$, yields
\[
 g(\nabla_X\,\nabla\sigma, Z)= \sum\nolimits_{\,i} X({\xi_i}(\sigma))\,{\eta^i}(Z) -{\bar\xi}(\sigma)\,g({\sigma} X, Z),\quad X,Z\in\mathfrak{X}_M .
\]
From this, by symmetry of ${\rm Hess}_{\,\sigma}$, i.e., $g(\nabla_X\,\nabla\sigma, Z)=g(\nabla_Z\,\nabla\sigma, X)$,
we obtain the equality ${\bar\xi}(\sigma)\,g({\sigma} X, Z)=0$.
For $Z={\sigma} X$ with some $X\ne0$, since $g({\sigma} X, {\sigma} X)>0$, we get ${\bar\xi}(\sigma)=0$.
Replacing $(\xi_i)$ with another orthonormal frame from ${\cal D}^\bot$ preserves the weak $f$-{K}-contact structure and allows reaching any direction ${\tilde\xi}$ in ${\cal D}^\bot$. So $\nabla\sigma=0$, i.e. $\sigma=const$. Thus, from \eqref{E-gg-r-e} and $c_2\ne0$ it follows the claim.
\end{proof}

Motivated by Proposition~\ref{C-5.0}, we consider
\textit{quasi Einstein manifolds}, defined
by
\[
 \Ric(X,Y) = a\,g(X,Y) + b\,\mu(X)\,\mu(Y),
\]
where $a$ and $b$ are nonzero real scalars, and $\mu$ is a 1-form of unit norm.
If $\mu$ is the differential of a function, then we get a \textit{gradient quasi Einstein manifold}.

The following theorem
generalizes (and uses) Theorem~\ref{T-5.1}.

\begin{Theorem}
Let a weak $f$-{K}-contact manifold
with the properties $\tr Q=const$ and \eqref{E-K-Ric-X}
satisfy the following generalized gradient Ricci soliton equation:
\begin{equation}\label{E-gg-r-e2}
 {\rm Hess}_{\,\sigma_1} - c_2\Ric =  \lambda\,g -c_1 d\sigma_2\otimes d\sigma_2
\end{equation}
with $c_1 a \ne -1$, where $a=\lambda + c_2\tr Q$.
Then $\tilde\sigma=\sigma_1 +c_1 a\,\sigma_2$ is constant
and \begin{equation}\label{E-gg-r-e2b}
 -c_1 a\,{\rm Hess}_{\,\sigma_2} = -c_1\,d\sigma_2\otimes d\sigma_2 + c_2\Ric + \lambda\,g .
\end{equation}
Furthermore,

1. if $c_1 a\ne0$, then \eqref{E-gg-r-e2b} reduces to
 ${\rm Hess}_{\,\sigma_2} = \frac1a\,d\sigma_2\otimes d\sigma_2 - \frac{c_2}{c_1 a}\,\Ric - \frac\lambda{c_1 a}\,g$.
By Theorem~\ref{T-5.1}, if $c_1 a\ne-1$, then $\sigma_2=const$; moreover, if $c_2\ne0$, then $(M,g)$ is an Einstein manifold and $s=1$.

2. if $a=0$ and $c_1\ne0$, then \eqref{E-gg-r-e2b} reduces to
 $0 = c_2\Ric -c_1\,d\sigma_2\otimes d\sigma_2 + \lambda\,g$.
If $c_2\ne0$ and $\sigma_2\ne const$, then we get a gradient quasi Einstein manifold.

3. if $c_1=0$, then \eqref{E-gg-r-e2b} reduces to $0 = c_2\Ric + \lambda\,g$, and for $c_2\ne0$ we obtain an Einstein manifold
and $s=1$.
\end{Theorem}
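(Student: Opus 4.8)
The plan is to start from the generalized gradient Ricci soliton equation \eqref{E-gg-r-e2} and rewrite it so that the machinery of Theorem~\ref{T-5.1} applies. First I would observe that on a weak $f$-{K}-contact manifold with $\tr Q=const$, Lemma~\ref{L-5.3} already gives a formula for $\nabla_{\xi_i}\nabla\sigma_1$ and analogously for $\nabla_{\xi_i}\nabla\sigma_2$ (since the lemma's hypotheses depend only on the structure and \eqref{E-K-Ric-X}, not on which potential function appears). Taking the Lie derivative of \eqref{E-gg-r-e2} along $\xi_i$, using that $\xi_i$ is Killing (so $\pounds_{\xi_i}g=0$ and $\pounds_{\xi_i}\Ric=0$), yields $\pounds_{\xi_i}({\rm Hess}_{\,\sigma_1})=-c_1\,\pounds_{\xi_i}(d\sigma_2\otimes d\sigma_2)$ on pairs $(Z,\xi_i)$ with $Z\in{\cal D}$; combining with Lemma~\ref{L-5.1} (applied with $X=\nabla\sigma_1$) and Lemma~\ref{L-5.2} produces, after simplification, a scalar identity of the form $Z(\sigma_1)\big(1+c_1\,\xi_i(\xi_i(\sigma_2))+\dots\big)=\text{(terms in }Z(\sigma_2))$. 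The key algebraic step is to track these terms carefully so that the combination that emerges is exactly $Z(\sigma_1+c_1 a\,\sigma_2)$ times a nonzero factor $(c_1 a+1)$, whence $Z(\tilde\sigma)=0$ for all $Z\in{\cal D}$, i.e. $\nabla\tilde\sigma\in{\cal D}^\bot$.

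Next, to upgrade ``$\nabla\tilde\sigma\in{\cal D}^\bot$'' to ``$\tilde\sigma=const$'' I would repeat the symmetry-of-Hessian argument from the proof of Theorem~\ref{T-5.1}: write $\nabla\tilde\sigma=\sum_i\xi_i(\tilde\sigma)\,\xi_i$, take its covariant derivative using $\nabla\xi_i=-f$ from \eqref{E-30}, and exploit that ${\rm Hess}_{\,\tilde\sigma}$ is symmetric to force $\bar\xi(\tilde\sigma)\,g(fX,Z)=0$; choosing $Z=fX$ with $X\ne0$ and using non-singularity of $f$ on ${\cal D}$ gives $\bar\xi(\tilde\sigma)=0$, and the rotation-of-frame remark (any orthonormal frame of ${\cal D}^\bot$ gives again a weak $f$-{K}-contact structure) lets one reach every direction in ${\cal D}^\bot$, so $\nabla\tilde\sigma=0$.

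With $\tilde\sigma=\sigma_1+c_1 a\,\sigma_2$ constant, ${\rm Hess}_{\,\sigma_1}=-c_1 a\,{\rm Hess}_{\,\sigma_2}$, and substituting this into \eqref{E-gg-r-e2} gives \eqref{E-gg-r-e2b}. The three cases are then bookkeeping: if $c_1 a\ne0$, divide \eqref{E-gg-r-e2b} by $-c_1 a$ to exhibit $\sigma_2$ as the potential of a generalized gradient Ricci soliton with the same constant $a$ (one checks the new ``$c_1$'' equals $\tfrac1a$ so the new product $c_1 a$ equals $1\ne-1$, hence Theorem~\ref{T-5.1}'s hypothesis $c_1 a+1\ne0$ is met and $\sigma_2=const$, and $c_2\ne0$ forces Einstein with $s=1$ by Proposition~\ref{C-5.0}); if $a=0$ but $c_1\ne0$, the Hessian term drops out and \eqref{E-gg-r-e2b} becomes an algebraic relation $c_2\Ric=c_1\,d\sigma_2\otimes d\sigma_2-\lambda g$, which (for $c_2\ne0$ and non-constant $\sigma_2$, so that $d\sigma_2$ can be normalized) is the defining equation of a gradient quasi Einstein manifold; if $c_1=0$, \eqref{E-gg-r-e2b} collapses to $c_2\Ric+\lambda g=0$, i.e. Einstein, and $s=1$ again by Proposition~\ref{C-5.0}.

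The main obstacle I anticipate is the first paragraph's bookkeeping: getting the Lie-derivative identity to collapse cleanly so that the surviving coefficient is precisely $(c_1 a+1)$ multiplying $Z(\tilde\sigma)$ rather than some less tractable combination of $Z(\sigma_1)$ and $Z(\sigma_2)$. This requires using Lemma~\ref{L-5.3} for \emph{both} $\sigma_1$ and $\sigma_2$ (to evaluate $\xi_i(\xi_i(\sigma_j))$ as $c_ja-\text{quadratic}$, mirroring \eqref{E-G3.11}) and then seeing that the cross-terms reorganize into the single linear combination $\sigma_1+c_1 a\,\sigma_2$; everything afterward is a direct adaptation of arguments already spelled out for Theorem~\ref{T-5.1}.
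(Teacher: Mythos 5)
Your route is essentially the paper's: derive the analogue of Lemma~\ref{L-5.3} for the mixed equation, namely $\nabla_{\xi_i}\nabla\sigma_1=a\,\xi_i-c_1\xi_i(\sigma_2)\nabla\sigma_2$, run the Lie-derivative argument of Theorem~\ref{T-5.1} (with Lemmas~\ref{L-5.1} and \ref{L-5.2}) to show $\nabla\tilde\sigma\in{\cal D}^\bot$, then the Hessian-symmetry and frame-rotation step to get $\tilde\sigma=const$, substitute $d\sigma_1=-c_1a\,d\sigma_2$ into \eqref{E-gg-r-e2} to obtain \eqref{E-gg-r-e2b}, and finish by the case analysis; this is exactly the published proof, stated there in compressed form. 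Two details in your write-up need correction, though neither changes the outcome. First, your mechanism for the key collapse invokes ``Lemma~\ref{L-5.3} for both $\sigma_1$ and $\sigma_2$'', but no soliton equation has $\sigma_2$ as Hessian potential, so there is no such formula for $\nabla_{\xi_i}\nabla\sigma_2$; only the $\sigma_1$-analogue above is available, and the cross terms of the type ${\rm Hess}_{\sigma_2}(\xi_i,Z)$ that survive must be eliminated using that formula together with the symmetry of the Hessians and $\nabla\xi_i=-f$ --- this is the content of the paper's ``slightly modifying the proof of Theorem~\ref{T-5.1}'', and your sketch underestimates it. Second, in case 1 the rescaled equation has constants $\hat c_1=-1/a$, $\hat c_2=-c_2/(c_1a)$, $\hat\lambda=-\lambda/(c_1a)$, hence $\hat a=\hat\lambda+\hat c_2\tr Q=-1/c_1$ and $\hat c_1\hat a=1/(c_1a)$: it is not a soliton ``with the same constant $a$'' and the new product is not $1$. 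The hypothesis of Theorem~\ref{T-5.1} for the rescaled equation is $1/(c_1a)\ne-1$, which is precisely the standing assumption $c_1a\ne-1$, so the conclusion stands, but not for the reason you give.
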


\begin{proof} Similarly to Lemma~\ref{L-5.3}, we get
\begin{equation}\label{E-gg-r-e2c}
 \nabla_{\xi_i} \nabla\sigma_1 = a\,\xi_i - c_1\xi_i(\sigma_2)\,\nabla\sigma_2 .
\end{equation}
Using \eqref{E-gg-r-e2c} and Lemmas~\ref{L-5.1} and \ref{L-5.2}, and slightly modifying the proof of Theorem~\ref{T-5.1},
we find that the vector field $\nabla\tilde\sigma$ belongs to ${\cal D}^\bot$, where $\tilde\sigma=\sigma_1 +c_1 a \sigma_2$.
As~in the proof of Theorem~\ref{T-5.1}, we get $d\tilde\sigma=0$, i.e.,
 $d\sigma_1 = -c_1 a\,d\sigma_2$.
Applying this in \eqref{E-gg-r-e2}, we obtain \eqref{E-gg-r-e2b}.
Finally, from \eqref{E-gg-r-e2b}, we obtain the required three cases specified in the theorem.
\end{proof}

\section{Compact weak $f$-{K}-contact manifold equipped with an $\eta$-Ricci soliton}
\label{sec:05}

The following concepts were introduced in \cite{rov-127,rst-57}.

\begin{Definition}
\rm
An \textit{$\eta$-Ricci soliton} is a weak metric $f$-manifold $M({f},Q,{\xi_i},{\eta^i},g)$ satisfying
\begin{align}\label{Eq-1.1}
 \frac12\,\pounds_V\,g + \Ric = \lambda\,g +\mu\sum\nolimits_{\,i}\eta^i\otimes\eta^i +(\lambda+\mu)\sum\nolimits_{\,i\ne j}\eta^i\otimes\eta^j
\end{align}
for some smooth vector field $V$ on $M$ and functions $\lambda, \mu\in C^\infty(M)$.
A weak metric $f$-manifold $M({f},Q,{\xi_i},{\eta^i},g)$ is said to be \textit{$\eta$-Einstein},~if
\begin{align}\label{Eq-2.10}
 \Ric = a\,g + b\sum\nolimits_{\,i}\eta^i\otimes\eta^i + (a+b)\sum\nolimits_{\,i\ne j}\eta^i\otimes\eta^j\ \
 \mbox{for some}\ \ a,b\in C^\infty(M).
\end{align}
\end{Definition}


\begin{Remark}\rm
For a Killing vector field $V$, e.g., $V=\xi_i$ or $V=\bar\xi$,
equation \eqref{Eq-1.1} reduces to \eqref{Eq-2.10}.
Taking the trace of \eqref{Eq-2.10}, gives the
scalar curvature $r = (2\,n + s)\,a + s\,b$.
For $s=1$ and $Q={\rm id}$, \eqref{Eq-2.10} and \eqref{Eq-1.1} give the well-known definitions:
from~\eqref{Eq-2.10} we get
an $\eta$-Einstein structure
$\Ric = a\,g + b\,\eta\otimes\eta$,
and \eqref{Eq-1.1} gives an $\eta$-Ricci soliton
$\frac12\,\pounds_V\,g + \Ric = \lambda\,g + \mu\,\eta\otimes\eta$ on an almost contact metric~manifold. 
\end{Remark}

We~will use the generalized Pohozaev-Schoen identity.

\begin{Lemma}[e.g., \cite{G-O-2013}]
 Let $E$ be a divergence free symmetric {\rm (0,2)}-tensor,
 $E^0=E-\frac1{n}\,(\tr_{\!g}\,E)\,g$, and $V$ a vector field on a compact Riemannian manifold $(M^n, g)$ without boundary. Then
\begin{equation}\label{Eq-3.1}
 \int_{\,M} V(\tr_{\!g}\,E)\,{\rm d}\vol = \frac n2\int_{\,M} g(E^0, \pounds_V\,g)\,{\rm d}\vol .
\end{equation}
\end{Lemma}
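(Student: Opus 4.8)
The plan is to reduce \eqref{Eq-3.1} to two applications of the divergence theorem on the closed manifold $M$, applied to two suitably contracted vector fields; no analytic subtleties arise because $M$ has no boundary.

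\textbf{Step 1 (a divergence-free test field).} First I would contract $E$ with $V$ to form the vector field $W$ defined by $g(W,Y)=E(V,Y)$ for all $Y$. A one-line computation in a local frame gives $\Div W=(\Div E)(V)+g(E,\nabla V)$. Since $E$ is divergence free, the first term vanishes; since $E$ is symmetric, only the symmetric part $\frac12\,\pounds_V g$ of $\nabla V$ pairs with $E$, so $g(E,\nabla V)=\frac12\,g(E,\pounds_V g)$. Integrating over $M$ and using the divergence theorem, $\int_M \Div W\,{\rm d}\vol=0$, hence $\int_M g(E,\pounds_V g)\,{\rm d}\vol=0$.

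\textbf{Step 2 (the trace term).} Applying the divergence theorem to the vector field $(\tr_{\!g}E)\,V$ and expanding $\Div\big((\tr_{\!g}E)\,V\big)=V(\tr_{\!g}E)+(\tr_{\!g}E)\,\Div V$ yields $\int_M V(\tr_{\!g}E)\,{\rm d}\vol=-\int_M (\tr_{\!g}E)\,\Div V\,{\rm d}\vol$. Then it remains to expand the right-hand side of \eqref{Eq-3.1}. Since $E^0$ is the traceless part of $E$, we have $g(E^0,\pounds_V g)=g(E,\pounds_V g)-\frac1n\,(\tr_{\!g}E)\,g(g,\pounds_V g)$, and $g(g,\pounds_V g)=2\,\Div V$ (the metric trace of $\pounds_V g$). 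Therefore
\[
 \frac n2\int_M g(E^0,\pounds_V g)\,{\rm d}\vol
 =\frac n2\int_M g(E,\pounds_V g)\,{\rm d}\vol-\int_M (\tr_{\!g}E)\,\Div V\,{\rm d}\vol .
\]
By Step 1 the first integral on the right vanishes, and by Step 2 the second equals $\int_M V(\tr_{\!g}E)\,{\rm d}\vol$, which is precisely the left-hand side of \eqref{Eq-3.1}.

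The main (and only) obstacle is bookkeeping: keeping track of the normalizations $g(g,\pounds_V g)=2\,\Div V$ and $g(E,\nabla V)=\frac12\,g(E,\pounds_V g)$, the latter relying on the symmetry of $E$, together with the divergence-free hypothesis which kills the $(\Div E)(V)$ term in Step 1. There is no genuine difficulty once the two contracted fields $W$ and $(\tr_{\!g}E)V$ are set up, since compactness without boundary makes every total-divergence integral vanish.
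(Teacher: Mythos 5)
Your argument is correct: both steps check out, including the normalizations $g(g,\pounds_V g)=\operatorname{trace}_g(\pounds_V g)=2\Div V$ and $g(E,\nabla V)=\frac12\,g(E,\pounds_V g)$ (the latter using the symmetry of $E$), and the divergence-free hypothesis kills $(\Div E)(V)$ exactly where you use it. Note, however, that the paper does not prove this lemma at all — it is quoted from Gover--{\O}rsted \cite{G-O-2013} as a known ``generalized Pohozaev--Schoen identity'' — so there is no in-paper proof to compare against. Your self-contained route is worth recording: it makes transparent that the identity is nothing more than the statement $\int_M g(E,\pounds_V g)\,{\rm d}\vol=0$ for a divergence-free symmetric $E$ (your Step 1), combined with the trace bookkeeping that converts $g(E^0,\pounds_V g)$ into $g(E,\pounds_V g)$ minus a multiple of $(\tr_{\!g}E)\Div V$, the latter integrating against $V(\tr_{\!g}E)$ by your Step 2. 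What the citation buys instead is the broader framework of \cite{G-O-2013} (conformal Killing operators, more general ``universal'' identities), which the paper does not actually need for its application; your two integrations by parts suffice for the stated compact, boundaryless case.
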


The~constancy of the scalar curvature $r$ of a Riemannian manifold is important for proving the triviality of compact generalized $\eta$-Ricci solitons, see \cite{G-2023}.
The next theorem extends this result for
 weak $f$-{\rm K}-contact manifolds.

\begin{Theorem}[see \cite{rov-127}]
Let $M^{2n+s}(f, \xi_i, \eta^i, Q, g)$ be a compact weak $f$-{\rm K}-contact manifold with $r=const$ and $\tr Q=const$.
Suppose that $(g,V,\lambda,\nu)$ represents an $\eta$-Ricci soliton. Then $M$ is an $\eta$-Einstein manifold.
\end{Theorem}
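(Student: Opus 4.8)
The plan is to apply the generalized Pohozaev--Schoen identity \eqref{Eq-3.1} to the trace-free part of a suitably chosen divergence-free tensor built from the $\eta$-Ricci soliton equation \eqref{Eq-1.1}. First I would rewrite \eqref{Eq-1.1} as $\tfrac12\,\pounds_V\,g = E$, where $E = \lambda\,g + \mu\sum_i\eta^i\otimes\eta^i + (\lambda+\mu)\sum_{i\ne j}\eta^i\otimes\eta^j - \Ric$. The symmetric $(0,2)$-tensor $E$ is in general not divergence free, so the key preliminary step is to compute $\Div E$ using the contracted second Bianchi identity $\Div\Ric = \tfrac12\,dr$, together with the weak $f$-K-contact identities from Section~\ref{sec:02}: $\nabla\xi_i = -f$ (from \eqref{E-30}), $\Ric^\sharp\xi_i = \Div f$ (from \eqref{Eq-Ric-f}), $\Ric(\xi_i,\xi_j) = \tr Q$ (from \eqref{E-R1b}), and $\eta^i(X) = g(X,\xi_i)$. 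Since $r = const$ and $\tr Q = const$ by hypothesis, and $\nabla_{\xi_i}\xi_j = 0$ by \eqref{2-xi}, the derivatives of the $\eta^i\otimes\eta^j$ terms should collapse to expressions involving only $f$ and the structure forms; I expect $\Div E$ to vanish or to be absorbable, so that $E$ (or a small modification of it) is divergence free.

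Next I would compute $\tr_g E$. Using \eqref{Eq-2.10}-type bookkeeping: $\tr_g g = 2n+s$, $\tr_g\big(\sum_i\eta^i\otimes\eta^i\big) = s$, and $\tr_g\big(\sum_{i\ne j}\eta^i\otimes\eta^j\big) = 0$ because the $\xi_i$ are orthonormal. Hence $\tr_g E = (2n+s)\lambda + s\mu - r$, which is a constant by the hypotheses $r=const$ (and, after the divergence computation forces it, $\lambda,\mu$ turning out locally constant in the relevant directions). Therefore the left-hand side $\int_M V(\tr_g E)\,d\vol$ of \eqref{Eq-3.1} vanishes. The identity then yields $\int_M g(E^0, \pounds_V g)\,d\vol = 0$, and since $\pounds_V g = 2E = 2E^0 + \tfrac{2}{2n+s}(\tr_g E)\,g$, and $g(E^0, g) = \tr_g E^0 = 0$, this reduces to $2\int_M \|E^0\|^2\,d\vol = 0$, forcing $E^0 \equiv 0$. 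That is, $E = \tfrac{1}{2n+s}(\tr_g E)\,g$, i.e. $\Ric = a\,g + b\sum_i\eta^i\otimes\eta^i + (a+b)\sum_{i\ne j}\eta^i\otimes\eta^j$ for appropriate functions $a,b$ (with $a = \lambda - \tfrac{1}{2n+s}\tr_g E$ after rearranging and $b=\mu$), which is exactly the $\eta$-Einstein condition \eqref{Eq-2.10}.

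The main obstacle I anticipate is the divergence computation showing that $E$ (possibly after subtracting a correction term) is genuinely divergence free; this is where the specific geometry of weak $f$-K-contact manifolds must be exploited carefully. In particular one must control $\Div\big(\sum_i\eta^i\otimes\eta^i\big)$ and $\Div\big(\sum_{i\ne j}\eta^i\otimes\eta^j\big)$: using $\nabla_X\eta^i = g(\nabla_X\xi_i,\cdot) = -g(fX,\cdot)$, the divergence of $\eta^i\otimes\eta^j$ produces terms like $-\sum_k g(fe_k,\cdot)\eta^j(e_k) - \eta^i(\cdot)(\Div\xi_j)$, and since $f$ is skew-symmetric on $\mathcal{D}$ and $f\xi_i = 0$ these should pair up to cancel across the symmetrization; one also needs $\Div f$ to interact correctly with $\Ric^\sharp\xi_i$ via \eqref{Eq-Ric-f}. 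A secondary subtlety is ensuring $\lambda$ and $\mu$ are constant (or that their non-constancy contributes a term killed by the integral), which typically follows by taking the divergence of the soliton equation and evaluating on $\xi_i$ and on $\mathcal{D}$; once $\Div E = 0$ is established along with $\tr_g E = const$, the Pohozaev--Schoen argument closes the proof cleanly.
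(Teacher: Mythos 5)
Your overall strategy---apply the Pohozaev--Schoen identity \eqref{Eq-3.1} to a divergence-free, trace-free symmetric tensor and squeeze out the $\eta$-Einstein condition---is the right one, but you apply it to the wrong tensor, and this leaves a genuine gap. You take $E=\lambda\,g+\mu\sum_i\eta^i\otimes\eta^i+(\lambda+\mu)\sum_{i\ne j}\eta^i\otimes\eta^j-\Ric=\frac12\,\pounds_V\,g$. In \eqref{Eq-1.1} the coefficients $\lambda,\mu$ are smooth \emph{functions}, not constants, so $\Div E$ contains uncontrolled $d\lambda$- and $d\mu$-terms, and $\tr_{\!g}E=(2n+s)\lambda+s\mu-r=\Div V$ is not constant; your hope that ``the divergence computation forces $\lambda,\mu$ to be locally constant'' is unsupported (taking the divergence of \eqref{Eq-1.1} only relates $d\lambda,d\mu$ to $\Div(\pounds_V\,g)$, which has no reason to vanish), and constancy of $\lambda,\mu$ is neither needed for nor implied by the conclusion, since in \eqref{Eq-2.10} the functions $a,b$ may be non-constant. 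There is also a mismatch at the very end: even granting $E^0=0$, your identification $a=\lambda-\frac{1}{2n+s}\tr_{\!g}E$, $b=\mu$ makes the cross-term coefficient $\lambda+\mu$ differ from $a+b$ unless $\tr_{\!g}E=0$, so the resulting identity is not literally \eqref{Eq-2.10} when $s>1$.

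The paper's proof avoids all of this by building the tensor from the hypotheses $r=const$ and $\tr Q=const$ rather than from the soliton data: it sets $E:=\Ric-\frac{r-s\tr Q}{2n}\,g+\frac{r-(2n+s)\tr Q}{2n}\sum_i\eta^i\otimes\eta^i-\tr Q\sum_{i\ne j}\eta^i\otimes\eta^j$, whose coefficients are genuine constants. This $E$ is traceless by construction, satisfies $E(\xi_i,\xi_j)=0$ by \eqref{E-R1b}, and is divergence free because $\Div\Ric=\frac12\,dr=0$, $\Div g=0$ and $\Div(\eta^i\otimes\eta^j)=0$ (here your computation is on the right track: $\xi_i$ Killing with $\nabla\xi_i=-f$ skew gives $\sum_k(\nabla_{e_k}\eta^i)(e_k)=0$, while \eqref{2-xi} kills the $(\nabla_{\xi_i}\eta^j)$-contribution). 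Then $V(\tr_{\!g}E)=0$ holds trivially, \eqref{Eq-3.1} gives $\int_M g(E,\pounds_V\,g)\,{\rm d}\vol=0$, and rewriting the soliton equation as $\frac12\,\pounds_V\,g+E$ equal to a combination of $g$, $\eta^i\otimes\eta^i$ and $\eta^i\otimes\eta^j$ as in \eqref{Eq-3.5}, then pairing with $E$ and using $g(E,g)=\tr_{\!g}E=0$ and $g(E,\eta^i\otimes\eta^j)=E(\xi_i,\xi_j)=0$, yields $\int_M g(E,E)\,{\rm d}\vol=0$, hence $E=0$, which is exactly \eqref{Eq-2.10} with $a=\frac{r-s\tr Q}{2n}$ and $b=\frac{(2n+s)\tr Q-r}{2n}$. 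If you replace your choice of $E$ by this one, the rest of your argument closes without any assumption on $\lambda,\mu$.
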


\begin{proof}
Taking the trace of \eqref{Eq-2.10}, yields
 $r = (2n + s)\,\alpha + s\,\beta$.
Using \eqref{E-R1b} in \eqref{Eq-2.10}, we get $\alpha + \beta = \tr Q$.
Solution of the linear system is
 $\alpha = \frac{r-s\tr Q}{2\,n}$ and $\beta = \frac{(2\,n+s)\tr Q - r}{2\,n}$.
Define a traceless
(0,2)-tensor
\begin{equation*}
 E := \Ric -\frac{r-s\tr Q}{2\,n}g +\frac{r-(2\,n+s)\tr Q}{2\,n}\sum\nolimits_i\eta^i\otimes\eta^i
 -\tr Q\sum\nolimits_{i\ne j}\eta^i\otimes\eta^j .
\end{equation*}
Using this, we express $\eta$-Ricci soliton \eqref{Eq-1.1} as
\begin{align}\label{Eq-3.5}
\nonumber
 \frac12\,{\cal L}_V\,g + E & = \big(\lambda -\frac{r-s\tr Q}{2\,n}\big)g
 +\big(\nu +\frac{r-(2n+s)\tr Q}{2\,n}\big)\sum\nolimits_i\eta^i\otimes\eta^i \\
 & +(\lambda+\nu-\tr Q)\sum\nolimits_{i\ne j}\eta^i\otimes\eta^j.
\end{align}
We get the equality
 $\Div_g E=0$.
Since
$E$ is traceless, we obtain $E^0=E$ and $V(\tr_{\!g} E)=0$.
Then, applying
\eqref{Eq-3.1}, gives
 $\int_{\,M} g(E, {\cal L}_V\,g)\,{\rm d}\vol =0$.
Using \eqref{Eq-3.5} and \eqref{E-R1b} in the above integral formula,
and using the following equalities:
$g(E,g)= \tr_{\!g} E=0$ and
$g(E, \eta^i\otimes\eta^j)= E(\xi_i, \xi_j)=0$,
we get
 $\int_{\,M} g(E, E)\,{\rm d}\vol =0$.
By this,
$E=0$ is true.
Therefore, our manifold is an $\eta$-{Einstein} manifold.
\end{proof}

\section{Geometry of weak $\beta$-Kenmotsu $f$-manifolds}
\label{sec:02-f-beta}

The next definition generalizes the notions of $\beta$-Kenmotsu manifolds and Kenmotsu $f$-manifolds ($\beta=1,\ s>1$), see \cite{ghosh2019ricci,SV-2016}, and weak $\beta$-Kenmotsu manifolds ($s=1$), see \cite{rov-126}.

\begin{Definition}
\rm
A normal (i.e., ${\cal N}^{\,(1)} = 0$) weak metric $f$-manifold
will be called a \textit{weak $\beta$-Kenmotsu $f$-manifold} (a \textit{weak Kenmotsu $f$-manifold} when $\beta\equiv1$) if
\begin{align}\label{2.3-f-beta}
 (\nabla_{X}\,{f})Y=\beta\{g({f} X, Y)\,\bar\xi -\bar\eta(Y){f} X\}\quad (X,Y\in\mathfrak{X}_M),
\end{align}
where
$\bar\xi=\sum\nolimits_{\,i}\xi_i$, $\bar\eta=\sum\nolimits_{\,i}\eta^i$,
and $\beta$ is a nonzero smooth function on $M$.
\end{Definition}

Note that $\bar\eta(\xi_i)=\eta^i(\bar\xi)=1$ and $\bar\eta(\bar\xi)=s$.
Taking $X=\xi_j$ in \eqref{2.3-f-beta} and using ${f}\,\xi_j=0$, we get $\nabla_{\xi_j}{f}=0$, which implies $f(\nabla_{\xi_i}\,\xi_j)=0$,
and so $\nabla_{\xi_i}\,\xi_j\in {\cal D}^\bot$. This and the 1st equality in \eqref{Eq-normal-2} give
\begin{align}\label{Eq-normal-3}
 \nabla_{\xi_i}\,\xi_j =0\quad  (1\le i,j\le s),
\end{align}
thus, ${\cal D}^\bot$ (of weak $\beta$-Kenmotsu $f$-manifolds) is tangent to a totally geodesic
$\mathfrak{g}$-foliation with an abelian Lie~algebra.

\begin{Lemma}
For a weak $\beta$-Kenmotsu $f$-manifold
the following formula holds:
\begin{align}\label{2.3b}
 \nabla_{X}\,\xi_i = \beta\{X -\sum\nolimits_{\,j}\eta^j(X)\,\xi_j\}\quad (1\le i\le s,\quad X\in\mathfrak{X}_M).
\end{align}
\end{Lemma}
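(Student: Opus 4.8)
The plan is to differentiate the defining relation \eqref{2.3-f-beta} in a suitable direction and then separate the ${\cal D}$-component and the ${\cal D}^\bot$-component of $\nabla_X\xi_i$. Since we have the structural identities $f\xi_i=0$ and $\eta^i\circ f=0$ from \eqref{Eq-f-01}, the natural thing to differentiate is $f\xi_i=0$.

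\textbf{Step 1: the ${\cal D}$-component.} First I would apply $\nabla_X$ to $f\xi_i=0$ for an arbitrary $X\in\mathfrak{X}_M$, which gives $(\nabla_X f)\xi_i=-f(\nabla_X\xi_i)$. On the other hand, setting $Y=\xi_i$ in \eqref{2.3-f-beta} and using $\eta^i\circ f=0$ (so $g(fX,\xi_i)=0$) together with $\bar\eta(\xi_i)=\sum_j\eta^j(\xi_i)=1$ yields $(\nabla_X f)\xi_i=-\beta\,fX$. Comparing the two expressions, $f\big(\nabla_X\xi_i-\beta X\big)=0$, so $\nabla_X\xi_i-\beta X\in\ker f={\cal D}^\bot=\operatorname{span}\{\xi_1,\dots,\xi_s\}$; in particular the ${\cal D}$-part of $\nabla_X\xi_i$ is $\beta\,X^\top$.

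\textbf{Step 2: the ${\cal D}^\bot$-component.} It remains to compute $g(\nabla_X\xi_i,\xi_j)$ for all $i,j$. Since $\nabla_{\xi_i}\xi_j=0$ by \eqref{Eq-normal-3} and $\eta^j(Z)=g(Z,\xi_j)$, we have $g(\nabla_{\xi_i}X,\xi_j)=\xi_i(\eta^j(X))$; substituting this into $\eta^j([\xi_i,X])=g(\nabla_{\xi_i}X,\xi_j)-g(\nabla_X\xi_i,\xi_j)$ and invoking that normality forces ${\cal N}^{\,(4)}_{ij}=0$ (Proposition~\ref{thm6.1}, i.e. $\xi_i(\eta^j(X))=\eta^j([\xi_i,X])$) gives $g(\nabla_X\xi_i,\xi_j)=0$ for every $X$ and all $i,j$. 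Therefore $\nabla_X\xi_i-\beta X=\sum_j g(\nabla_X\xi_i-\beta X,\xi_j)\,\xi_j=-\beta\sum_j\eta^j(X)\,\xi_j$, which is exactly \eqref{2.3b}.

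\textbf{Main obstacle.} The computation is essentially routine once the right identity is differentiated; the only point requiring care is the bookkeeping of the ${\cal D}^\bot$-component, where one must use the consequences of normality (${\cal N}^{\,(4)}_{ij}=0$ and $\nabla_{\xi_i}\xi_j=0$) rather than the defining equation \eqref{2.3-f-beta} alone. Note also that no derivative of the function $\beta$ enters, since we differentiate $f\xi_i$ and not $\beta$ itself.
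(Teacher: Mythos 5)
Your proof is correct and follows essentially the same route as the paper: set $Y=\xi_i$ in \eqref{2.3-f-beta} (together with $\nabla_X(f\xi_i)=0$) to get $f(\nabla_X\xi_i-\beta X)=0$, hence $\nabla_X\xi_i-\beta X\in{\cal D}^\bot$, and then kill the $\xi_j$-components using the normality consequences. The only cosmetic difference is in that last step: you invoke ${\cal N}^{\,(4)}_{ij}=0$ directly together with \eqref{Eq-normal-3}, while the paper uses \eqref{Eq-normal-2} and \eqref{Eq-normal-3}; both come from Proposition~\ref{thm6.1}, and your version cleanly gives $g(\nabla_X\xi_i,\xi_j)=0$ for arbitrary $X$.
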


\begin{proof}
Taking $Y=\xi_i$ in \eqref{2.3-f-beta} and using $g({f} X, \xi_i)=0$ and $\bar\eta(\xi_i)=1$, we get ${f}(\nabla_X\,\xi_i -\beta X)=0$.
Since ${f}$ is non-degenerate on ${\cal D}$ and has rank $2\,n$, we get $\nabla_X\,\xi_i -\beta X = \sum\nolimits_{\,p}c^p\,\xi_p$.
The inner~product with $\xi_j$ gives $g(\nabla_X\,\xi_i,\xi_j) = \beta\,g(X,\xi_j) - c^j$.
Using \eqref{Eq-normal-2} and \eqref{Eq-normal-3}, we find $g(\nabla_X\,\xi_i,\xi_j)=g(\nabla_{\xi_i}X,\xi_j)=0$; hence, $c^j=\beta\,\eta^j(X)$.
This proves~\eqref{2.3b}.
\end{proof}


\begin{Theorem}
A weak metric $f$-manifold
is a weak $\beta$-Kenmotsu $f$-mani\-fold
if and only if the following conditions are valid:
\begin{align}\label{Eq-almost-K}
 {\cal N}^{\,(1)} = 0,\quad
 d\eta^i = 0,\quad
 d\Phi = 2\,\beta\,\bar\eta\wedge\Phi,\quad
 {\cal N}^{\,(5)}(X,Y,Z)=2\,\beta\,\bar\eta(X) g(f Y, \widetilde Q Z).
\end{align}
\end{Theorem}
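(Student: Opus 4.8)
The plan is to prove the two implications separately, using the covariant-derivative formula of Proposition~\ref{lem6.0} as the main bridge between the defining identity \eqref{2.3-f-beta} and the four conditions in \eqref{Eq-almost-K}.

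\textbf{($\Rightarrow$) Necessity.} Assume $(\nabla_X f)Y = \beta\{g(fX,Y)\,\bar\xi - \bar\eta(Y)\,fX\}$ and ${\cal N}^{\,(1)}=0$. First I would compute $d\eta^i$. Since the structure is normal, Proposition~\ref{thm6.1} gives $[X,\xi_i]\in{\cal D}$ for $X\in{\cal D}$ and ${\cal N}^{\,(4)}_{ij}=0$; combined with formula \eqref{2.3b} (which follows from \eqref{2.3-f-beta} as in the Lemma just proved) one gets $g(\nabla_X\xi_i,Y) = \beta(g(X,Y)-\sum_j\eta^j(X)\eta^j(Y))$, which is symmetric in $X,Y$, hence $d\eta^i(X,Y)=\tfrac12(g(\nabla_X\xi_i,Y)-g(\nabla_Y\xi_i,X))=0$. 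Next, for $d\Phi$ I would use the coboundary-type expression $3\,d\Phi(X,Y,Z) = g((\nabla_X f)Z,Y)+g((\nabla_Y f)X,Z)+g((\nabla_Z f)Y,X)$ (which appeared in the proof of Theorem~\ref{thm6.2D}), substitute \eqref{2.3-f-beta} into each term, and simplify using skew-symmetry of $f$ and $g(fX,Y)=\Phi(X,Y)$; the result should collect exactly into $2\beta\,(\bar\eta\wedge\Phi)(X,Y,Z)$ after matching the combinatorial factors. Finally, for the ${\cal N}^{\,(5)}$ identity I would take the master formula \eqref{3.1-new} of Proposition~\ref{lem6.0}, insert the already-established facts ${\cal N}^{\,(1)}=0$, ${\cal N}^{\,(2)}_i(Y,Z)=\eta^i([\widetilde QY,fZ])$ (from \eqref{Eq-normal}, and which vanishes here once one checks the relevant brackets lie in ${\cal D}$), $d\eta^i=0$, and $d\Phi = 2\beta\,\bar\eta\wedge\Phi$, and also the left-hand side $2g((\nabla_X f)Y,Z)$ from \eqref{2.3-f-beta}; solving for ${\cal N}^{\,(5)}(X,Y,Z)$ and using $3\,d\Phi(X,fY,fZ)$ together with $f^2 = -Q + \sum_i\eta^i\otimes\xi_i$ to rewrite the $fY,fZ$ arguments, I expect everything except the term $2\beta\,\bar\eta(X)\,g(fY,\widetilde QZ)$ to cancel.

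\textbf{($\Leftarrow$) Sufficiency.} Conversely, assume the four conditions in \eqref{Eq-almost-K}. Since ${\cal N}^{\,(1)}=0$ the manifold is normal, so Proposition~\ref{thm6.1} applies. Again feed $d\eta^i=0$, $d\Phi = 2\beta\,\bar\eta\wedge\Phi$, the normality consequences from \eqref{Eq-normal}, and the prescribed value of ${\cal N}^{\,(5)}$ into the master formula \eqref{3.1-new}. After expanding $3\,d\Phi(X,fY,fZ) = 6\beta\,(\bar\eta\wedge\Phi)(X,fY,fZ)$, noting $\bar\eta(fY)=\bar\eta(fZ)=0$, so only the $\bar\eta(X)\,\Phi(fY,fZ)$-piece survives, and using $\Phi(fY,fZ) = g(fY,f^2Z) = -g(fY,QZ) = -g(fY,Z)-g(fY,\widetilde QZ)$ together with $3\,d\Phi(X,Y,Z)=6\beta(\bar\eta\wedge\Phi)(X,Y,Z)$, the right-hand side of \eqref{3.1-new} should collapse to $2\beta\{g(fX,Y)\bar\eta(Z)-g(fX,Z)\bar\eta(Y)\}$, which is precisely $2g((\nabla_X f)Y,Z)$ for the claimed \eqref{2.3-f-beta}; since $g$ is nondegenerate, \eqref{2.3-f-beta} follows.

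\textbf{Main obstacle.} The delicate part is the bookkeeping in the ${\cal N}^{\,(5)}$ computation: one must carefully track the $f^2 = -Q + \sum_i\eta^i\otimes\xi_i$ substitutions (the extra $\sum_i\eta^i\otimes\xi_i$ terms, absent in the classical $Q=\mathrm{id}$ case), verify that the ${\cal N}^{\,(2)}_i$ contribution from \eqref{Eq-normal} genuinely vanishes here (this needs $\widetilde Q X\in{\cal D}$ and the bracket relations of Proposition~\ref{thm6.1}), and confirm that the residual $\widetilde Q$-dependent terms assemble into exactly $2\beta\,\bar\eta(X)\,g(fY,\widetilde QZ)$ and nothing else — in particular the skew-symmetry of ${\cal N}^{\,(5)}$ in $Y,Z$ must be respected, which is a useful consistency check since $g(fY,\widetilde QZ)$ is indeed skew in $Y,Z$ (as $f$ is skew and $\widetilde Q$ is self-adjoint and commutes with $f$). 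I expect the classical-case argument for $\beta$-Kenmotsu $f$-manifolds to serve as the scaffolding, with the weak structure contributing only these controlled $\widetilde Q$-corrections.
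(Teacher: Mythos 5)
Your proposal is correct and follows essentially the same route as the paper: establish $d\eta^i=0$ from the symmetry of $g(\nabla_X\xi_i,Y)$ via \eqref{2.3b}, compute $d\Phi=2\beta\,\bar\eta\wedge\Phi$ from the coboundary formula with \eqref{2.3-f-beta}, and then use the master identity \eqref{3.1-new} in both directions to extract (respectively re-insert) the value of ${\cal N}^{\,(5)}$, with the $f^2=-Q+\sum_i\eta^i\otimes\xi_i$ and $\bar\eta\circ f=0$ bookkeeping exactly as in the paper's computation. The only cosmetic difference is your detour through ${\cal N}^{\,(2)}_i(Y,Z)=\eta^i([\widetilde QY,fZ])$: once $d\eta^i=0$ is known, ${\cal N}^{\,(2)}_i$ vanishes immediately from its definition ${\cal N}^{\,(2)}_i=2\,d\eta^i(f\,\cdot,\cdot)-2\,d\eta^i(f\,\cdot,\cdot)$, so no bracket check is needed.
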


\begin{proof}
Using \eqref{2.3b}, we obtain
\begin{align}\label{2.4A}
 (\nabla_X\,\eta^i)Y = X g(\xi_i, Y) -g(\xi_i, \nabla_X\,Y)
 = g(\nabla_{X}\,\xi_i, Y)
 = \beta\{g(X,Y) -\sum\nolimits_{\,j}\eta^j(X)\,\eta^j(Y)\}
\end{align}
for all $X,Y\in\mathfrak{X}_M$.
By \eqref{2.4A}, $(\nabla_X\,\eta^i)Y=(\nabla_Y\,\eta^i)X$ is true.
Thus, for $X,Y\in {\cal D}$ we obtain
\[
 0 = (\nabla_X\,\eta^i)Y-(\nabla_Y\,\eta^i)X
 = -\beta\,g([X,Y], \xi_i)
\]
that means that the distribution ${\cal D}$ is involutive, or equivalently, $d\eta^i(X,Y)=0$ for all $i=1,\ldots,s$ and $X,Y\in {\cal D}$.
By this and ${\cal N}^{\,(4)}_{ij}=0$, see \eqref{Eq-normal},
we find
$ d\eta^i = 0$.
Using \eqref{2.3-f-beta} and \eqref{E-3.3},
we get
\[
 3\,d\Phi(X,Y,Z) = 2\,\beta\{
  \bar\eta(X) g({f}Z, Y)
 +\bar\eta(Y) g({f}X, Z)
 +\bar\eta(Z) g({f}Y, X)
 \}.
\]
We also have the following:
\[
 3(\bar\eta\wedge\Phi)(X,Y,Z) =
  \bar\eta(X) g({f}Z, Y)
 +\bar\eta(Y) g({f}X, Z)
 +\bar\eta(Z) g({f}Y, X).
\]
Thus,
$d\Phi = 2\,\beta\,\bar\eta\wedge\Phi$ is valid.
By \eqref{4.NN} with $S=f$, and \eqref{2.3-f-beta}, we get $[{f},{f}]=0$; thus ${\cal N}^{\,(1)} = 0$.
Finally, from \eqref{3.1-new}, using \eqref{2.1} and \eqref{2.2}, we~obtain
\begin{align*}
 & g((\nabla_{X}{f})Y,Z) - \frac12\,{\cal N}^{\,(5)}(X,Y,Z) = 3\,\beta\big\{ (\bar\eta\wedge\Phi)(X,fY,fZ) - (\bar\eta\wedge\Phi)(X,Y,Z) \big\} \\
 & = \beta\big\{ -\bar\eta(X) g(QZ, fY) + \bar\eta(X) g(Z, fY) - \bar\eta(Y) g(fX, Z) - \bar\eta(Z) g(X, fY)\big\} \\
 & = \beta\big\{ \bar\eta(Z) g(fX, Y)  - \bar\eta(Y) g(fX, Z) - \bar\eta(X) g(fY, \widetilde Q Z) \big\}.
\end{align*}
From this, using \eqref{2.3-f-beta}, we get ${\cal N}^{\,(5)}(X,Y,Z)=2\,\beta\,\bar\eta(X) g(f Y, \widetilde Q Z)$.

Conversely, using  \eqref{2.1} and \eqref{Eq-almost-K} in \eqref{3.1-new}, we obtain
\begin{align*}
 & 2\,g((\nabla_{X}{f})Y,Z) = 6\,\beta\,(\bar\eta\wedge\Phi)(X,{f} Y,{f} Z) - 6\,\beta\,(\bar\eta\wedge\Phi)(X,Y,Z)
  +2\,\beta\,\bar\eta(X) g(\widetilde Q f Y, Z) \\
 & = 2\beta\big\{ {-}\bar\eta(X) g(fY, QZ) {-} \bar\eta(X) g({f}Z, Y) {-}\bar\eta(Y) g({f}X, Z) {-}\bar\eta(Z) g({f}Y, X)
  {+}\bar\eta(X) g(f Y, \widetilde QZ) \big\} \\
 & = 2\,\beta\{g({f} X, Y)\,g(\bar\xi, Z) -\bar\eta(Y) g({f}X, Z)\}  ,
\end{align*}
thus \eqref{2.3-f-beta} is true.
\end{proof}

\begin{Theorem}
A weak $\beta$-Kenmotsu $f$-manifold is locally a twisted product $\mathbb{R}^s\times_\sigma\bar M$
$($a warped product when $X(\beta)=0$ for $X\in{\cal D})$, where $\bar M(\bar g, J)$ is a weak K\"{a}hler manifold.
\end{Theorem}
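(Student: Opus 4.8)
The goal is to split $M$ locally as a twisted (or warped) product $\mathbb{R}^s \times_\sigma \bar M$ with $\bar M$ weak K\"ahler. The starting data are: $\mathcal{D}^\bot$ is integrable with $[\xi_i,\xi_j]=0$ and $\nabla_{\xi_i}\xi_j=0$ (see \eqref{Eq-normal-3}), so the leaves of $\mathcal{D}^\bot$ are totally geodesic flat $s$-dimensional submanifolds; and $\mathcal{D}$ is integrable, since $d\eta^i=0$ in \eqref{Eq-almost-K} is equivalent (via \eqref{2.4A}) to $g([X,Y],\xi_i)=0$ for $X,Y\in\mathcal{D}$. The plan is: (1) use these two integrable orthogonal distributions together with the second fundamental form of $\mathcal{D}$, read off from \eqref{2.3b}, to invoke the Hiepko-type twisted product criterion; (2) identify the warping/twisting function and the base metric; (3) show the induced structure $(J,\bar g)$ on the leaf $\bar M$ tangent to $\mathcal{D}$ is weak K\"ahler, i.e.\ $\bar\nabla J=0$, using \eqref{2.3-f-beta}.

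\textbf{Step 1: the twisted product decomposition.} From \eqref{2.3b}, for $X,Y\in\mathcal{D}$ we have $g(\nabla_X Y,\xi_i)=-g(Y,\nabla_X\xi_i)=-\beta\,g(X,Y)$, so the leaves of $\mathcal{D}$ are umbilical in $M$ with mean curvature vector $H=-\beta\,\bar\xi$ (more precisely $-\beta\sum_i\xi_i$), which is $\mathcal{D}^\bot$-valued as required, while the leaves of $\mathcal{D}^\bot$ are totally geodesic. By the de Rham--Hiepko decomposition theorem for twisted products (e.g.\ the criterion cited around \eqref{E-conulC}--\eqref{E-CC**} and in \cite{Rov-Wa-2021}: one factor totally geodesic, the complementary factor spherical/umbilical), $M$ is locally isometric to a twisted product $B\times_\sigma F$ with $B$ a leaf of $\mathcal{D}^\bot$ (hence flat $\mathbb{R}^s$ by totally geodesic flatness) and $F$ a leaf of $\mathcal{D}$. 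The twisting function $\sigma$ is recovered from $H=-\nabla^{\mathcal{D}^\bot}(\log\sigma)$ along $\mathcal{D}^\bot$; when additionally $X(\beta)=0$ for $X\in\mathcal{D}$, one checks $\sigma$ depends only on the $\mathbb{R}^s$-coordinates and the twisted product degenerates to a warped product $\mathbb{R}^s\times_\sigma\bar M$. Concretely, writing the $\mathbb{R}^s$-coordinates $t_1,\dots,t_s$ with $\xi_i=\partial_{t_i}$ (legitimate since the $\xi_i$ commute and are parallel along $\mathcal{D}^\bot$), \eqref{2.3b} forces $\sigma$ to satisfy $\xi_i(\log\sigma)=\beta$, so $\sigma=\exp(\sum_i\int\beta\,dt_i)$ up to a function pulled back from $\bar M$.

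\textbf{Step 2: the weak K\"ahler structure on $\bar M$.} On a leaf $\bar M$ of $\mathcal{D}$ set $J:=f|_{\mathcal{D}}$; it is skew-symmetric w.r.t.\ $\bar g:=g|_{\mathcal{D}}$ and $J^2=-Q|_{\mathcal{D}}$ is negative-definite by \eqref{2.1}--\eqref{2.2} and positive-definiteness of $Q$, so $(\bar M,\bar g,J)$ is a weak Hermitian manifold in the sense of the definition preceding \eqref{3.9A}. It remains to prove $\bar\nabla J=0$, where $\bar\nabla$ is the Levi-Civita connection of $\bar g$. For $X,Y\in\mathcal{D}$ the Gauss formula gives $\bar\nabla_X Y=(\nabla_X Y)^\top$, and since $J$ maps $\mathcal{D}$ to $\mathcal{D}$, $(\bar\nabla_X J)Y=\bigl((\nabla_X f)Y\bigr)^\top + (\text{second fundamental form terms})$; the umbilicity terms involve $g(fX,Y)\bar\xi$ and $\bar\eta(Y)fX$, which are exactly the right-hand side of \eqref{2.3-f-beta} restricted to $X,Y\in\mathcal{D}$ (note $\bar\eta(Y)=0$ there, and $\bar\xi\in\mathcal{D}^\bot$). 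Substituting \eqref{2.3-f-beta} with $X,Y\in\mathcal{D}$ gives $(\nabla_X f)Y=\beta\,g(fX,Y)\bar\xi$, whose $\mathcal{D}$-component vanishes, and a short computation shows the normal-component/second-fundamental-form contributions cancel against the $\beta\,g(fX,Y)\bar\xi$ term. Hence $\bar\nabla J=0$ and $\bar M$ is weak K\"ahler.

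\textbf{Main obstacle.} The delicate point is Step 2's bookkeeping: separating the ambient $\nabla f$ into the intrinsic $\bar\nabla J$ of the leaf plus shape-operator correction terms, and verifying that the correction terms coincide precisely with the $\beta$-Kenmotsu right-hand side so that everything cancels. One must be careful that $\widetilde Q=Q-\mathrm{id}$ does not obstruct this — but $\widetilde Q$ preserves $\mathcal{D}$ and commutes with $f$, and the $\mathcal{N}^{(5)}$-term in \eqref{Eq-almost-K} is proportional to $\bar\eta(X)$, hence vanishes for $X\in\mathcal{D}$, so the weak correction is invisible along the leaf. A secondary technical point is justifying that $\sigma$ (the twisting function) is globally defined on a neighborhood and reduces to a warping function exactly under $X(\beta)=0$, $X\in\mathcal{D}$; this follows from integrating the gradient condition $H=-\nabla(\log\sigma)$ along the (flat, hence coordinatizable) leaves of $\mathcal{D}^\bot$.
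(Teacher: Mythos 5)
Your proposal is correct and follows essentially the same route as the paper: totally geodesic flat leaves of ${\cal D}^\bot$ from \eqref{Eq-normal-3}, integrability of ${\cal D}$, total umbilicity of ${\cal D}$ with $H=-\beta\,\bar\xi$ read off from \eqref{2.3b}, the twisted-product decomposition theorem (the paper cites Ponge--Reckziegel \cite{pr-1993}, Theorem~1 and Proposition~3 for the warped case $X(\beta)=0$), and $\bar\nabla J=0$ obtained by projecting \eqref{2.3-f-beta} onto ${\cal D}$, where the right-hand side lies in ${\cal D}^\bot$. The only cosmetic differences are the attribution of the splitting criterion and your phrasing of the Gauss-formula bookkeeping, which in fact simplifies since $f$ annihilates ${\cal D}^\bot$.
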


\begin{proof}
By \eqref{Eq-normal-3}, the distribution ${\cal D}^\bot$ is tangent to a totally geodesic foliation,
and by the second equality of \eqref{Eq-normal-2}, the distribution ${\cal D}$ is tangent to a foliation.
By \eqref{2.3b}, the splitting tensor \eqref{E-conulC}
is conformal: $C_{\xi_i}X
= -\beta X\ (X\in{\cal D})$. Hence, ${\cal D}$ is tangent to a totally umbilical foliation with the mean curvature vector $H=-\beta\,\bar\xi$.
By \cite[Theorem~1]{pr-1993}, our manifold is locally a twisted product.
If $X(\beta)=0\ (X\in{\cal D})$ is true, then we get locally a warped product, see \cite[Proposition~3]{pr-1993}.
By \eqref{2.2}, the (1,1)-tensor $J=f|_{\,\cal D}$ is skew-symmetric and $J^2$ is negative definite.
To~show $\bar\nabla J=0$, using \eqref{2.3-f-beta} we find
$(\bar\nabla_X J)Y=\pi_{2*}((\nabla_X{f})Y)=0$ for $X,Y\in{\cal D}$.
\end{proof}

\begin{Example}\rm
Let $\bar M(\bar g,J)$ be a weak K\"{a}hler manifold and
$\sigma=c\,e^{\,\beta\sum t_i}$ a function on Euclidean space $\mathbb{R}^s(t_1,\ldots,t_s)$, where $\beta,c$ are nonzero constants.
Then the warped product manifold $M=\mathbb{R}^s\times_\sigma\bar M$ has a weak metric $f$-structure which satisfies \eqref{2.3-f-beta}.
Using \eqref{4.NN} with $S=J$, for a weak K\"{a}hler manifold, we get $[{J},{J}]=0$; hence, ${\cal N}^{\,(1)}=0$ is true.
\end{Example}

\begin{Corollary}
A weak Kenmotsu $f$-manifold $M^{2n+s}(f,Q,\xi_i,\eta^i,g)$ is locally a warped product $\mathbb{R}^s\times_\sigma \bar M$,
where $\sigma=c\,e^{\,\sum t_i}\ (c=const\ne0)$ and $\bar M(\bar g,J)$ is a weak K\"{a}hler manifold.
\end{Corollary}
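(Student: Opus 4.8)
The plan is to derive the corollary from the preceding theorem together with an explicit computation of the warping function. A weak Kenmotsu $f$-manifold is by definition the case $\beta\equiv 1$ of a weak $\beta$-Kenmotsu $f$-manifold; since $\beta$ is then constant we have $X(\beta)=0$ for every $X\in{\cal D}$ (indeed for every $X\in\mathfrak{X}_M$), so the ``warped product'' clause of the previous theorem applies and $M$ is locally a warped product $\mathbb{R}^s\times_\sigma\bar M$ with $\bar M(\bar g,J)$ a weak K\"{a}hler manifold. What remains is to show that the warping function may be taken to be $\sigma=c\,e^{\sum t_i}$ with $c=\mathrm{const}\ne0$.

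Next I would set up the local product coordinates. By \eqref{Eq-normal-3} the leaves of the totally geodesic foliation tangent to ${\cal D}^\bot$ are flat, and since $[\xi_i,\xi_j]=0$ (an abelian $\mathfrak{g}$-foliation) the commuting fields $\xi_1,\ldots,\xi_s$ can be completed to a coordinate frame $\xi_i=\partial/\partial t_i$ on the $\mathbb{R}^s$-factor, with $\eta^i=dt_i$; in these coordinates the warped metric reads $g=\sum_i dt_i^2+\sigma^2\bar g$, where $\bar g$ is the pullback of the weak K\"{a}hler metric on $\bar M$ and is independent of $(t_1,\ldots,t_s)$, and ${\cal D}$ is tangent to the fibers $\{t\}\times\bar M$.

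The key step is to evaluate $\partial_{t_i}\log\sigma$. For $X,Y\in{\cal D}$ I would use the always-valid identity $(\pounds_{\xi_i}g)(X,Y)=g(\nabla_X\xi_i,Y)+g(\nabla_Y\xi_i,X)$ together with \eqref{2.3b} (with $\beta=1$), which on ${\cal D}$ gives $\nabla_X\xi_i=X$, hence $(\pounds_{\xi_i}g)(X,Y)=2\,g(X,Y)$. On the other hand, since $\bar g$ does not depend on $t$, one has $(\pounds_{\xi_i}g)(X,Y)=\partial_{t_i}(\sigma^2)\,\bar g(X,Y)=2(\partial_{t_i}\log\sigma)\,g(X,Y)$ for $X,Y\in{\cal D}$. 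Comparing the two expressions yields $\partial_{t_i}\log\sigma=1$ for all $i$, so $\log\sigma=\sum_i t_i+\mathrm{const}$, i.e.\ $\sigma=c\,e^{\sum_i t_i}$. Equivalently, one may compare the umbilicity vector $H=-\bar\xi$ of ${\cal D}$, read off from \eqref{2.3b}, with the mean curvature vector $-\operatorname{grad}_{\mathbb{R}^s}\log\sigma$ of the fibers of a warped product, obtaining the same conclusion.

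The only real obstacle I anticipate is bookkeeping rather than substance: one must check that the identification $\xi_i=\partial_{t_i}$ and the $t$-independence of $\bar g$ are exactly what the cited twisted-product construction (via \cite{pr-1993}) delivers, so that the two computations of $\pounds_{\xi_i}g$ refer to the same objects; once this is granted, the determination of $\sigma$ is immediate, and the Example with $\sigma=c\,e^{\beta\sum t_i}$ (taking $\beta=1$) serves as a consistency check for the converse direction.
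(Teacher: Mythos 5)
Your argument is correct and follows the same route as the paper: the corollary is obtained from the preceding theorem by noting that $\beta\equiv1$ is constant (so the warped-product clause applies), and the warping function is then pinned down by comparing the umbilicity of ${\cal D}$ coming from \eqref{2.3b} (mean curvature $H=-\bar\xi$, equivalently $\partial_{t_i}\log\sigma=1$) with the warped-product structure, giving $\sigma=c\,e^{\sum t_i}$. The paper leaves this last computation implicit (it is suggested by the Example with $\sigma=c\,e^{\beta\sum t_i}$); your explicit evaluation of $\pounds_{\xi_i}g$ on ${\cal D}$ is a correct way to carry it out.
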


To simplify the calculations in the rest of the paper, we assume that $\beta=const$.

\begin{Proposition}
\label{Lem-32}
The following formulas hold for weak $\beta$-Kenmotsu $f$-manifolds with $\beta=const$:
\begin{align*}
 & R_{X, Y}\,\xi_i = \beta^2\big\{\bar\eta(X)Y-\bar\eta(Y)X +\sum\nolimits_{\,j}\big(\bar\eta(Y)\eta^j(X) - \bar\eta(X)\eta^j(Y)\big)\xi_j\big\}
 \quad (X,Y\in\mathfrak{X}_M),\\
 & \Ric^\sharp \xi_i = -2\,n\,\beta^2\bar\xi , \\
 & (\nabla_{\xi_i}\Ric^\sharp)X = - 2\,\beta\Ric^\sharp X
 - 4\,n\,\beta^3\big\{ s\big(X - \sum\nolimits_{\,j}\eta^j(X)\,\xi_j\big) + \bar\eta(X)\bar\xi\big\}
 \quad (X\in\mathfrak{X}_M),\\
 & \xi_i(r) = -2\,\beta\{r + 2\,s\,n(2\,n+1)\,\beta^2\}.
\end{align*}
\end{Proposition}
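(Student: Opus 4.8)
The plan is to derive the four formulas in sequence, each feeding into the next, using the fundamental identity \eqref{2.3b} for $\nabla_X\xi_i$ together with $\beta=const$. First I would compute the curvature tensor $R_{X,Y}\xi_i$ directly from its definition $R_{X,Y}\xi_i = \nabla_X\nabla_Y\xi_i - \nabla_Y\nabla_X\xi_i - \nabla_{[X,Y]}\xi_i$. Substituting \eqref{2.3b} for each $\nabla_\bullet\xi_i$ and expanding, the $\beta X(\beta)$-type terms vanish since $\beta$ is constant; what remains are terms involving $\nabla_X Y$, $\nabla_Y X$, and $\sum_j\eta^j(\cdot)\xi_j$. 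Using $(\nabla_X\eta^j)Y = \beta\{g(X,Y)-\sum_k\eta^k(X)\eta^k(Y)\}$ from \eqref{2.4A} and again \eqref{2.3b} to handle $\nabla_\bullet\xi_j$, the antisymmetrization in $X,Y$ collapses the expression to the stated formula. This first step is mostly bookkeeping but is the source of everything else.

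Next, for $\Ric^\sharp\xi_i$: contract the curvature formula. Taking a local orthonormal frame $\{e_p\}$ and computing $\Ric(\xi_i,Y) = \sum_p g(R_{e_p,\xi_i}Y, e_p) = \sum_p g(R_{e_p,Y}\xi_i, e_p)$ (using the symmetry $g(R_{X,Y}Z,W)=g(R_{Z,W}X,Y)$), I substitute the formula for $R_{e_p,Y}\xi_i$ just obtained. The sums $\sum_p g(e_p,e_p)$, $\sum_p \eta^j(e_p)g(Y,e_p)$, etc., evaluate in the obvious way, and one must keep track that $\bar\eta$ restricted to $\cal D$ vanishes while on $\cal D^\bot$ it counts with multiplicity. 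The trace over the full $(2n+s)$-dimensional frame contributes only from the $\cal D$-directions for the relevant terms, yielding the coefficient $2n$ and hence $\Ric^\sharp\xi_i = -2n\beta^2\bar\xi$.

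For the third formula, $(\nabla_{\xi_i}\Ric^\sharp)X$, I would differentiate the relation $\Ric^\sharp\xi_j = -2n\beta^2\bar\xi$ (valid for every $j$) along $\xi_i$. Writing $\nabla_{\xi_i}(\Ric^\sharp\xi_j) = (\nabla_{\xi_i}\Ric^\sharp)\xi_j + \Ric^\sharp(\nabla_{\xi_i}\xi_j)$ and using $\nabla_{\xi_i}\xi_j=0$ from \eqref{Eq-normal-3}, I get $(\nabla_{\xi_i}\Ric^\sharp)\xi_j = -2n\beta^2\nabla_{\xi_i}\bar\xi = 0$. To get the action on a general $X$, one needs a second differentiated identity: differentiate $\Ric(Y,\xi_j)=-2n\beta^2\bar\eta(Y)$ along a general direction, or better, use the second Bianchi identity / the standard evolution relation combined with \eqref{2.3b} applied to $\nabla_X\xi_i = \beta(X-\sum_j\eta^j(X)\xi_j)$. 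Concretely, differentiate $\Ric^\sharp\xi_i + 2n\beta^2\bar\xi = 0$ along an arbitrary $X$: $(\nabla_X\Ric^\sharp)\xi_i + \Ric^\sharp(\nabla_X\xi_i) + 2n\beta^2\nabla_X\bar\xi = 0$, substitute \eqref{2.3b}, then use the symmetry of $\nabla\Ric^\sharp$ encoded in the contracted Bianchi identity to swap $X$ and $\xi_i$. This is the step I expect to be the main obstacle: extracting $(\nabla_{\xi_i}\Ric^\sharp)X$ for $X\in\cal D$ requires either invoking the contracted second Bianchi identity carefully or redoing a direct frame computation of $\sum_p(\nabla_{\xi_i}R)_{e_p,X}e_p$ using the already-established $R_{X,Y}\xi_i$ formula and $\nabla R$ expressed via \eqref{2.3b}; one must watch the three distinct types of terms (the $\Ric^\sharp X$ term, the $\cal D$-projection term with coefficient $sn$, and the $\bar\xi$ term). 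Finally, for $\xi_i(r)$: take the trace of the third formula. Since $\tr(\nabla_{\xi_i}\Ric^\sharp) = \xi_i(\tr\Ric^\sharp) = \xi_i(r)$, and the trace of $-2\beta\Ric^\sharp$ is $-2\beta r$, while the trace of the bracketed term is $-4n\beta^3\{s\cdot 2n + s\} = -4n\beta^3 s(2n+1)$, I obtain $\xi_i(r) = -2\beta r - 4sn(2n+1)\beta^3 = -2\beta\{r + 2sn(2n+1)\beta^2\}$, as claimed.
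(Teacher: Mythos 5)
Your treatment of the first, second and fourth formulas is correct and is the natural route: with $\beta$ constant, \eqref{2.3b} gives $\nabla_X\xi_i=\beta PX$, where $PX=X-\sum_j\eta^j(X)\,\xi_j$, so the definition of $R$ together with \eqref{2.4A} collapses to the stated expression for $R_{X,Y}\,\xi_i$; contracting it gives $\Ric^\sharp\xi_i=-2n\beta^2\bar\xi$; and tracing the third identity (the bracket has trace $2ns+s=s(2n+1)$) gives $\xi_i(r)$ exactly as you computed. Note that the survey states this proposition without proof (it is quoted from \cite{rst-57}), so the evaluation is of your argument on its own terms.

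The genuine gap is in the third formula, precisely where you anticipate trouble, and neither of the two concrete routes you offer works as stated. Differentiating $\Ric^\sharp\xi_i=-2n\beta^2\bar\xi$ along an arbitrary $X$ only yields $(\nabla_X\Ric^\sharp)\xi_i=-\beta\,\Ric^\sharp PX-2ns\beta^3 PX$, and there is no symmetry allowing you to ``swap $X$ and $\xi_i$'': the contracted Bianchi identity only gives $\Div\Ric=\frac12\,dr$ and in general $(\nabla_X\Ric)(\xi_i,\cdot)\neq(\nabla_{\xi_i}\Ric)(X,\cdot)$; indeed here, for $X\in{\cal D}$, the target $(\nabla_{\xi_i}\Ric^\sharp)X=-2\beta\,\Ric^\sharp X-4ns\beta^3X$ is exactly \emph{twice} the swapped quantity, so the swap would produce a wrong coefficient. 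Your fallback, a ``direct frame computation of $\sum_p(\nabla_{\xi_i}R)_{e_p,X}\,e_p$,'' is also not available from the data you have: the formula for $R_{\cdot,\cdot}\,\xi_i$ and its covariant derivatives only control components of $\nabla R$ having some $\xi_j$ among the four algebraic slots, whereas this contraction needs $\nabla_{\xi_i}R$ on generic arguments. The missing step is the \emph{uncontracted} second Bianchi identity, which moves $\xi_i$ from the differentiation slot into an argument slot: $(\nabla_{\xi_i}R)_{e_p,X}Y=-(\nabla_{e_p}R)_{X,\xi_i}Y-(\nabla_{X}R)_{\xi_i,e_p}Y$. Both right-hand terms are computable by covariantly differentiating the first formula of the proposition (using \eqref{2.3b} and the pair symmetry of $\nabla_Z R$), and contracting with $e_p$ then yields $(\nabla_{\xi_i}\Ric)(X,Y)$ with the stated coefficients. (Alternatively, one may compute $\pounds_{\xi_i}\Ric$ from $\pounds_{\xi_i}g=2\beta\,(g-\sum_j\eta^j\otimes\eta^j)$ and convert via $(\nabla_{\xi_i}\Ric)(X,Y)=(\pounds_{\xi_i}\Ric)(X,Y)-\beta\,\Ric(PX,Y)-\beta\,\Ric(X,PY)$, but this requires its own computation of $\pounds_{\xi_i}\Ric$.) With this repair the rest of your plan goes through.
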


The following theorem generalizes \cite[Theorem~1]{ghosh2019ricci} with $\beta\equiv1$ and $Q={\rm id}$.

\begin{Theorem}
Let $M^{2n+s}({f},Q,\xi_i,\eta^i,g)$ be a weak $\beta$-Kenmotsu $f$-manifold satisfying $\beta=const$.
If~$\nabla_{\xi_i}\Ric^\sharp=0$, then $(M,g)$ is an $\eta$-Einstein manifold \eqref{Eq-2.10} of constant scalar curvature
$r=-2\,s\,n(2\,n+1)\,\beta^2$.
\end{Theorem}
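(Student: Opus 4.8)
The plan is to start from the hypothesis $\nabla_{\xi_i}\Ric^\sharp=0$ and plug it into the third formula of Proposition~\ref{Lem-32}. Since the left-hand side vanishes, that identity becomes
\[
 0 = -2\,\beta\,\Ric^\sharp X - 4\,n\,\beta^3\big\{ s\big(X - \textstyle\sum_{\,j}\eta^j(X)\,\xi_j\big) + \bar\eta(X)\,\bar\xi\big\},
\]
and dividing by $-2\,\beta$ (using $\beta=const\ne0$) gives an explicit formula for $\Ric^\sharp X$ as a linear combination of $X$, $\sum_j\eta^j(X)\,\xi_j$ and $\bar\eta(X)\,\bar\xi$. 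Writing this out as a $(0,2)$-tensor and observing that $\sum_j\eta^j(X)\eta^j(Y)$ and $\bar\eta(X)\bar\eta(Y)$ together regroup into $\sum_i\eta^i\otimes\eta^i$ and $\sum_{i\ne j}\eta^i\otimes\eta^j$, I would read off $a = -2\,n\,s\,\beta^2$ and $b = 2\,n\,s\,\beta^2$, matching the $\eta$-Einstein form \eqref{Eq-2.10} (consistency check: $a+b=0=\tr Q$ would be wrong, so I should instead verify $\Ric^\sharp\bar\xi$ against the second formula of Proposition~\ref{Lem-32}, which pins down the coefficients correctly — the bookkeeping of which terms land on the diagonal part $\sum_i\eta^i\otimes\eta^i$ versus the off-diagonal part is the one place to be careful).

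Once the $\eta$-Einstein identity is established, the scalar curvature follows by taking the trace: $r=(2n+s)\,a+s\,b$ with the $a,b$ just found, which should collapse to $r=-2\,s\,n(2n+1)\,\beta^2$. Alternatively — and this is cleaner — I would use the fourth formula of Proposition~\ref{Lem-32}, namely $\xi_i(r) = -2\,\beta\{r+2\,s\,n(2n+1)\,\beta^2\}$: the hypothesis $\nabla_{\xi_i}\Ric^\sharp=0$ forces $\xi_i(r)=0$ (since $r=\tr_g\Ric^\sharp$ and the metric is parallel), hence $r+2\,s\,n(2n+1)\,\beta^2=0$, giving the stated value directly and confirming $r=const$.

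The main obstacle I anticipate is not conceptual but organizational: making sure the $\eta$-Einstein decomposition \eqref{Eq-2.10} is matched with the right splitting between the $\sum_i\eta^i\otimes\eta^i$ term and the $\sum_{i\ne j}\eta^i\otimes\eta^j$ term, because the raw expression coming out of Proposition~\ref{Lem-32} mixes $X-\sum_j\eta^j(X)\xi_j$ (which contributes only to the diagonal $\eta^i\otimes\eta^i$ part through its $-\sum_j$ piece) with $\bar\eta(X)\bar\xi$ (which contributes to both diagonal and off-diagonal parts). A small but genuine subtlety is that the formulas in Proposition~\ref{Lem-32} are written with a fixed index $i$ on the left while the right-hand sides involve $\bar\xi$ and $\bar\eta$; I should note that these are in fact independent of $i$, so the conclusion is uniform. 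With that care taken, the theorem reduces to two short substitutions and a trace, so the write-up will be brief.
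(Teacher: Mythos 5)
Your proposal follows the paper's own proof essentially step for step: substitute $\nabla_{\xi_i}\Ric^\sharp=0$ into the third formula of Proposition~\ref{Lem-32}, divide by $-2\beta$ to get $\Ric^\sharp$ explicitly, regroup $\bar\eta\otimes\bar\eta$ into the diagonal part $\sum_i\eta^i\otimes\eta^i$ and the off-diagonal part $\sum_{i\ne j}\eta^i\otimes\eta^j$ to land in the form \eqref{Eq-2.10}, and take the trace to get $r=-2\,s\,n(2n+1)\beta^2$ (your alternative via $\xi_i(r)=-2\beta\{r+2\,s\,n(2n+1)\beta^2\}$ also works, since $\xi_i(r)=\tr_g(\nabla_{\xi_i}\Ric)=0$). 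The only issue is the one you flagged yourself: the correct coefficients are $a=-2\,s\,n\,\beta^2$ and $b=2\,(s-1)\,n\,\beta^2$ (so that $a+b=-2\,n\,\beta^2=\Ric(\xi_i,\xi_j)$, consistent with $\Ric^\sharp\xi_i=-2\,n\,\beta^2\,\bar\xi$), not $b=2\,n\,s\,\beta^2$, and your proposed consistency check against the second formula of Proposition~\ref{Lem-32} does pin this down.
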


\begin{proof}
By conditions and Proposition~\ref{Lem-32},
$\Ric^\sharp Y = 2\,n\,\beta^2\big\{ s(Y - \sum\nolimits_{\,j}\eta^j(Y)\,\xi_j) + \bar\eta(Y)\bar\xi\big\}$, thus $r=-2\,s\,n(2\,n+1)\,\beta^2$.
Since \eqref{Eq-2.10} with $a=2\,s\,n\,\beta^2$ and $b=2\,(1-s)\,n\,\beta^2$ holds, $(M,g)$ is $\eta$-Einstein.
\end{proof}

\section{$\eta$-Ricci solitons on weak $\beta$-Kenmotsu $f$-manifolds}
\label{sec:03-f-beta}

The following lemmas are used in the proof of Theorem~\ref{thm3.1A} given below.

\begin{Lemma}\label{lem3.3}
 Let $M^{2n+s}({f},Q,\xi_i,\eta^i,g)$ be a weak $\beta$-Kenmotsu $f$-manifold with $\beta=const$.
If $g$ represents an $\eta$-Ricci soliton \eqref{Eq-1.1}, then
$\lambda+\mu=-2\,n\,\beta^2$.
\end{Lemma}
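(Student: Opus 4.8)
The plan is to extract the scalar relation $\lambda+\mu=-2n\beta^2$ by plugging the structure vector fields into the $\eta$-Ricci soliton equation \eqref{Eq-1.1} and using the already-established curvature identities from Proposition~\ref{Lem-32}. First I would evaluate \eqref{Eq-1.1} on the pair $(\xi_i,\xi_i)$ for a fixed $i$. On the right-hand side, since $\eta^j(\xi_i)=\delta^j_i$, the sum $\mu\sum_j\eta^j\otimes\eta^j$ contributes $\mu$, the cross sum $\sum_{j\ne k}\eta^j\otimes\eta^k$ evaluated at $(\xi_i,\xi_i)$ vanishes, and $\lambda\,g(\xi_i,\xi_i)=\lambda$; hence the right-hand side equals $\lambda+\mu$.

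For the left-hand side I need two ingredients. The Ricci term is immediate: by Proposition~\ref{Lem-32}, $\Ric^\sharp\xi_i=-2n\beta^2\bar\xi$, so $\Ric(\xi_i,\xi_i)=g(-2n\beta^2\bar\xi,\xi_i)=-2n\beta^2$ (using $\bar\eta(\xi_i)=1$). The Lie-derivative term $\frac12(\pounds_V g)(\xi_i,\xi_i)=g(\nabla_{\xi_i}V,\xi_i)$ must be shown to vanish. This is the only genuine computation: using \eqref{2.3b}, which gives $\nabla_X\xi_i=\beta\{X-\sum_j\eta^j(X)\xi_j\}$ for any $X$, one has $g(\nabla_{\xi_i}V,\xi_i)=-g(V,\nabla_{\xi_i}\xi_i)+\xi_i\big(g(V,\xi_i)\big)$; but $\nabla_{\xi_i}\xi_i=0$ by \eqref{Eq-normal-3}, so I instead compute directly $g(\nabla_{\xi_i}V,\xi_i)=\frac12\,\xi_i(g(V,\xi_i))+\frac12\big(g(\nabla_{\xi_i}V,\xi_i)-g(\nabla_{\xi_i}\xi_i,V)\big)$, or more cleanly note that $(\pounds_V g)(\xi_i,\xi_i)=2g(\nabla_{\xi_i}V,\xi_i)$ and that $g(\nabla_X\xi_i,\xi_i)=\beta\{g(X,\xi_i)-\sum_j\eta^j(X)g(\xi_j,\xi_i)\}=0$ for every $X$ by \eqref{2.3b} together with orthonormality $g(\xi_j,\xi_i)=\delta_{ij}$. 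Applying this with $X=\nabla_{\xi_i}V$ is circular, so the right move is: $g(\nabla_{\xi_i}V,\xi_i)=-g(V,\nabla_{\xi_i}\xi_i)=0$ via \eqref{Eq-normal-3}, after rewriting $\pounds_V$ as $\pounds_V g(\xi_i,\xi_i)=V g(\xi_i,\xi_i)-2g(\nabla_{\xi_i}\xi_i,V)$ along a curve, using metric compatibility of $\nabla$.

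Combining, the $(\xi_i,\xi_i)$-component of \eqref{Eq-1.1} reads $0+(-2n\beta^2)=\lambda+\mu$, which is exactly the claim. The main (minor) obstacle is being careful that $\lambda,\mu$ are functions rather than constants, so one should note the identity holds pointwise and no integration is needed; no further regularity is required because every term evaluated at $(\xi_i,\xi_i)$ is a genuine scalar function equal to the stated expression. I expect this lemma to be short, essentially a one-line consequence of Proposition~\ref{Lem-32} once the Killing-type vanishing $(\pounds_V g)(\xi_i,\xi_i)=-2g(V,\nabla_{\xi_i}\xi_i)=0$ is recorded.
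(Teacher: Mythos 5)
Your overall route is the same as the paper's: evaluate the soliton equation \eqref{Eq-1.1} on the structure vector fields, note that the right-hand side there equals $\lambda+\mu$, use $\Ric(\xi_i,\xi_i)=-2n\beta^2$ from Proposition~\ref{Lem-32}, and reduce everything to showing that the Lie-derivative term vanishes. The Ricci and right-hand-side computations in your proposal are correct. The problem is precisely the step you yourself flag as ``the only genuine computation''. Metric compatibility gives
$g(\nabla_{\xi_i}V,\xi_i)=\xi_i\big(g(V,\xi_i)\big)-g(V,\nabla_{\xi_i}\xi_i)=\xi_i\big(\eta^i(V)\big)$,
using \eqref{Eq-normal-3}; your final ``right move'' $g(\nabla_{\xi_i}V,\xi_i)=-g(V,\nabla_{\xi_i}\xi_i)=0$ silently discards the derivative term $\xi_i\big(\eta^i(V)\big)$, which is not zero for an arbitrary potential $V$ (it is the $\xi_i$-derivative of the $\xi_i$-component of $V$). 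Likewise the identity you invoke, $\pounds_V\big(g(\xi_i,\xi_i)\big)=V\big(g(\xi_i,\xi_i)\big)-2g(\nabla_{\xi_i}\xi_i,V)$, is not correct: the Leibniz rule gives $0=V\big(g(\xi_i,\xi_i)\big)=(\pounds_V g)(\xi_i,\xi_i)+2g([V,\xi_i],\xi_i)$, and $[V,\xi_i]$ is not $\nabla_{\xi_i}\xi_i$. So as written your argument only shows $(\pounds_V g)(\xi_i,\xi_i)=2\,\xi_i\big(\eta^i(V)\big)$, and the vanishing of this quantity --- the whole content of the lemma beyond Proposition~\ref{Lem-32} --- is left unproved; the attempts you describe either drop this term or, as you note yourself, run in a circle.

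For comparison, the paper does not use your metric-compatibility shortcut at all: it Lie-differentiates the relation $g(\xi_i,\xi_j)=\delta_{ij}$ and rewrites the Lie-derivative term through the brackets, $(\pounds_V g)(\xi_i,\xi_j)=g(\xi_i,[V,\xi_j])$, whose vanishing it attributes to the structural relations \eqref{Eq-normal-2} (together with $\nabla_{\xi_i}\xi_j=0$ and $[\xi_i,\xi_j]=0$ for weak $\beta$-Kenmotsu $f$-manifolds). Note that $g(\xi_i,[V,\xi_j])=-\xi_j\big(\eta^i(V)\big)$, i.e.\ the bracket term is exactly the derivative term you dropped; so any correct completion of your plan must supply an actual argument (as the paper's source \cite{rst-57} does) that the components $\eta^i(V)$ are constant along the $\xi_j$-directions, rather than asserting it via a false identity.
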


\begin{proof} For a weak $\beta$-Kenmotsu $f$-manifold equipped with an $\eta$-Ricci soliton \eqref{Eq-1.1}, using \eqref{Eq-normal-2}, we get
\begin{align*}
 (\pounds_V\,g)(\xi_i,\xi_j)=g(\xi_i, [V,\xi_j])=0.
\end{align*}
Thus, using \eqref{Eq-1.1} in the Lie derivative of $g(\xi_i, \xi_j) = \delta_{ij}$, we obtain
 $\Ric(\xi_i,\xi_j) = \lambda +\mu$.
Finally, using the equality $\Ric(\xi_i,\xi_j)=-2\,n\,\beta^2$, see
Proposition~\ref{Lem-32}, we achieve the result.
\end{proof}

\begin{Lemma}
Let $M^{2n+s}({f},Q,\xi_i,\eta^i,g)$ be a weak $\beta$-Kenmotsu $f$-manifold with $\beta=const$.
If $g$ represents an $\eta$-Ricci soliton  \eqref{Eq-1.1}, then
 $(\pounds_V R)_{X,\xi_j}\xi_i = 0$ for all $i,j$.
\end{Lemma}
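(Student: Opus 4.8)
The plan is to compute $\pounds_V R$ through Yano's formulae for the Lie derivative of the Levi‑Civita connection and of the curvature, reducing everything to the structure directions $\xi_1,\dots,\xi_s$, where all the relevant quantities are governed by Proposition~\ref{Lem-32}. Write $h:=\pounds_V g$. By the $\eta$-Ricci soliton equation \eqref{Eq-1.1} together with Lemma~\ref{lem3.3} (so $\lambda+\mu=-2\,n\,\beta^2$), one rewrites $h=2\lambda\big(g-\sum_i\eta^i\otimes\eta^i\big)-4\,n\,\beta^2\,\bar\eta\otimes\bar\eta-2\,\Ric$; contracting with $\xi_i$ and using $\Ric^\sharp\xi_i=-2\,n\,\beta^2\,\bar\xi$ yields the key fact $h(\xi_i,\cdot)=0$ for every $i$. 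The remaining ingredients are: $\nabla_X\xi_i=\beta\,PX$, where $PX:=X-\sum_j\eta^j(X)\,\xi_j$ is the projection onto ${\cal D}$ (from \eqref{2.3b}); $\nabla_{\xi_i}\xi_j=0$, $\nabla_{\xi_i}\eta^k=0$, hence also $\nabla_{\xi_i}P=0$ (from \eqref{Eq-normal-3} and \eqref{2.4A}); and $(\nabla_{\xi_i}\Ric^\sharp)X=-2\beta\,\Ric^\sharp X-4\,n\,\beta^3\{s\,PX+\bar\eta(X)\,\bar\xi\}$ (Proposition~\ref{Lem-32}).

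Set $B:=\pounds_V\nabla$, the symmetric $(1,2)$-tensor with $2\,g(B(X,Y),Z)=(\nabla_X h)(Y,Z)+(\nabla_Y h)(X,Z)-(\nabla_Z h)(X,Y)$. From $h(\xi_i,\cdot)=0$ and $\nabla_X\xi_i=\beta\,PX$ one gets $(\nabla_X h)(\xi_i,Z)=(\nabla_Z h)(X,\xi_i)=-\beta\,h(X,Z)$, while $\nabla_{\xi_i}g=0$, $\nabla_{\xi_i}\eta^k=0$ and the formula for $\nabla_{\xi_i}\Ric^\sharp$ (after re-expressing $\Ric$ through $h$) give $(\nabla_{\xi_i}h)(X,Z)=2\,\psi_i\,g(PX,Z)-2\beta\,h(X,Z)$, with $\psi_i:=\xi_i\lambda+2\beta\lambda+4n\beta^3 s$. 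Consequently $B(X,\xi_i)=\psi_i\,PX-\beta\,h^\sharp X$ and $B(\xi_i,\xi_j)=0$, where $h^\sharp$ is the self-adjoint endomorphism metrically dual to $h$ (note $h^\sharp\xi_i=0=P\xi_i$). Feeding this into Yano's curvature formula $(\pounds_V R)(X,Y)Z=(\nabla_X B)(Y,Z)-(\nabla_Y B)(X,Z)$ with $Y=\xi_j$, $Z=\xi_i$, and expanding the two covariant derivatives (using $\nabla_X\xi_i=\beta\,PX$, $\nabla_{\xi_j}\xi_i=0$, $\nabla_{\xi_j}P=0$, $\nabla_{\xi_j}h^\sharp=2\psi_j P-2\beta\,h^\sharp$), all $h^\sharp$-terms and all terms carrying the bare factor $\lambda$ cancel pairwise, leaving only
\[
 (\pounds_V R)_{X,\xi_j}\xi_i=-\big(\xi_i(\xi_j\lambda)+\beta\,\xi_i\lambda+\beta\,\xi_j\lambda\big)\,PX
\]
(the term $\xi_i\xi_j\lambda$ being symmetric in $i,j$ since $[\xi_i,\xi_j]=0$).

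Thus the statement reduces to $\xi_i\lambda=0$ for all $i$, i.e. to $\lambda$ being constant along ${\cal D}^\bot$, and this is the step I expect to be the real obstacle. In the regime where $\lambda,\mu$ are taken to be constants (the case relevant to Theorem~\ref{thm3.1A}) this is immediate; more generally it is extracted, as in the classical Kenmotsu case (cf. the references around Lemma~\ref{lem3.3}), by differentiating the soliton equation and combining it with $\pounds_V\eta^i=d\big(g(V,\xi_i)\big)$ — valid because $d\eta^i=0$ on a weak $\beta$-Kenmotsu $f$-manifold — and with the fact that $[V,\xi_i]$ is a gradient field (a consequence of $h(\xi_i,\cdot)=0$), together with the curvature and scalar-curvature relations of Proposition~\ref{Lem-32}, which tie $\xi_i\lambda$ to the $\xi_i$-derivatives of $r$ and of $g(V,\xi_i)$. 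Granting $\xi_i\lambda=0$, the displayed residual vanishes and the lemma follows. Apart from this, the only delicacy is the pairwise cancellation in the second paragraph, which is kept under control by first determining $B(X,\xi_i)$ and $B(\xi_i,\xi_j)$.
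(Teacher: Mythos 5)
Your computational reduction is sound: the identity $h(\xi_i,\cdot)=0$ for $h=\pounds_V g$ (via Lemma~\ref{lem3.3} and $\Ric^\sharp\xi_i=-2n\beta^2\bar\xi$), the values $B(X,\xi_i)=\psi_i\,PX-\beta\,h^\sharp X$ and $B(\xi_i,\xi_j)=0$ for $B=\pounds_V\nabla$ (with $PX=X-\sum_j\eta^j(X)\,\xi_j$), and the final residual $(\pounds_V R)_{X,\xi_j}\xi_i=-\big(\xi_i(\xi_j\lambda)+\beta\,\xi_i\lambda+\beta\,\xi_j\lambda\big)PX$ all check out, and this route (the soliton equation fed into Yano's commutation formulas for $\pounds_V\nabla$ and $\pounds_V R$, with Proposition~\ref{Lem-32} supplying $\Ric^\sharp\xi_i$ and $\nabla_{\xi_i}\Ric^\sharp$) is the standard one in this line of work; note the survey states the lemma without proof, citing \cite{rst-57}, so there is no in-text argument to compare with, only the correctness of yours.

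The genuine gap is the step you yourself flag and then only sketch: you never establish $\xi_i\lambda=0$ (nor the weaker vanishing of $\xi_i(\xi_j\lambda)+\beta\,\xi_i\lambda+\beta\,\xi_j\lambda$). Under Definition \eqref{Eq-1.1} of this paper, $\lambda,\mu$ are arbitrary smooth functions, so this is precisely where the nontrivial content of the lemma sits once the routine computation is done. The ingredients you invoke do not obviously deliver it: Lemma~\ref{lem3.3} only gives $\xi_i\mu=-\xi_i\lambda$; tracing \eqref{Eq-1.1} gives $\Div V+r=2n\lambda-2sn\beta^2$, hence $2n\,\xi_i\lambda=\xi_i(\Div V)+\xi_i(r)$, and by Proposition~\ref{Lem-32} one has $\xi_i(r)=-2\beta\{r+2sn(2n+1)\beta^2\}$, which need not vanish, so one must actually control $\xi_i(\Div V)$ or argue along a different line; the facts $\pounds_V\eta^i=d(\eta^i(V))$ and that $[V,\xi_i]$ is a gradient are correct consequences of $d\eta^i=0$ and $h(\xi_i,\cdot)=0$, but by themselves they say nothing about $d\lambda$. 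If $\lambda$ (equivalently $\mu$) is assumed constant --- the usual $\eta$-Ricci soliton convention --- your argument is complete and correct; as written, for the function-valued $\lambda,\mu$ permitted by \eqref{Eq-1.1}, the proof is incomplete at its decisive point.
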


\begin{Lemma}
On an $\eta$-Einstein \eqref{Eq-2.10} weak $\beta$-Kenmotsu $f$-manifold
with $\beta=const$, we obtain
\begin{align}\label{3.16}
 {\rm Ric}^\sharp X = \Big( s\,\beta^2+\frac{r}{2\,n} \Big)X -\Big((2\,n+s)\,\beta^2 + \frac{r}{2\,n}\Big)\sum\nolimits_{\,i}\eta^j(X)\,\xi_j
 -2\,n\beta^2\sum\nolimits_{\,i\ne j} \eta^i(X)\,\xi_j.
\end{align}
\end{Lemma}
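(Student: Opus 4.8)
The plan is to pin down the two coefficient functions $a$ and $b$ appearing in the $\eta$-Einstein equation \eqref{Eq-2.10} from two scalar identities, and then substitute them back into the operator form of that equation.

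First I would contract \eqref{Eq-2.10} with a structure vector field $\xi_i$ in one argument. Using $\eta^j(\xi_i)=\delta^j_i$ together with $\bar\eta(\xi_i)=1$, the right-hand side collapses: $\Ric(\xi_i,Y)=a\,\eta^i(Y)+b\,\eta^i(Y)+(a+b)\sum_{j\ne i}\eta^j(Y)=(a+b)\,\bar\eta(Y)$, so $\Ric^\sharp\xi_i=(a+b)\,\bar\xi$. Since $\beta=const$, Proposition~\ref{Lem-32} applies and gives $\Ric^\sharp\xi_i=-2\,n\,\beta^2\,\bar\xi$; as $\bar\xi\ne0$, comparison yields the first relation $a+b=-2\,n\,\beta^2$. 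Next I would take the $g$-trace of \eqref{Eq-2.10}: since $\tr_g g=2n+s$, $\tr_g(\sum_i\eta^i\otimes\eta^i)=\sum_i g(\xi_i,\xi_i)=s$ and $\tr_g(\sum_{i\ne j}\eta^i\otimes\eta^j)=\sum_{i\ne j}g(\xi_i,\xi_j)=0$ by orthonormality of the $\xi_i$, we obtain the second relation $r=(2n+s)\,a+s\,b$ (this is also the trace identity already recorded in the Remark after \eqref{Eq-2.10}).

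Solving this $2\times2$ linear system in $a,b$ gives the unique solution $a=\frac{r}{2n}+s\,\beta^2$ and $b=-(2n+s)\,\beta^2-\frac{r}{2n}$, with $a+b=-2\,n\,\beta^2$. Finally I would rewrite \eqref{Eq-2.10} in operator form by raising an index with $g$ and using $\eta^i=g(\cdot,\xi_i)$, namely $\Ric^\sharp X=a\,X+b\sum_i\eta^i(X)\,\xi_i+(a+b)\sum_{i\ne j}\eta^i(X)\,\xi_j$, and substitute the computed values of $a$, $b$, $a+b$; this produces exactly \eqref{3.16}.

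I do not expect a genuine analytic obstacle here. The only places requiring care are the bookkeeping of the double-index sums $\sum_{i\ne j}$ when contracting \eqref{Eq-2.10} with $\xi_i$, and the consistent use of the orthonormality $g(\xi_i,\xi_j)=\delta_{ij}$ — once in the trace computation and once when passing from the $(0,2)$-form of the $\eta$-Einstein condition to the Ricci operator.
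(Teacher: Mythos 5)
Your proposal is correct and follows essentially the same route as the paper: the paper also takes the trace of \eqref{Eq-2.10} to get $r=(2n+s)\,a+s\,b$, evaluates \eqref{Eq-2.10} on the structure vector fields and uses Proposition~\ref{Lem-32} to get $a+b=-2\,n\,\beta^2$, then solves for $a,b$ and rewrites \eqref{Eq-2.10} in operator form to obtain \eqref{3.16}. Your contraction $\Ric^\sharp\xi_i=(a+b)\,\bar\xi$ is just a marginally more general version of the paper's step $X=Y=\xi_i$.
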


\begin{proof}
Tracing \eqref{Eq-2.10} gives $r=(2\,n+s)\,a+sb$.
Putting $X=Y=\xi_i$ in \eqref{Eq-2.10} and using Proposition~\ref{Lem-32},
yields $a+b=-2\,n\,\beta^2$. Thus,
 $a = s\,\beta^2+\frac{r}{2\,n}$
 and
 $b = -(2\,n+s)\,\beta^2 - \frac{r}{2\,n}$,
and \eqref{Eq-2.10} gives
\eqref{3.16}.
\end{proof}

Next, we consider an $\eta$-Einstein weak $\beta$-Kenmotsu $f$-manifold as an $\eta$-Ricci soliton.

\begin{Theorem}\label{thm3.1A}
Let $g$ represent an $\eta$-Ricci soliton \eqref{Eq-1.1} on a weak $\beta$-Kenmot\-su $f$-manifold
with $\dim M>3$ and $\beta=const$.
If the manifold is also $\eta$-Einstein \eqref{Eq-2.10}, then
$a=-2\,s\,n\beta^2$, $b=2(s-1)n\beta^2$, and
the scalar curvature is $r=-2\,s\,n(2\,n+1)\,\beta^2$.
\end{Theorem}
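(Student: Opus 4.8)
The plan is to combine the $\eta$-Ricci soliton structure equation \eqref{Eq-1.1} with the $\eta$-Einstein condition \eqref{Eq-2.10} and extract enough scalar equations to pin down $a$, $b$, and hence $r$. The starting point is Lemma~\ref{lem3.3}, which already gives $\lambda+\mu=-2\,n\,\beta^2$ under the $\eta$-Ricci soliton hypothesis. Since the manifold is also $\eta$-Einstein, substituting \eqref{Eq-2.10} into \eqref{Eq-1.1} converts the soliton equation into an explicit expression for $\tfrac12\pounds_V g$ in terms of $a,b,\lambda,\mu$ and the forms $\eta^i$; in particular $\tfrac12\pounds_V g = (\lambda-a)g + (\mu-b)\sum_i\eta^i\otimes\eta^i + (\lambda+\mu-a-b)\sum_{i\ne j}\eta^i\otimes\eta^j$. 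The vanishing of $\pounds_V g$ on $\mathcal D^\bot$, already used in Lemma~\ref{lem3.3}, forces $\lambda+\mu = a+b$, so with Lemma~\ref{lem3.3} one gets $a+b=-2\,n\,\beta^2$ — which is exactly the relation recorded in the proof of the $\eta$-Einstein formula \eqref{3.16}.

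Next I would use the previous lemma (the one asserting $(\pounds_V R)_{X,\xi_j}\xi_i=0$) together with Proposition~\ref{Lem-32} to obtain a second independent scalar relation. The idea is to take the Lie derivative of the curvature identity $R_{X,Y}\xi_i = \beta^2\{\bar\eta(X)Y - \bar\eta(Y)X + \cdots\}$ from Proposition~\ref{Lem-32}, or equivalently to differentiate the Ricci identity $\Ric^\sharp\xi_i = -2\,n\,\beta^2\bar\xi$ along $V$ and along the structure directions, exploiting $\xi_i(r) = -2\,\beta\{r + 2\,s\,n(2\,n+1)\,\beta^2\}$. Feeding the $\eta$-Einstein form \eqref{3.16} of $\Ric^\sharp$ into these differentiated identities should yield a constraint that, combined with $a+b=-2\,n\,\beta^2$, determines $a$ and $b$ separately: the expected outcome $a=-2\,s\,n\,\beta^2$, $b=2(s-1)n\,\beta^2$, whence $r=(2n+s)a+sb=-2\,s\,n(2\,n+1)\,\beta^2$ by the trace formula.

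The hypothesis $\dim M>3$ (i.e. $2n+s>3$) is there to guarantee $n\ge 1$ and to rule out degenerate low-dimensional coincidences where the $\eta$-Einstein decomposition is not genuinely three-term; I would note where it is used, most likely when separating the $\eta^i\otimes\eta^i$ and $\eta^i\otimes\eta^j$ ($i\ne j$) components. The main obstacle I anticipate is the second scalar relation: deriving it cleanly requires careful bookkeeping of the Lie derivative of curvature against the soliton flow, and one must check that the $V$-dependent terms drop out (using that $\pounds_V g$ restricted to $\mathcal D^\bot$ vanishes and that $V$ preserves the structure along $\mathcal D^\bot$) so that only the geometric constants $\beta$, $n$, $s$, $r$ survive. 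Once that relation is in hand, the rest is linear algebra in $a$ and $b$.
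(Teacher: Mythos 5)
There is a genuine gap: the step that actually proves the theorem is only announced, never carried out. Everything you derive explicitly — $(\pounds_V g)(\xi_i,\xi_j)=0$, hence $\lambda+\mu=a+b$, hence $a+b=-2n\beta^2$ via Lemma~\ref{lem3.3} (or, more directly, via $\Ric(\xi_i,\xi_i)=-2n\beta^2$ from Proposition~\ref{Lem-32}), together with the trace identity $r=(2n+s)a+sb$ — merely reproduces the content of the preceding lemma, i.e., the expressions $a=s\beta^2+\frac{r}{2n}$, $b=-(2n+s)\beta^2-\frac{r}{2n}$ in \eqref{3.16}. This leaves $a$ and $b$ as functions of the still-undetermined scalar curvature $r$, so no amount of ``linear algebra in $a$ and $b$'' can finish from here. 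The whole force of the theorem is the claim $r=-2sn(2n+1)\beta^2$ (constant), and for that you only say that Lie-differentiating the curvature identities of Proposition~\ref{Lem-32} and inserting \eqref{3.16} into the lemma $(\pounds_V R)_{X,\xi_j}\xi_i=0$ ``should yield a constraint.'' That computation — writing $\pounds_V g=2(\lambda-a)\bigl(g-\sum_i\eta^i\otimes\eta^i\bigr)$ (which your own relations give), passing to $\pounds_V\nabla$ and $\pounds_V R$, imposing $(\pounds_V R)_{X,\xi_j}\xi_i=0$, and checking that the $V$-dependent terms cancel so that a pointwise identity in $r$, $\beta$, $n$, $s$ survives — is precisely the nontrivial core of the proof, and it is absent. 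Note also that the identity $\xi_i(r)=-2\beta\{r+2sn(2n+1)\beta^2\}$ from Proposition~\ref{Lem-32} alone cannot force the value of $r$: the desired constant is only a fixed point of that first-order relation, not a consequence of it, so the soliton potential $V$ must genuinely enter beyond Lemma~\ref{lem3.3}.

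Two smaller points. First, your explanation of the hypothesis $\dim M>3$ (``rule out degenerate low-dimensional coincidences'') is speculation rather than an identified step; in arguments of this type it typically enters when one divides by a dimension-dependent factor to conclude constancy of the coefficients, and since $2n+s>3$ does not even force $n>1$, you would need to exhibit exactly which factor is nonzero. Second, if you hoped to replace the $\pounds_V R$ computation by the contracted Bianchi identity applied to \eqref{3.16} (using $\nabla\eta^i$ from the Kenmotsu condition), be aware that this route only gives $X(r)=0$ for $X\in{\cal D}$ (and only when the relevant dimensional factor is nonzero) and reproduces the $\xi_i(r)$ formula; it does not determine $r$ either, which again shows the missing second relation must come from the soliton structure itself.
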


\begin{Definition}
\rm
 A vector field $V$ on a weak metric $f$-manifold
 is called a {\it contact vector field},
 if
there exists a function $\rho\in C^\infty(M)$ such that
\begin{align*}
 \pounds_{X}\eta^i=\rho\,\eta^i,
\end{align*}
and if $\rho=0$,
i.e., the flow of $X$ preserves the forms $\eta^i$,
then $V$ is
a \textit{strict contact vector field}.
\end{Definition}

We consider the interaction of a weak $\beta$-Kenmotsu $f$-structure with an $\eta$-Ricci soliton whose potential vector field $V$ is a contact vector field, or $V$ is collinear to $\bar\xi$.

\begin{Theorem}
Let $M^{2n+s}({f},Q,\xi_i,\eta^i,g)$ be a weak $\beta$-Kenmotsu $f$-manifold with $\dim M>3$ and $\beta=const$.
If $g$ represents an $\eta$-Ricci soliton \eqref{Eq-1.1} with a contact potential vector field $V$, then $V$ is strict contact
and the manifold is $\eta$-Einstein \eqref{Eq-2.10} with $a=-2\,s\,n\beta^2,\ b=2(s-1)n\beta^2$ of
constant scalar curvature $r=-2\,s\,n(2\,n+1)\beta^2$.
\end{Theorem}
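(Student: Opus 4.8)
The plan is to first squeeze the soliton equation \eqref{Eq-1.1} in the structure directions $\xi_i$, then promote the contact condition to a strict one by a short Cartan--calculus computation, and finally differentiate \eqref{Eq-1.1} along $\xi_i$ to recognise the $\eta$-Einstein form; the explicit values of $a,b,r$ then drop out of Theorem~\ref{thm3.1A}. So first I would contract \eqref{Eq-1.1} with $\xi_j$. By Proposition~\ref{Lem-32} we have $\Ric^\sharp\xi_j=-2\,n\,\beta^2\bar\xi$, hence $\Ric(X,\xi_j)=-2\,n\,\beta^2\bar\eta(X)$, while the right-hand side of \eqref{Eq-1.1} evaluated at $(X,\xi_j)$ telescopes to $(\lambda+\mu)\,\bar\eta(X)$; combined with $\lambda+\mu=-2\,n\,\beta^2$ from Lemma~\ref{lem3.3}, this gives $(\pounds_V g)(X,\xi_j)=0$ for all $X$ and $j$. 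Since $\eta^j=g(\cdot,\xi_j)$, a Leibniz computation yields $(\pounds_V\eta^j)(X)=(\pounds_V g)(X,\xi_j)+g(X,[V,\xi_j])$, so by the contact hypothesis $\pounds_V\eta^j=\rho\,\eta^j$ we obtain $[V,\xi_j]=\rho\,\xi_j$.

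Next, to see that $V$ is strict, i.e. $\rho=0$, I would evaluate $\pounds_V\eta^i$ on $\xi_j$ in two ways: directly from the definition of the Lie derivative, $(\pounds_V\eta^i)(\xi_j)=V(\eta^i(\xi_j))-\eta^i([V,\xi_j])=-\rho\,\delta^i_j$; and from $\pounds_V\eta^i=\rho\,\eta^i$, $(\pounds_V\eta^i)(\xi_j)=\rho\,\delta^i_j$. Hence $2\rho\,\delta^i_j=0$, so $\rho=0$; thus $V$ is a strict contact vector field, $[V,\xi_i]=0$ and $\pounds_V\eta^i=0$ for all $i$.

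Now I would apply $\pounds_{\xi_i}$ to \eqref{Eq-1.1}. Using \eqref{2.3b} one has $\pounds_{\xi_i}g=2\,\beta\,(g-\sum_k\eta^k\otimes\eta^k)$ and $\pounds_{\xi_i}\eta^k=0$ (since $d\eta^k=0$); together with $[\xi_i,V]=0$ and $\pounds_V\eta^k=0$ this gives $\pounds_{\xi_i}\pounds_V g=\pounds_V\pounds_{\xi_i}g=2\,\beta\,\pounds_V g$. From the formula for $\nabla_{\xi_i}\Ric^\sharp$ in Proposition~\ref{Lem-32} (and $\Ric^\sharp\xi_k=-2\,n\,\beta^2\bar\xi$) a direct computation collapses the $\Ric$- and $\bar\eta\otimes\bar\eta$-contributions and produces $\pounds_{\xi_i}\Ric=-4\,s\,n\,\beta^3\,(g-\sum_k\eta^k\otimes\eta^k)$. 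Substituting these into the Lie-differentiated equation and then using \eqref{Eq-1.1} once more to eliminate $\pounds_V g$, I would solve for $\Ric$ and land on $\Ric=a\,g+b\sum_k\eta^k\otimes\eta^k+c\sum_{k\ne l}\eta^k\otimes\eta^l$ with smooth functions $a,b,c$; the point to check is that the coefficients obey $c=a+b$, so that this is exactly the $\eta$-Einstein form \eqref{Eq-2.10}. Finally, since $(M,g)$ now carries both an $\eta$-Ricci soliton and an $\eta$-Einstein structure, with $\dim M>3$ and $\beta=const$, Theorem~\ref{thm3.1A} forces $a=-2\,s\,n\,\beta^2$, $b=2(s-1)n\,\beta^2$, and $r=-2\,s\,n(2\,n+1)\,\beta^2$.

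The main obstacle is this last paragraph: the Lie derivative of \eqref{Eq-1.1} along $\xi_i$ must be carried out carefully (in computing $\pounds_{\xi_i}\Ric$ one must not forget that $\pounds_{\xi_i}g\neq0$, which adds a term beyond $g((\pounds_{\xi_i}\Ric^\sharp)\cdot,\cdot)$), and one has to verify the coefficient identity $c=a+b$ so that $(M,g)$ genuinely lies in the class \eqref{Eq-2.10} rather than a slightly larger one. Everything preceding it — the vanishing $(\pounds_V g)(\cdot,\xi_j)=0$, the promotion $\rho=0$, and the appeal to Theorem~\ref{thm3.1A} — is short and formal.
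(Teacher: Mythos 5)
Your proposal is correct, and the two points you left "to be checked" do go through. For the second one: Lie-differentiating \eqref{Eq-1.1} along $\xi_i$ with your ingredients ($[\xi_i,V]=0$, $\pounds_V\eta^k=0$, $\pounds_{\xi_i}\eta^k=0$, $\pounds_{\xi_i}g=2\beta(g-\sum_k\eta^k\otimes\eta^k)$, and $\pounds_{\xi_i}\Ric=-4sn\beta^3(g-\sum_k\eta^k\otimes\eta^k)$, which indeed follows from Proposition~\ref{Lem-32} and \eqref{2.3b} via $\pounds_{\xi_i}\Ric(X,Y)=(\nabla_{\xi_i}\Ric)(X,Y)+\Ric(\nabla_X\xi_i,Y)+\Ric(X,\nabla_Y\xi_i)$, so the extra terms you warned about are accounted for), and then eliminating $\pounds_V g$ by \eqref{Eq-1.1}, gives $2\beta\Ric=-(\xi_i(\lambda)+4sn\beta^3)\,g+\big(2\beta(\lambda+\mu)+4sn\beta^3-\xi_i(\mu)\big)\sum_k\eta^k\otimes\eta^k+\big(2\beta(\lambda+\mu)-\xi_i(\lambda+\mu)\big)\sum_{k\ne l}\eta^k\otimes\eta^l$. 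Since $\lambda+\mu=-2n\beta^2$ is constant by Lemma~\ref{lem3.3}, we get $\xi_i(\mu)=-\xi_i(\lambda)$, and the three coefficients satisfy exactly $c=a+b=-2n\beta^2$ (the $\xi_i(\lambda)$-terms cancel in $a+b$), so the metric is genuinely of the form \eqref{Eq-2.10} and Theorem~\ref{thm3.1A} supplies the stated constants; note $\lambda,\mu$ are only functions in \eqref{Eq-1.1}, which is why the $\xi_i(\lambda),\xi_i(\mu)$ terms must be kept, as your computation implicitly does. The strictness argument is also fine: $(\pounds_Vg)(\cdot,\xi_j)=0$ plus the contact hypothesis forces $[V,\xi_j]=\rho\,\xi_j$, and evaluating $\pounds_V\eta^i$ on $\xi_j$ then yields $\rho\,\delta^i_j=-\rho\,\delta^i_j$, so $\rho=0$. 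As for comparison: the survey prints no proof of this theorem (it is quoted from the cited preprint on $\eta$-Ricci solitons on weak $\beta$-Kenmotsu $f$-manifolds), and the auxiliary lemma $(\pounds_V R)_{X,\xi_j}\xi_i=0$ stated just before it indicates that the intended argument runs through Lie derivatives of the curvature tensor, as in the classical Kenmotsu/K-contact literature; your route — prove strictness first, then Lie-differentiate the soliton equation along $\xi_i$ — avoids curvature identities altogether and uses only \eqref{2.3b}, Proposition~\ref{Lem-32}, Lemma~\ref{lem3.3} and Theorem~\ref{thm3.1A}, which makes it shorter and self-contained within what the survey actually states.
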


\begin{Theorem}
Let $M^{2n+s}({f},Q,\xi_i,\eta^i,g)$ be a weak $\beta$-Kenmotsu $f$-manifold with $\dim M>3$ and $\beta=const$.
If~$g$ represents an $\eta$-Ricci soliton \eqref{Eq-1.1} with a
potential vector field $V$ collinear to $\bar\xi$: $V=\delta\,\bar\xi$ for a smooth function $\delta\ne0$ on $M$, then $\delta=const$
and the manifold is $\eta$-Einstein \eqref{Eq-2.10} with $a=-2\,s\,n\beta^2$ and $b=2(s-1)n\beta^2$ of constant scalar curvature $r=-2\,s\,n(2\,n+1)\beta^2$.
\end{Theorem}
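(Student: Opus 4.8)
The plan is to first read off from the $\eta$-Ricci soliton equation \eqref{Eq-1.1} that $\nabla\delta$ is, at every point, a multiple of $\bar\xi$; then to upgrade this to $\delta=\mathrm{const}$ by means of the curvature identity $(\pounds_{V}R)_{X,\xi_j}\xi_i=0$, valid for an $\eta$-Ricci soliton on a weak $\beta$-Kenmotsu $f$-manifold with $\beta=\mathrm{const}$; and finally to invoke Theorem~\ref{thm3.1A}. First I would compute $\pounds_{\delta\bar\xi}\,g$ from \eqref{2.3b}: since $\nabla_X\bar\xi=s\beta\big(X-\sum\nolimits_j\eta^j(X)\xi_j\big)$, one gets
\[
 (\pounds_{\delta\bar\xi}\,g)(X,Y)=(X\delta)\,\bar\eta(Y)+(Y\delta)\,\bar\eta(X)+2\,\delta s\beta\Big(g(X,Y)-\sum\nolimits_i\eta^i(X)\eta^i(Y)\Big).
\]
Substituting this into \eqref{Eq-1.1}, putting $Y=\xi_k$, and using $\Ric^\sharp\xi_k=-2n\beta^2\bar\xi$ from Proposition~\ref{Lem-32}, the $\delta s\beta$-terms cancel; the resulting relation, restricted to $X\in{\cal D}$, gives $X\delta=0$, and, restricted to $X=\xi_j$, gives $\xi_i\delta=C$ for every $i$, where $C:=\lambda+\mu+2n\beta^2$. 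Hence $\nabla\delta=C\,\bar\xi$, equivalently $d\delta=C\,\bar\eta$.

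The crux is to show $C=0$. Since $d\eta^k=0$ on a weak $\beta$-Kenmotsu $f$-manifold and $\eta^k(\bar\xi)=1$, Cartan's formula $\pounds_{\bar\xi}=\iota_{\bar\xi}\,d+d\,\iota_{\bar\xi}$ gives $\pounds_{\bar\xi}\,\eta^k=0$; combined with the identity $\pounds_{\delta\bar\xi}\,\eta^k=\delta\,(\pounds_{\bar\xi}\,\eta^k)+\eta^k(\bar\xi)\,d\delta$ this yields $\pounds_{\delta\bar\xi}\,\eta^k=d\delta$ for every $k$. Also $[\delta\bar\xi,\xi_j]=\delta[\bar\xi,\xi_j]-(\xi_j\delta)\bar\xi=-C\bar\xi$, using $[\xi_i,\xi_j]=0$ (which follows from \eqref{Eq-normal-3}). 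Let $P:={\rm id}_{TM}-\sum\nolimits_k\xi_k\otimes\eta^k$ be the orthoprojector onto ${\cal D}$. Expanding $(\pounds_{\delta\bar\xi}\,P)X=[\delta\bar\xi,PX]-P([\delta\bar\xi,X])$ by the Leibniz rule and using the two facts above together with $d\delta=C\bar\eta$, one obtains
\[
 (\pounds_{\delta\bar\xi}\,P)X=C\,\bar\eta(X)\,\bar\xi-\sum\nolimits_k(\pounds_{\delta\bar\xi}\,\eta^k)(X)\,\xi_k=C\,\bar\eta(X)\,\bar\xi-(X\delta)\,\bar\xi=0 .
\]

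On the other hand, Proposition~\ref{Lem-32} gives $R_{X,\xi_j}\xi_i=-\beta^2 PX$ for all $i,j$ (and hence $R_{X,\bar\xi}\xi_i=R_{X,\xi_j}\bar\xi=-s\beta^2 PX$, and $R_{[\delta\bar\xi,X],\xi_j}\xi_i=-\beta^2 P([\delta\bar\xi,X])$). Expanding $(\pounds_{\delta\bar\xi}\,R)_{X,\xi_j}\xi_i$ by the Leibniz rule and inserting $R_{X,\xi_j}\xi_i=-\beta^2 PX$, the equalities $[\delta\bar\xi,\xi_i]=[\delta\bar\xi,\xi_j]=-C\bar\xi$ and the curvature identities just listed, the Lemma $(\pounds_{\delta\bar\xi}\,R)_{X,\xi_j}\xi_i=0$ collapses to $(\pounds_{\delta\bar\xi}\,P)X=-2sC\,PX$. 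Comparing with the previous display yields $C\,PX=0$ for all $X$; choosing $X\in{\cal D}\setminus\{0\}$ (possible since $n>0$) forces $C=0$. Therefore $\nabla\delta=0$, i.e.\ $\delta=\mathrm{const}$.

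With $\delta$ constant, $\pounds_{\delta\bar\xi}\,g=2\,\delta s\beta\big(g-\sum\nolimits_i\eta^i\otimes\eta^i\big)$, so \eqref{Eq-1.1} becomes the $\eta$-Einstein relation \eqref{Eq-2.10} with $a=\lambda-\delta s\beta$ and $b=\mu+\delta s\beta$ (indeed $a+b=\lambda+\mu$). Since the manifold is now simultaneously an $\eta$-Ricci soliton and $\eta$-Einstein, has $\dim M>3$ and $\beta=\mathrm{const}$, Theorem~\ref{thm3.1A} applies and gives $a=-2sn\beta^2$, $b=2(s-1)n\beta^2$ and $r=-2sn(2n+1)\beta^2$, which completes the argument. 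The main obstacle is the twofold evaluation of $(\pounds_{\delta\bar\xi}\,P)X$: one value comes directly from $\pounds_{\delta\bar\xi}\,\eta^k=d\delta$ and $d\delta=C\bar\eta$, the other from the Leibniz expansion of $(\pounds_{\delta\bar\xi}\,R)_{X,\xi_j}\xi_i=0$ together with the explicit formula $R_{X,\xi_j}\xi_i=-\beta^2 PX$; keeping track, in that expansion, of which structure field carries the Lie derivative is the delicate point. All other steps are routine substitutions into \eqref{Eq-1.1} and the closing appeal to Theorem~\ref{thm3.1A}.
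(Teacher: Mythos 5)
Your proof is correct, and its overall skeleton (compute $\pounds_{\delta\bar\xi}\,g$ from \eqref{2.3b}, substitute into \eqref{Eq-1.1}, show $\delta=const$, read off the $\eta$-Einstein form, close with Theorem~\ref{thm3.1A}) is the same as the paper's; the genuine difference is in how the $\xi$-derivatives of $\delta$ are killed. The paper plugs $X=Y=\xi_i$ (and $X\in{\cal D}$, $Y=\xi_i$) into the transformed soliton equation and invokes Lemma~\ref{lem3.3}, i.e.\ $\lambda+\mu=-2n\beta^2$, which gives $\xi_i(\delta)=0$ in one line. You instead keep $C:=\lambda+\mu+2n\beta^2$ as an unknown, obtain $d\delta=C\,\bar\eta$, and force $C=0$ by the double evaluation of $\pounds_{\delta\bar\xi}P$: once from $\pounds_{\delta\bar\xi}\,\eta^k=d\delta$, and once from the Leibniz expansion of the lemma $(\pounds_V R)_{X,\xi_j}\xi_i=0$ combined with $R_{X,\xi_j}\xi_i=-\beta^2PX$, which indeed follows from Proposition~\ref{Lem-32}; I checked that the expansion collapses to $(\pounds_{\delta\bar\xi}P)X=-2sC\,PX$, so $C=0$ since $\beta\ne0$ and $n>0$. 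Your route is heavier, but it buys independence from Lemma~\ref{lem3.3}: in effect you re-derive $\lambda+\mu=-2n\beta^2$ for collinear potentials from the curvature lemma, whereas citing Lemma~\ref{lem3.3} at the moment you introduce $C$ would let you delete the entire $\pounds_{\delta\bar\xi}P$ computation and reduce your argument to the paper's. A small bookkeeping remark: your coefficients $a=\lambda-s\delta\beta$, $b=\mu+s\delta\beta$ are the ones consistent with $(\pounds_{\bar\xi}\,g)=2\,s\,\beta\,(g-\sum_i\eta^i\otimes\eta^i)$, while \eqref{3.23} in the paper carries $\lambda-\delta\beta$, $\mu+\delta\beta$ (a factor-$s$ slip); since $a+b=\lambda+\mu$ either way, Theorem~\ref{thm3.1A} gives the same final values of $a$, $b$ and $r$, so the discrepancy is immaterial to the statement.
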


\begin{proof} Using \eqref{2.3-f-beta} in the derivative of $V=\delta\,\bar\xi$,
yields
\[
 \nabla_X V = X(\delta)\,\bar\xi +\delta\,\beta(X-\sum\nolimits_{\,j}\eta^j(X)\,\xi_j),\quad X\in\mathfrak{X}_M.
\]
Using this and calculations
\begin{align*}
& (\pounds_{\delta\,\bar\xi}\,g)(X,Y)=\delta(\pounds_{\bar\xi}\,g)(X,Y) +X(\delta)\,\bar\eta(Y) +Y(\delta)\,\bar\eta(X),\\
& (\pounds_{\bar\xi}\,g)(X,Y)=2\,s\,\beta\{g(X,Y) -\sum\nolimits_{\,j}\eta^j(X)\,\eta^j(Y)\},
\end{align*}
we transform the $\eta$-Ricci soliton equation \eqref{Eq-1.1} into
\begin{align}\label{3.23}
\notag
 & 2\,{\rm Ric}(X,Y) = -X(\delta)\,\bar\eta(Y) -Y(\delta)\,\bar\eta(X) +2(\lambda-\delta\beta)\,g(X,Y) \\
 & +2(\delta\beta+\mu)\,\sum\nolimits_{\,j}\eta^j(X)\,\eta^j(Y)
 -4\,n\beta^2\,\sum\nolimits_{\,i\ne j}\eta^i(X)\,\eta^j(Y),\quad X,Y\in\mathfrak{X}_M.
\end{align}
Inserting $X=Y=\xi_i$ in \eqref{3.23} and using Proposition~\ref{Lem-32}
and $\lambda+\mu=-2\,n\,\beta^2$, see Lemma~\ref{lem3.3}, we get $\xi_i(\delta)=0$.
It~follows from \eqref{3.23} and Proposition~\ref{Lem-32} that $X(\delta)=0\ (X\in{\cal D})$. Thus $\delta$ is constant on $M$, and \eqref{3.23}~reads
\begin{align*}
 {\rm Ric}= (\lambda-\delta\beta)\,g +(\delta\beta+\mu)\sum\nolimits_{\,j}\eta^j\otimes\eta^j -2\,n\beta^2\,\sum\nolimits_{\,i\ne j}\eta^i\otimes\eta^j.
\end{align*}
This shows that $(M,g)$ is an $\eta$-Einstein manifold with $a=\lambda-\delta\beta$ and $b=\mu+\delta\beta$ in \eqref{Eq-2.10}.
Therefore, from Theorem~\ref{thm3.1A} we conclude that $\lambda=\delta\beta-2\,s\,n\beta^2$, $\mu=-\delta\beta+2(s-1)n\beta^2$,
and the scalar curvature of $(M,g)$ is $r=-2\,s\,n(2\,n+1)\beta^2$.
\end{proof}

\section{Conclusions and future directions}

This review paper demonstrates that the weak metric $f$-structure is a valuable tool for exploring various geometric properties on manifolds,
including Killing vector fields, totally geodesic foliations, twisted products, Ricci-type solitons, and Einstein-type metrics.
Several results for metric $f$-manifolds have been extended to manifolds with weak structures, providing new
applications.

In conclusion, we pose several open questions:
1. {Is the condition ``the mixed sectional curvature is positive" sufficient for a weak metric $f$-manifold to be weak $f$-{K}-contact}?
2. {Does a weak metric $f$-manifold of dimension greater than $3$ have some positive mixed sectional curvature}?
3. {Is a compact weak $f$-{K}-contact Einstein manifold an ${\cal S}$-manifold}?
4. {When is a given weak $f$-{K}-contact manifold a mapping torus (see \cite{Goertsches-2}) of a manifold of lower dimension}?
5. {When does a weak metric $f$-manifold equipped with a Ricci-type soliton structure,
carry a canonical (e.g.,
of constant sectional curvature or
Einstein-type) metric}?
6. Can Theorems 13--16 be extended to the case where $\beta$ is not constant?
These questions highlight the potential for further exploration and development in the field of weak metric $f$-manifolds, encouraging continued research and discovery.

We delegate to the future:

\noindent\quad
$\bullet$ the study of integral formulas, variational problems and extrinsic geometric flows for weak metric $f$-manifolds and their distinguished classes using the methodology of \cite{Rov-Wa-2021}.

\noindent\quad
$\bullet$ the study of weak nearly ${\cal S}$- and weak nearly ${\cal C}$- manifolds as well as weak nearly Kenmotsu $f$-manifolds, and the generalization to the case $s>1$ of my results on weak nearly Sasakian/cosymplec\-tic manifolds,~see a survey~\cite{rst-55}.

\noindent\quad
$\bullet$  the study of geometric inequalities
with the mutual curvature invariants and with Chen-type invariants
(and the case of equality in them) for submanifolds in weak metric $f$-manifolds
and in their distinguished classes using the methodology of~\cite{rov-112}.


%



\end{document}